\theoremstyle{plain}
\newtheorem{thm}{Theorem}[section]
\newtheorem{lem}[thm]{Lemma}
\newtheorem{cor}[thm]{Corollary}
\newtheorem{prop}[thm]{Proposition}
\theoremstyle{definition}
\newtheorem{rem}[thm]{Remark}
\newtheorem{defn}[thm]{Definition}
\def\Q{\tilde{Q}}
\def\tP{\tilde{\P}}
\def\pT{\frac{\p}{\p T}}
\def\tD{{\tilde{\Delta}}}
\def\tl{{\tilde{\l}}}
\def\H{\mathcal{H}}
\def\lan{\langle}
\def\ran{\rangle}
\def\Re{\mathrm{Re}}
\def\la{\mathrm{la}}
\def\mL{\mathrm{L}}
\def\cE{{\mathcal{E}}}
\def\mS{\mathrm{S}}
\def\sm{\mathrm{sm}}
\def\End{\mathrm{End}}
\def\bE{{\bar{E}}}
\def\rank{\mathrm{rank}}
\def\tf{\tilde{f}}
\def\im{\mathrm{im}}
\def\rel{\mathrm{rel}}
\def\bd{\mathrm{bd}}
\def\lan{\langle}
\def\ran{\rangle}
\def\abs{\mathrm{abs}}
\def\Tr{\operatorname{Tr}}
\def\bz{{\bar{Z}}}
\def\bm{\bar{M}}
\def\Z{\mathbb{Z}}
\def\P{\mathcal{P}}
\def\dvol{\mathrm{dvol}}
\def\l{\lambda}
\def\a{\alpha}
\def\b{\beta}
\def\R{\mathbb{R}}
\def\F{\bar{F}}
\def\C{\mathbb{C}}
\def\de{\delta}
\def\half{\frac{1}{2}}
\def\tC{{\tilde{C}}}
\def\ttD{{\tilde{D}}}
\def\T{\mathcal{T}}
\begin{document}
	
	\def\o{\omega}
	\def\O{\Omega}
	\def\p{\partial}
	\def\s{{\theta}}
	\def\a{\alpha}
	\def\half{\frac{1}{2}}
	\def\l{\lambda}
	\def\dist{\mathrm{dist}}
	\def\Zs{{Z_\s}}
	\def\Zso{{Z_{\s,1}}}
	\def\Zsw{{Z_{\s,2}}}
	\def\bzs{{\bz_\s}}
	\def\bzso{{\bz_{\s,1}}}
	\def\bzsw{{\bz_{\s,2}}}
	
	\newcommand{\be}{\begin{equation}}
		\newcommand{\ee}{\end{equation}}
	\newcommand{\bc}{\begin{cases}}
		\newcommand{\ec}{\end{cases}}
	\newcommand{\bes}{\begin{equation*}}
		\newcommand{\ees}{\end{equation*}}
	\newcommand{\ba}{\begin{align}}
		\newcommand{\ea}{\end{align}}
	\newcommand{\bas}{\begin{align*}}
		\newcommand{\eas}{\end{align*}}
	\newcommand{\es}{\end{split}}
\newcommand{\bs}{\begin{split}}
	\title{A new proof of gluing formula for analytic torsion forms}
	
	
	\author{Junrong Yan
		\footnote{Beijing International Center for Mathematical Research, Peking University, Beijing, China 100871, j\_yan@bicmr.pku.edu.cn. Supported by Boya Postdoctoral Fellowship at Peking University.}
	}
	
	\maketitle
	\begin{abstract}{
			By extending the author's prior work  \cite{Yantorsions}  to the family case, this paper presents a new proof of the gluing formula for analytic torsion forms, considerably simplifying the proof given by Puchol-Zhang-Zhu \cite{puchol2020adiabatic}. 
			It is expected that this new proof will enhance the exploration of the higher Cheeger-M\"uller theorem
			for general flat bundles.
		}
	\end{abstract}
	
	\tableofcontents
	
	\section{Introduction}

	\subsection{Overview}
	Let $(M,g)$ be a closed Riemannian manifold associated with a flat complex vector bundle $F\to M$ with a Hermitian metric $h^F$.
	The corresponding Ray-Singer analytic torsion \cite{ray1971r} is the determinant of the Hodge-Laplacian on $F$-valued differential forms. 
	The Ray-Singer analytic has a well-known topological counterpart, the Reidemeister torsion (R-torsion)\cite{reidemeister1935homotopieringe}. According to the well-known Cheeger-M\"uller/Bismut-Zhang theorem, the two torsions are related \cite{cheeger1979analytic,muller1978analytic,muller1993analytic,bismutzhang1992cm}.  
	
	It was conjectured that R-torsion and Ray-Singer torsion can be extended to invariants of a $C^\infty$ fibration  $\pi: M \rightarrow S$ of closed fiber $Z$, associated with a flat complex vector bundle $F\to M$ \cite{wagoner1976diffeomorphisms}.  Bismut and Lott \cite{bismut1995flat} then construct analytic torsion forms (BL-torsion), which are even forms on $S$. Igusa, motivated by the work of Bismut and Lott, developed the Igusa-Klein torsion, a higher topological torsion (IK-torsion)  \cite{igusa2002higher}. As an application of IK-torsion,  Goette, Igusa, and Williams \cite{goette2014exotic1,goette2014exotic} uncover fiber bundles' exotic smooth structure. Then it becomes a natural and significant question to investigate the connection between these higher torsion invariants. Under the assumption that there exists a fiberwise Morse function \cite{bismut2001families, goette2001morse}, Bismut and Goette established a higher version of the Cheeger-M\"uller/Bismut-Zhang theorem. Lastly, an interesting relation between the Bismut-Freed connection and analytic torsion forms was observed by Dai and Zhang \cite{DaiZhang}.

	Higher torsion invariants were axiomatized by Igusa \cite{igusa2008axioms}, and Igusa showed that IK-torsion complies with his axioms. Two of Igusa's axioms are the additivity axiom and the transfer axiom. And any higher torsion invariant that satisfies Igusa's axioms is simply a linear combination of IK-torsion and the higher Miller-Morita-Mumford class \cite{morita1987characteristic,mumford1983towards, miller1986homology}. BL-torsion is proven to satisfy the transfer axiom thanks to the work of Ma \cite{ma1999functoriality}. 
	At a conference\footnote{Smooth Fibre Bundles and Higher Torsion Invariants, http://www.uni-math.gwdg.de/wm03/,
		G\"ottingen, 2003} on higher torsion invariants in 2003, the gluing formula of analytic torsion forms was posed as an open problem. A gluing formula for analytic torsion forms was recently established by Puchol-Zhang-Zhu in \cite{puchol2020adiabatic}. Using \cite{ma1999functoriality,puchol2020adiabatic} and Igusa's axiom of higher torsion invariants, Puchol-Zhang-Zhu were able to prove the higher Cheeger-M\"uller theorem for trivial bundles in \cite{puchol2021comparison}. This paper presents a different and considerably less complicated proof of the gluing formula in the hope that this will offer insight into the study of higher Cheeger-M\"uller theorem for general flat bundles.
	
	Let's assume that $N\subset M$ is a hypersurface that fiberwisely divides $Z$ into two parts, $Z_1$ and $Z_2$, and that it also divides $M$ into two pieces, $M_1$ and $M_2$. In this paper, we prove a gluing formula using the Witten deformation for non-Morse functions. We choose a family of smooth functions $p_T$ where $\lim_{T\to\infty}p_T$ contains critical loci $M_1$ and $M_2$ with ``Morse indices" $0$ and $1$ respectively. Assuming $T$ varies from $0$ to $\infty$, we may adopt the philosophy of Witten deformation to see the relationship between the analytic torsion forms on $M$ and analytic torsion forms on the two components above. 
	
	Lastly, it is worth mentioning that Puchol-Zhang-Zhu in \cite{puchol2020adiabatic} constructed a similar family of non-Morse functions to ensure that their two parametrized Laplacian operators have a uniform spectral gap near 0 when the adiabatic limit is applied.
	\ \\
	
	{\em Acknowledgment:  
		The author is appreciative of Professor Xianzhe Dai's consistently stimulating conversation and encouragement. The author also appreciates the insightful discussion with Martin Puchol and Yeping Zhang.}
	
	\subsection{Main results}
	Let $M\to S$ be a fibration of closed fiber $Z$.  Let $T Z\subset TM$ be the vertical tangent bundle of the fiber bundle with metric $g^{TZ}$. Let $F\to M$ be a flat complex vector bundle with Hermitian metric $h^F$, and $\nabla^F$ be a flat connection on $F$. Fix a splitting of $TM$
	\[T M=T^H M \oplus T Z.\]
	
	Let $Q^{\mS}$ be the space of closed even forms, $Q^{\mS}_0$ be the space of exact even forms.
	Based on the data $(T^HM,g^{TZ},h^F)$, one can define analytic torsion forms $$\T(T^HM,g^{TZ},h^F)\in Q^{\mS}.$$
	See Definition \ref{torc} for the definition of $\T(T^HM,g^{TZ},h^F).$

	Let $N \subseteq M$ be a hypersurface transversal to $Z$. We suppose that $\pi|_N: N \rightarrow S$ is a fibration of fiber $Y:=N \cap Z$. Suppose that $N$ cuts $M$ into two pieces $M_1$ and $M_2$, and fiberwisely, cut $Z$ into two pieces $Z_1$ and $Z_2$. Let $\pi_i: M_i \rightarrow S$ be the restriction of $\pi$ to $M_i$. Then $\pi_i$ is a fibration of $Z_i$ ($i=1,2$). Let $F_i, T^HM_i$ and $h^{F_i}$ be the restriction of $F, T^HM$ and $h^{F}$ to $M_i$ respectively ($i=1,2$).
	
	We put the relative boundary condition on the boundary of $Z_1$ and the absolute boundary condition on the boundary of $Z_2$ (see \cref{defnblbd}). The analytic torsion form for fibration $M_i$ with boundary equipped with absolute/relative boundary condition could be defined, denoted by $\T_i(T^HM_i,g^{TZ_i},h^{F_i})\in Q^{\mS}$ ($i=1,2$). See \cref{defnblbd} for more details.
	
	For $\s\in S$, let $Z_\s:=\pi^{-1}\s, F_\s:=F|_{Z_\s}.$ Let $H\left(Z, F\right)$ be the $\mathbb{Z}$-graded vector bundle over $S$ whose fiber over $\s \in S$ is the cohomology $H\left(Z_\s, F_{\s}\right)$ of the sheaf of locally flat sections of $F$ on $Z_\s$.
	
	We have a Mayer-Vietoris exact sequence of flat complex vector bundles over $S$, 
	
	\be\label{exactse}\cdots \rightarrow H^k_{\rel}\left(Z_2;F_2\right) \rightarrow H^k\left(Z; F\right) \rightarrow H^k_{\abs}\left(Z_1;F_1\right) \rightarrow \cdots\ee
	with metric and flat connections induced by Hodge theory (c.f. \cite{bismut1995flat}).
	
	Let $\T\in Q^{\mS} $ be the torsion form for the exact sequence (\ref{exactse})(c.f. \cref{defnblfn}). We assume that $g^{TZ},h^{F}$ and $T^HM$ are product-type near $N$, then
	\begin{thm}\label{main0}
		In $Q^{\mS} / Q^{S}_0$,
		\begin{align*}
			&\ \ \ \ \mathcal{T}(T^HM,g^{TZ},h^F)-\T_1(T^{H}M_1,g^{TZ_1},h^{F_1})-\T_2(T^HM_2,g^{TZ_2},h^{F_2})-\mathcal{T}\\
			&=\frac{1}{2} \chi(Y) \rank(F) \log 2.\end{align*}
	\end{thm}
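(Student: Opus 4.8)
\medskip

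The plan is to run a Witten-deformation argument at the level of the Bismut--Lott superconnection, interpolating between the analytic torsion form on $M$ and the pair of torsion forms on $M_1, M_2$ through a family $p_T$ of smooth functions whose limiting critical set consists of $M_1$ (Morse index $0$) and $M_2$ (Morse index $1$). First I would introduce the deformed fiberwise de Rham complex $(\Omega^\bullet(Z,F), d_T = e^{-Tp_T} d\, e^{Tp_T})$ together with the corresponding Bismut superconnection $\mathbb{A}_T$ and its number operator, and form the associated torsion form $\mathcal{T}_T := \mathcal{T}(T^HM, g^{TZ}, h^F, p_T)$. The key point is that $\mathcal{T}_T$ depends on $T$ only through an anomaly term, so that $\mathcal{T}_T - \mathcal{T}_0$ is computable in $Q^{\mS}/Q^{\mS}_0$ by a transgression argument (an application of the variation formula for torsion forms along the path $T \mapsto p_T$, together with the finite-dimensional torsion of the relevant cohomology complexes). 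This reduces the theorem to identifying $\lim_{T\to\infty}\mathcal{T}_T$.

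\medskip

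The heart of the argument is the large-$T$ analysis. As $T\to\infty$, the spectrum of the deformed Hodge Laplacian splits into a \emph{small} part, whose eigenforms concentrate near the critical manifolds $M_1$ and $M_2$, and a \emph{large} part, whose contribution to the torsion form converges to a local quantity. On the small part, the model operator near $M_i$ is a fiberwise harmonic oscillator in the normal (collar) direction tensored with the de Rham complex of $Z_i$ with the appropriate boundary condition: index $0$ at $M_1$ forces the relative (Dirichlet-type) condition giving $H^\bullet_{\rel}(Z_1;F_1)$ after the natural shift, and index $1$ at $M_2$ forces the absolute condition giving $H^\bullet_{\abs}(Z_2;F_2)$. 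Thus the small-eigenvalue complex is, up to a degree shift, the total complex of $H^\bullet_{\rel}(Z_2;F_2) \oplus H^\bullet_{\abs}(Z_1;F_1)$, and the connecting maps of the Mayer--Vietoris sequence \eqref{exactse} appear as the next-order term in the $T\to\infty$ expansion. I would package this as: (i) a Witten-type estimate comparing the true small-spectrum superconnection with the model superconnection on the collar, uniformly in the base parameter; (ii) identification of the limiting finite-dimensional torsion form with $\mathcal{T}_1 + \mathcal{T}_2 + \mathcal{T}$ modulo exact forms; and (iii) control of the large-spectrum part, showing that in the limit it contributes only the explicit local term.

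\medskip

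The explicit constant $\tfrac{1}{2}\chi(Y)\rank(F)\log 2$ is the residual discrepancy between the two normalizations of the harmonic-oscillator ground state across the cut $N$: each of the $\rank(F)$ copies of the normal oscillator over the fiber $Y$ contributes a $\tfrac12\log 2$ defect in the metric on the determinant line (this is exactly the mechanism by which the analogous constant arises in the classical, non-family gluing formula, e.g. in \cite{Yantorsions}), and integrating the Euler form of $Y$ against the relevant characteristic data produces the factor $\chi(Y)$. Here the product-type assumption on $g^{TZ}, h^F, T^HM$ near $N$ is essential: it makes the collar model exact, so that no additional curvature corrections enter, and it guarantees that the $T$-dependence of the anomaly term in the first paragraph is given by a closed form that vanishes in $Q^{\mS}/Q^{\mS}_0$.

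\medskip

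\textbf{Main obstacle.} The hard part will be step (i)--(ii) of the large-$T$ analysis in the \emph{family} setting: one must establish the Witten estimates and the convergence of the torsion form (not just of heat traces) with uniform control over the base $S$ and over all form-degrees on $S$, since the torsion form is a sum of contributions from $\mathbb{A}_T$ and its derivatives in the base directions. Controlling the off-diagonal (base-direction) components of the superconnection under the rescaling that resolves the collar, and showing that the rescaled superconnection converges to the model in a strong enough sense to pass to the limit inside the (regularized) supertrace defining $\mathcal{T}_T$, is the technical core; this is precisely where the simplification over \cite{puchol2020adiabatic} should come from, by replacing their adiabatic-limit bookkeeping with the single Witten deformation $p_T$.
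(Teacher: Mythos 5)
Your high-level strategy---Witten-deform via $p_T$ with critical locus $M_1$ (index $0$) and $M_2$ (index $1$), split the spectrum into a small part concentrating near the critical manifolds and a large part converging locally, identify the small-spectrum contribution with a finite-dimensional torsion tied to the Mayer--Vietoris sequence, and kill $T$-dependence by an anomaly argument---is the route the paper actually takes. However, there are three places where the proposal is either internally inconsistent or glosses over what turns out to be the genuine technical content.

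First, your boundary-condition assignment is self-contradictory: in one sentence you assign the relative condition to $Z_1$ and the absolute condition to $Z_2$, and in the next you form the total complex of $H^\bullet_{\rel}(Z_2;F_2)\oplus H^\bullet_{\abs}(Z_1;F_1)$. The convention actually used in the paper (matching the Mayer--Vietoris sequence \eqref{exactse} and the definitions in \cref{defnblbd}) is absolute on $Z_1$ and relative on $Z_2$; with $p_T\equiv -T/2$ on $M_1$, eigenforms concentrating near $M_1$ have no $ds$-factor and hence satisfy the absolute condition, with no explicit degree shift. Second, $\mathcal{T}_T$ alone \emph{diverges linearly in $T$} (this is visible in Theorem~\ref{main}: the small-time, small-eigenvalue contribution on $\bm$ carries a $-(T-\log 2)\chi(Y)\rank(F)/2$ term); ``identifying $\lim_{T\to\infty}\mathcal{T}_T$'' is therefore not well posed. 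The paper instead forms the combination $\T(\bm)(T)-\T_1(\bm_1)(T)-\T_2(\bm_2)(T)-\mathcal{T}(T)$, all evaluated at the \emph{same} $T$, shows via the anomaly formula (Theorem~\ref{int1p}) that this combination is $T$-independent in $Q^{\mS}/Q^{\mS}_0$, and then computes its $T\to\infty$ limit; the linear divergences cancel only in this combination. Your phrase ``together with the finite-dimensional torsion of the relevant cohomology complexes'' hints at this, but the reduction you state is to the wrong limit. Third, and most seriously: the small-eigenvalue subbundle $\tO_{\sm}(\bz,\F)(T)$ is \emph{not flat}---its projected connection $\nabla^{\delta,T}$ is not a flat superconnection---so ``the limiting finite-dimensional torsion form'' does not come for free from Hodge theory. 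The paper has to construct a second, genuinely flat connection $\nabla^{\H,T}$ via the Hodge decompositions on $\bz_1,\bz_2$, prove an exponential smallness estimate $\|\nabla^{\delta,T}-\nabla^{\H,T}\|\le Ce^{-\rho T}$ (Lemma~\ref{last60}), and combine this with the exponential smallness $\sim e^{-2T}$ of the nonzero small eigenvalues (Corollary~\ref{last40}, which itself needs the monotonicity estimate on $g_T$ in Lemma~\ref{last30}) to control the resulting torsion difference over the full range of the $t$-integral (Corollary~\ref{last70}). This comparison of two superconnections on a non-flat finite-rank bundle is the core of Section~\ref{lastreal} and is absent from your sketch; without it, steps (ii)--(iii) of your plan do not go through.

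Smaller remarks: the paper does not deform $(M,g^{TZ},h^F)$ in place but first glues $\bm=M_1\cup([-1,1]\times N)\cup M_2$ with $p_T$ constant $\mp T/2$ on the two caps, so that the critical manifolds are \emph{exactly} $M_1$ and $M_2$---this is what makes Propositions~\ref{last1p}--\ref{last2p} and the Agmon estimates clean; and the uniform-in-$\s$ control you correctly flag as the hard point is handled not by rescaling the superconnection near the collar but by the coupling bounds of Lemma~\ref{fhat1} (giving dominated convergence on $t\ge 1$) together with Lemma~\ref{est1} from \cite{Yantorsions}. Your identification of the origin of the $\tfrac12\chi(Y)\rank(F)\log 2$ constant as the Gaussian normalization across the cut is correct in spirit and is exactly what the supertrace computations \eqref{heatbd2}, \eqref{heatm2} encode.
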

	For any closed oriented submanifold $S' \subseteq S$, the map
	$$
	\int_{S'}: Q^{\mS} \rightarrow \mathbb{R}
	$$
	descends to a linear function on $Q^{\mS} / Q^{S}_0$. It follows from Stokes' formula and the de Rham theorem that these linear functions separate the elements of $Q^{\mS} / Q^{S, 0}$. To prove Theorem \ref{main0}, it is, therefore, sufficient to show the case in which $S$ is a closed manifold.

	\subsection{Main ideas}\label{idea}
	
	Let $N\subset M$ be a hypersurface cutting $M$ into two pieces $M_1$ and $M_2$ (see Figure \ref{fig1}). Let $U$ be a neighbor of $N$, such that $U\cap M_1$ is diffeomorphic to $(-2,-1]\times N$ and identify $\p M_1$ with $\{-1\}\times N$, $U\cap M_2$ is diffeomorphic to $[1,2)\times N$ and identify $\p M_2$ with $\{1\}\times N$. Moreover, assume that on $U$, $ T^HM, g^{TZ}$ and $h^F$ are product-type.
	
	\begin{figure}[h]
		\setlength{\unitlength}{0.75cm}
		\centering
		\begin{picture}(18,6.5)
			\qbezier(3,2)(5,1)(7,2) 
			\qbezier(3,4)(5,5)(7,4) 
			\qbezier(3,2)(1.5,3)(3,4) 
			\qbezier(4,3.2)(5,2.7)(6,3.2) 
			\qbezier(4.5,3)(5,3.2)(5.5,3) 
			\qbezier(7,2)(7.5,2.3)(8,2.4) 
			\qbezier(7,4)(7.5,3.7)(8,3.6) 
			\qbezier(11,2)(13,1)(15,2) 
			\qbezier(11,4)(13,5)(15,4) 
			\qbezier(15,2)(16.5,3)(15,4) 
			\qbezier(12,3.2)(13,2.7)(14,3.2) 
			\qbezier(12.5,3)(13,3.2)(13.5,3) 
			\qbezier(11,2)(10.5,2.3)(10,2.4) 
			\qbezier(11,4)(10.5,3.7)(10,3.6) 
			\qbezier(8,2.4)(9,2.4)(10,2.4) 
			\qbezier(8,3.6)(9,3.6)(10,3.6) 
			
			\qbezier(9,2.4)(8.7,3)(9,3.6) 
			\qbezier[30](9,2.4)(9.3,3)(9,3.6) 
			
			\put(9,5.3){\vector(0,-1){1.5}} 
			
			\put(3,4.8){$\overbrace{\hspace{40mm}}^{}$} \put(5.1,5.4){$M_{1}$}  
			\put(9.7,4.8){$\overbrace{\hspace{40mm}}^{}$} \put(11.9,5.4){$M_{2}$}  
			\put(3.5,1.2){$\underbrace{\hspace{83mm}}_{}$} \put(8.8,0.3){$M$}  
			\put(9,5.5){$N$}
		\end{picture}
		\caption{}
		\label{fig1}
	\end{figure}

	We then glue $M_1$, $M_2$ and $[-1,1]\times N$ naturally (see Figure \ref{fig2}), we get a new fiberation $\bm\to S$, which is isomorphic to the original fiberation $M\to S.$ Let $T^H\bm,g^{T\bz}$ and $h^{\F}$ be the natural extension of $T^HM,g^{TZ}$ and $h^F$ to $\bm$.
	
	\begin{figure}[h]
		\setlength{\unitlength}{0.75cm}
		\centering
		\begin{picture}(18,6.5)
			\qbezier(1,2)(3,1)(5,2) 
			\qbezier(1,4)(3,5)(5,4) 
			\qbezier(1,2)(-0.5,3)(1,4) 
			\qbezier(2,3.2)(3,2.7)(4,3.2) 
			\qbezier(2.5,3)(3,3.2)(3.5,3) 
			\qbezier(5,2)(5.5,2.3)(6,2.4) 
			\qbezier(5,4)(5.5,3.7)(6,3.6) 
			\qbezier(13,2)(15,1)(17,2) 
			\qbezier(13,4)(15,5)(17,4) 
			\qbezier(17,2)(18.5,3)(17,4) 
			\qbezier(14,3.2)(15,2.7)(16,3.2) 
			\qbezier(14.5,3)(15,3.2)(15.5,3) 
			\qbezier(13,2)(12.5,2.3)(12,2.4) 
			\qbezier(13,4)(12.5,3.7)(12,3.6) 
			\qbezier(6,2.4)(9,2.4)(12,2.4) 
			\qbezier(6,3.6)(9,3.6)(12,3.6) 
			\qbezier(7,2.4)(6.7,3)(7,3.6) 
			\qbezier[30](7,2.4)(7.3,3)(7,3.6) 
			\qbezier(11,2.4)(10.7,3)(11,3.6) 
			\qbezier[30](11,2.4)(11.3,3)(11,3.6) 
			
			
			\put(2.8,2){$M_1$} 
			\put(14.8,2){$M_2$} 
			\put(6.2,2){$\underbrace{\hspace{43mm}}^{}$} \put(8,1){$[-2,2]\times N$}  
			\put(1,5.0){$\overbrace{\hspace{59mm}}^{}$} \put(4.1,5.5){$\bm_1$}  
			\put(7.2,4){$\overbrace{\hspace{27mm}}^{}$} \put(8,4.5){$[-1,1]\times N$}  
			\put(9.2,5.0){$\overbrace{\hspace{58mm}}^{}$} \put(12.9,5.5){$\bm_2$}  
			\put(1.5,0.5){$\underbrace{\hspace{113mm}}_{}$} \put(8.8,-0.5){$\bm$}  
		\end{picture}
		\caption{}
		\label{fig2}
	\end{figure}

	Let $p_T$ be a smooth function on $\bm$ (see Figure \ref{fig3}), such that
	\begin{enumerate}
		\item $p_T|_{M_1}\equiv-T/2,$
		\item $p_T|_{M_2}\equiv T/2$,
		\item $p_T|_{[-1,0]\times Y}(s,y)\approx T(s+1)^2/2-T/2$,
		\item $p_T|_{[0,1]\times Y}(s,y)\approx -T(s-1)^2/2+T/2$.
	\end{enumerate}

	\begin{figure}[h]
		\begin{center}
			\begin{tikzpicture}
				\draw[->,thick] (-4,0) -- (4,0);
				\draw[->,thick] (0,-1.5) -- (0,1.5);
				\draw[domain=-4:-1] plot (\x,{-1});
				\draw[domain=-1:0] plot (\x,{(\x+1)*(\x+1)-1});
				\draw[domain=0:1] plot (\x,{-(\x-1)*(\x-1)+1});
				\draw[domain=1:4] plot (\x,{1});
				\draw [dashed] plot coordinates {(-1,-1)(0,-1)};
				\draw [dashed] plot coordinates {(1,1)(0,1)};
				\node at (0.5,-1) {$-T/2$};
				\node at (-0.5,1) {$T/2$};
				\draw [dashed] plot coordinates {(-1,1.4)(-1,-1.4)};
				\draw [dashed] plot coordinates {(1,1.4)(1,-1.4)};
				\node at (-2.5,0.6) {$M_1$};
				\node at (2.5,0.6) {$M_2$};
				\fill [black] (-1,0) circle (2pt);
				\fill [black] (1,0) circle (2pt);
				\node at (-1.3,0.3){$-1$};
				\node at (1.2,0.3){$1$};
			\end{tikzpicture}
		\end{center}
		\caption{}
		\label{fig3}
	\end{figure}

	For simplicity, we solely describe the situation where $S=\{pt\}$. In this case, the analytic torsion form reduces to the logarithm of analytic torsion.  
	
	Let $d_T:=d^{\F}+dp_T\wedge$, $d_T^{*}$ be the formal adjoint of $d_T$. Set $D_T:=d_T+d_T^{*}$, $\Delta_T:=D_T^2.$ 
	
	Let $\l_k$ be the $k$-th eigenvalue (counted with multiplicities) of $\Delta_1\oplus\Delta_2$ (acting on $\Omega_{\mathrm{rel}}(M_1;F_1)\oplus\Omega_{\mathrm{abs}}(M_2;F_2)$) . Let $\{\l_k(T)\}$ be the $k$-th eigenvalue (counted with multiplicities) of $\Delta_T$ (acting on $\Omega(\bm;\F)$).
	
	One has a nice observation that \be\label{ob1}\lim_{T\to\infty}\l_k(T)=\l_k.\ee
	
	We temporarily assume that $\dim H^k(M)=\dim H^k(M_1)+\dim H^k(M_2,\p M_2)$. Let 
	$$\mathcal{T}(T^H\bm,g^{T\bz},h^{\F})(T)$$ be the analytic torsion with respect to $\Delta_T$. Then based on (\ref{ob1}), naively, one should expect that 
	\be\label{ideq}\lim_{T\to\infty} \mathcal{T}(T^H\bm,g^{T\bz},h^{\F})(T)= \mathcal{T}_{1}(T^HM_1,g^{TZ_1},h^{F_1})+ \mathcal{T}_{2}(T^HM_2,g^{TZ_2},h^{F_2}),\ee and $$\lim_{T\to 0}\mathcal{T}(T^H\bm,g^{T\bz},h^{\F})(T)=\mathcal{T}(T^H\bm,g^{T\bz},h^{\F}).$$ As a result, the relationship between analytic torsion on $M$ and analytic torsion on the two pieces above could be seen, proving Theorem \ref{main0}. 
	
	Now, we no longer assume that  $\dim H^k(M)=\dim H^k(M_1)+\dim H^k(M_2,\p M_2)$. Note that for large $T$, the bundle $\Omega_{\sm}(M,F)(T)$ generated by the eigenforms of $\Delta_T$ with respect to small eigenvalues is of finite rank. The second key ingredient in our proof is the relationship between the analytic torsion form for the exact sequence (\ref{exactse}) and $\Omega _{\sm}(M,F)(T)$ as $T\to\infty$. See \cref{lastreal} for more details.

	\subsection{Organization}
	In \cref{section1}, we will give a brief review of analytic torsion forms and establish the basic settings. In \cref{intr}, we state and prove several intermediate results, then prove Theorem \ref{main0}. While Theorem \ref{eigencon}, \ref{limitfhat}, \ref{larcon}, \ref{main} and \ref{int6p} will be proved in subsequent sections. In \cref{eigen}, we investigate the behavior of eigenvalues as $T\to\infty$ and prove Theorem  \ref{eigencon}, \ref{limitfhat} and \ref{larcon}.  In \cref{cons}, we establish Theorem \ref{main}. Theorem \ref{int6p} is proved in \cref{lastreal}.

	\section{Preliminary}\label{section1}
	Let $\pi: M\rightarrow S$ be a fibration of $C^{\infty}$ manifolds with closed fibre $Z$. Let $T Z\subset TM$ be the vertical tangent bundle of the fiber bundle. Let $F$ be a flat complex vector bundle on $M$ with Hermitian metric $h^F$ and a flat connection $\nabla^F$. For $\s\in S,$ $Z_\s:=\pi^{-1}(\s)$ and $F_\s:=F|_{Z_\s}.$

	From now on, we assume that $S$ is closed, and $T^HM,g^{TZ}$ and $h^F$ are product-type near $N$. That is, there exists a neighborhood $U$ of $N$, such that $U= (-1,1)\times N$, and let $(s,z)$ be its coordinate. Then $T^HU=\{0\}\times T^HN$ for some splitting $TN=T^HN\oplus TY$, $g^{TZ}|_{U}=ds\otimes ds+g^{TY}$ for some metric $g^{TY}.$ Let $h^F_N:=h^F|_{\{0\}\times N}$. For any $v\in F_{(s,z)}$, let $P_{\gamma}\in \End(F_{(s,z)},F_{(0,z)})$ be the parallel transport associated with $\nabla^F$ w.r.t. the path $\gamma(t)=(st,z),t\in[0,1]$, then we require that $h^F(v,v)=h^F_N(P_\gamma v,P_\gamma v).$ Thus, $\nabla_{\frac{\p}{\p s}}^Fh^F=0$ in $U.$
	
	If $T$ is sufficiently large, all constants appearing in this paper are at least independent of $T$ and $\s$. The notations $C$ and $c$, et cetera, denote constants that may vary based on context.

	\subsection{Definition of Bismut-Lott analytic torsion forms}\label{defnbl}
	Let $T^H M$ be a sub-bundle of $T M$ such that
	$$
	T M=T^H M \oplus T Z .
	$$
	Let $P^{T Z}$ denote the projection from $T M$ to $T Z$ w.r.t. the decomposition above. If $U \in T S$, let $U^H$ be the lift of $U$ in $T^H M$, s.t. $\pi_* U^H=U$.
	
	For a Hilbert bundle $H\to X$, $\Gamma(X,H)$ denotes the space of smooth sections. 
	
	Now let $E^i=\Omega^i(Z,F)$, which is the bundle over $S$, such that for $\s\in S$, $\Omega^i(Z,F)|_{\Zs}=\Omega^i(\Zs,F_\s),$ that is, $E=\oplus_{i=0}^{\operatorname{dim} Z} E^i$  is the smooth infinite-dimensional $\Z$-graded vector bundle over $S$, whose fiber at $\s \in S$ is $\Gamma\left(Z_\s,\left(\Lambda\left(T^* Z\right) \otimes F\right)|_{Z_\s}\right)$. Then
	$$
	\Gamma\left(S ,\Omega^i(Z,F)\right)=\Gamma\left(M, \Lambda^i\left(T^* Z\right) \otimes F\right).
	$$
	
	For $s \in \Gamma(S ; E)$ and $U\in \Gamma(S,TS)$, the Lie differential $L_{U^H}$ acts on $\Gamma(S, E)$. Set
	$$
	\nabla_U^E s=L_{U^H} s .
	$$
	Then $\nabla^E$ is a connection on $E$ preserving the $\mathbb{Z}$-grading.
	
	If $U_1, U_2$ are vector fields on $S$, put
	$$
	T\left(U_1, U_2\right)=-P^{T Z}\left[U_1^H, U_2^H\right] \in \Gamma(M, T Z) .
	$$
	We denote $i_T \in \Omega^2\left(S, \operatorname{Hom}\left(E^{\bullet}, E^{\bullet-1}\right)\right)$ be the 2 -form on $S$ which, to vector fields $U_1, U_2$ on $S$, assigns the operation of interior multiplication by $T\left(U_1, U_2\right)$ on $E$. 
	Let $d^{Z}:\Omega^*(Z,F)\to\Omega^{*+1}(Z,F)$ be exterior differentiation along fibers induced by $\nabla^F$. We consider $d^Z$ to be an element of $\Gamma\left(S ; \operatorname{Hom}\left(E^{\bullet}, E^{\bullet+1}\right)\right)$. By \cite[Proposition 3.4]{bismut1995flat}, we have
	$$
	d^M=d^Z+\nabla^E+i_T .
	$$
	Here $d^M:\Omega^*(M,F)\to \Omega^{*+1}(M,F)$ is induced by $\nabla^F.$
	So $d^M$ is a flat superconnection of total degree 1 on $E$. $(d^M)^2=0$ implies that
	$$
	\left(d^Z\right)^2=0,[\nabla^E, d^Z]=0.
	$$
	Let $g^{T Z}$ be a metric on $T Z$. Let $h^E$ be the metric on $E$ induced by $h^F$ and $g^{TZ}$. Sometimes we will also denote $h^E$  by $(\cdot,\cdot)_{L^2(Z)}$.
	
	Let $\nabla^{E,*}, d^{Z ,*},d^{M,*}$ be the formal adjoint of $\nabla^E, d^Z, d^M$ with respect to  ${h^E}$. Set
	$$
	D^Z=d^Z+d^{Z *}, \quad \nabla^{E, u}=\frac{1}{2}\left(\nabla^E+\left(\nabla^E\right)^*\right) .
	$$
	Let $N^Z$ be the number operator of $E$, i.e. acts by multiplication by $k$ on the space $\Gamma\left(W, \Lambda^k\left(T^* Z\right) \otimes F\right)$. For $t>0$, set
	$$
	\begin{array}{l}
		C_t^{\prime}=t^{N^Z / 2} d^M t^{-N^Z / 2}, \quad C_t^{\prime \prime}=t^{-N^Z / 2}\left(d^M\right)^* t^{N^Z / 2}, \\
		C_t=\frac{1}{2}\left(C_t^{\prime}+C_t^{\prime \prime}\right), \quad D_t=\frac{1}{2}\left(C_t^{\prime \prime}-C_t^{\prime}\right) ,
	\end{array}
	$$
	then $C_t^{\prime \prime}$ is the adjoint of $C_t^{\prime}$ with respect to $h^E$ . $C_t$ is a superconnection and $D_t$ is an odd element of $\Omega(S, \End (E))$, and
	$$
	C_t^2=-D_t^2.
	$$
	For $X \in T Z$, let $X^* \in T^* Z$ correspond to $X$ by the metric $g^{T Z}$. Set $c(X)=X^* \wedge-i_X$. Then
	$$
	C_t=\frac{\sqrt{t}}{2} D^Z+\nabla^{E, u}-\frac{1}{2 \sqrt{t}} c(T).
	$$
	Let $f (a) = a \exp(a^2)$, and $\varphi: \Omega^{\text {even }}(S) \rightarrow \Omega^{\text {even }}(S)$ as follows,
	$$
	\varphi \omega=(2 \pi i)^{-k} \omega, \quad \text { for } \omega \in \Omega^{2 k}(S) \text {. }
	$$

	For any $t>0$, the operator $D_t$ is a fiberwise-elliptic differential operator. Then $f\left(D_t\right)$ is a fiberwise trace class operator. For $t>0$, put
	$$
	f^{\wedge}\left(C_t^{\prime}, h^E\right):=\varphi \operatorname{Tr}_s\left(\frac{N^Z}{2} f^{\prime}\left(D_t\right)\right).
	$$

	Put
	$$
	\begin{array}{l}
		\chi(Z,F):=\sum_{j=0}^{\operatorname{dim} Z}(-1)^j \operatorname{rank} H^j(Z, F), \\
		\chi^{\prime}(Z, F):=\sum_{j=0}^{\operatorname{dim} Z}(-1)^j j \operatorname{rank} H^j\left(Z, F\right).
	\end{array}
	$$
	
	\begin{defn}\label{torc}
		The analytic torsion form $\mathcal{T}\left(T^H M, g^{T Z}, h^F\right)$ is a form on $S$ which is given by
		\begin{align*}
			&\ \ \ \ \mathcal{T}\left(T^H M, g^{T Z}, h^F\right)\\
			&=-\int_0^{+\infty}\left(f^{\wedge}\left(C_t^{\prime}, h^E\right)-\frac{\chi^{\prime}(Z, F)}{2} 
			-\frac{  \chi(Z,F)\dim Z-2 \chi^{\prime}(Z, F)}{4} f^{\prime}\left(\frac{\sqrt{-1} \sqrt{t}}{2}\right)\right) \frac{d t}{t} .
		\end{align*}
	\end{defn}
	
	It follows from \cite[Theorem 3.21]{bismut1995flat} that $\mathcal{T}\left(T^H M, g^{T Z}, h^F\right)$ is well defined. The degree $0$ part of $\mathcal{T}\left(T^H M, g^{T Z}, h^F\right)$ is nothing but the fiberwise analytic torsions. This is why $\mathcal{T}\left(T^H M, g^{T Z}, h^F\right)$ is referred to as analytic torsion forms.
	
	\def\Ho{\mathrm{Hodge}}

	\subsection{Analytic torsion forms for manifolds with boundary}\label{defnblbd}
	Let $N \subseteq M$ be a hypersurface transversal to $Z$. We suppose that $\pi|_N: N \rightarrow S$ is  a fibration of fiber $Y:=N \cap Z$. Suppose that $N$ cuts $M$ into two pieces $M_1$ and $M_2$, and fiberwisely, cut $Z$ into two pieces $Z_1$ and $Z_2$. Let $\pi_i: M_i \rightarrow S$ be the restriction of $\pi$ to $M_i$. Then $\pi_i$ is a fibration of $Z_i$ ($i=1,2$). Let $F_i, T^HM_i$ and $h^{F_i}$ be the restriction of $F, T^HM$ and $h^{F}$ to $M_i$ respectively ($i=1,2$). First, identify a neighborhood $U_1$ of $\p M_1$ with $(-2,-1]\times N$, and identify $\p M_1$ with $\{-1\}\times N$; Then identify a neighborhood  $U_2$ of $\p M_2$ with $[1,2)\times N$, and identify $\p M_2$ with $\{1\}\times N$. Let $(s,z)$ be the coordinate of $U_i$ w.r.t. the identification above. Moreover, assume that $\pi_i|_{U_i}(s,z)=\pi|_N(z)$. 
	
	Let $\Omega(Z_i,F)$ denotes the space of $F|_{Z_i}$-valued smooth differential forms, and
	$$\begin{aligned} 
		\Omega_{\mathrm{abs}}\left(Z_1, F\right) &=\left\{\omega \in \Omega\left(Z_1, F\right):\left.i_{\frac{\p}{\p s}} \omega\right|_{\partial Z_1}=\left.i_{\frac{\p}{\p s}}\left(d^{Z_1} \omega\right)\right|_{\partial Z_1}=0\right\}, \\
		\Omega_{\mathrm{rel}}\left(Z_2, F\right) &=\left\{\omega \in \Omega\left(Z_2, F\right):\left.de\wedge\omega\right|_{\partial Z_2}=\left.ds\wedge\left(d^{Z_2,*} \omega\right)\right|_{\partial Z_2}=0\right\}.
	\end{aligned}$$
	We write $E_i=\Omega_{\mathrm{b d}} (Z_i;F)$ for short if the choice of abs/rel is clear. 
	
	Let $D^{Z_i}$ be the restriction of $D^{Z}$ on $Z_i$ acting on $\Omega_{\mathrm{b d}} (Z_i;F_i)$. Similarly, we have $d^{M_i},\nabla^{E_i},C_{t,i},D_{t,i}$ e.t.c.
	
	For $t>0$, put
	$$
	f^{\wedge}\left(C_{t,i}^{\prime}, h^{E_i}\right):=\varphi \operatorname{Tr}_s\left(\frac{N^Z}{2} f^{\prime}\left(D_{t,i}\right)\right),
	$$
	where $h^{E_i}=(\cdot,\cdot)_{L^2(Z_i)}$ is the metric induced by $g^{TZ_i}$ and $h^{F_i}.$

	Put
	$$
	\begin{array}{l}
		\chi_{\bd}(Z_i,F):=\sum_{j=0}^{\operatorname{dim} Z}(-1)^j \operatorname{rank} H_{\bd}^j(Z_i, F_i), \\
		\chi^{\prime}(Z_i, F_i):=\sum_{j=0}^{\operatorname{dim} Z}(-1)^j j \operatorname{rank} H_{\bd}^j\left(Z_i, F_i\right).
	\end{array}
	$$
	
	\begin{defn}\label{torc1}
		The analytic torsion form $\mathcal{T}_i\left(T^H M_i, g^{T Z_i}, h^{F_i}\right)(i=1,2)$ is a form on $S$ which is given by
		\begin{align*}
			&\ \ \ \ \mathcal{T}_i\left(T^H M_i, g^{T Z_i}, h^{F_i}\right)\\
			&=-\int_0^{+\infty}\left(f^{\wedge}\left(C_{t,i}^{\prime}, h^{E_i}\right)-\frac{\chi^{\prime}(Z_i, F_i)}{2} 
			-\frac{  \chi(Z_i,F_i)\dim Z-2 \chi^{\prime}(Z_i, F_i)}{4} f^{\prime}\left(\frac{\sqrt{-1} \sqrt{t}}{2}\right)\right) \frac{d t}{t} .
		\end{align*}
	\end{defn}
	
	It follows from \cite[Theorem 2.17]{zhu2015gluing} that $\mathcal{T}_i\left(T^H M_i, g^{T Z_i}, h^{F_i}\right)$ is well defined.

	\subsection{Analytic torsion form for Witten deformations}\label{aw}
	Let $N \subseteq M$ be a hypersurface transversal to $Z$. We suppose that $\pi|_N: N \rightarrow S$ is  a fibration of fiber $Y:=N \cap Z$. Suppose that $N$ cuts $M$ into two pieces $M_1$ and $M_2$, and fiberwisely, cut $Z$ into two pieces $Z_1$ and $Z_2$. Let $\pi_i: M_i \rightarrow S$ be the restriction of $\pi$ to $M_i$. Then $\pi_i$ is a fibration of $Z_i$ ($i=1,2$). Let $F_i, T^HM_i$ and $h^{F_i}$ be the restriction of $F, T^HM$ and $h^{F}$ to $M_i$ respectively ($i=1,2$). First, identify a neighborhood $U_1$ of $\p M_1$ with $(-2,-1]\times N$, and identify $\p M_1$ with $\{-1\}\times N$; Then identify a neighborhood  $U_2$ of $\p M_2$ with $[1,2)\times N$, and identify $\p M_2$ with $\{1\}\times N$. Let $(s,z)$ be the coordinate of $U_i$ w.r.t. the identification above. 
	
	Let $\bm=M_1\cup[-1,1]\times N\cup M_2$, and $\bar{F},h^{\bar{F}},g^{T\bar{Z}} $ and $T^H\bar{M}$ be the natural extensions of $F,h^F,g^{TZ}$ and $T^HM$ to $\bm$. We still have a natural extension of fiberation $\bm\to S.$ Similar, we have notation $\bar{Z}$ for the fiber $Z.$

	\def\Cn{\mathcal{C}}
	\def\cn{\delta}
	
	Let $p_T$ be a family of odd smooth functions on $[-2,2]$, such that 
	\begin{enumerate}[(a)]
		\item $p_T|_{[1,2]}\equiv T/2$,
		\item $p_T|_{[1/16,1]}(s)=-T\rho(e^{T^2}(1-s))(s-1)^2/2+T/2$ , where $\rho\in C_c^\infty([0,\infty))$, such that $0\leq\rho\leq1$, $\rho_{[0,1/2]}\equiv0,$ $\rho_{[3/4,\infty]}\equiv1,$ $|\rho'|\leq \cn_1$ and $|\rho''|\leq \cn_2$ for some universal constant $\cn_1$ and $\cn_2,$
		\item $\Cn_1T\leq|p_T'|(s)\leq 2\Cn_1 T$, $|p_T''|\leq \Cn_2T$ for some universal constants $\Cn_1$ and $
		\Cn_2$ whenever $s\in[0,1/16].$
		
	\end{enumerate}
	Then one can see that $|p_T'|(s)\leq \Cn_3T||s|-1|$ and $|p_T''|(s)\leq \Cn_4T$ whenever $||s|-1|\leq e^{-T^2}$ for some universal constant $\Cn_3$ and $\Cn_4$. \\
	
	We could think $p_T$ as a function on $\bm.$ Still denotes $p_T$ to be its fiberwise restriction.  
	Let $d^{\bz}_{T}:=d^{\bar{Z}}+dp_T\wedge,$ $d_T^{\bz,*}$ be the adjoint of $d^{\bz}_T$. Then $D_T^{\bz}:=d^{\bz}_T+d_T^{\bz,*},\Delta_T:=(D_T^{\bz})^2.$ Similarly, we have notation $d^{\bm}_T$, $\nabla^{\bE}$, $C_{t,T}$, $D_{t,T}$ e.t.c.
	
	For $t>0$, put
	$$
	f^{\wedge}\left(C_{t,T}^{\prime}, h^{\bE}\right):=\varphi \operatorname{Tr}_s\left(\frac{N^Z}{2} f^{\prime}\left(D_{t,T}\right)\right),
	$$
	where $\bE=\Omega(\bz,\F)$, and $h^{\bE}=(\cdot,\cdot)_{L^2(\bz)}$ is the metric on $\Omega(\bz,\F)$ induced by $g^{T\bz}$ and $h^{\F}.$
	Let $H(\bz;\F)(T)$ be the bundle on $S$, whose fiber at $\s\in S$ is the cohomology $H(Z_\s;F_\s)(T)$ with respect to $d^\bz_T.$

	\begin{defn}\label{torc2}
		The analytic torsion form $\mathcal{T}\left(T^H \bm, g^{T \bz}, h^{\F}\right)(T)$ is a form on $S$ which is given by
		\begin{align*}
			&\ \ \ \ \mathcal{T}\left(T^H \bm, g^{T \bz}, h^{\F}\right)(T)\\
			&=-\int_0^{+\infty}\left(f^{\wedge}\left(C_{t,T}^{\prime}, h^{\bar{E}}\right)-\frac{\chi^{\prime}(\bz, \F)}{2} 
			-\frac{  \chi(\bz,\F)\dim Z-2 \chi^{\prime}(\bz, \F)}{4} f^{\prime}\left(\frac{\sqrt{-1} \sqrt{t}}{2}\right)\right) \frac{d t}{t} .
		\end{align*}
		
	\end{defn}
	It follows from \cite[Theorem 3.21]{bismut1995flat} and discussions in \cref{witwei} that $\mathcal{T}\left(T^H M, g^{T Z}, h^F\right)$ is well defined for a fixed $T>0.$
	
	Lastly, for the sake of convenience, $(\cdot,\cdot)_{L^2}$ (resp. $\|\cdot\|_{L^2}:=\sqrt{(\cdot,\cdot)_{L^2}}$) will be adopted to represent $(\cdot,\cdot)_{L^2(Z)}$ (resp. $\|\cdot\|_{L^2(Z)}:=\sqrt{(\cdot,\cdot)_{L^2(Z)}}$) , $(\cdot,\cdot)_{L^2(\bz)}$ (resp. $\|\cdot\|_{L^2(\bz)}:=\sqrt{(\cdot,\cdot)_{L^2(\bz)}}$)  or $(\cdot,\cdot)_{L^2(Z_i)}$(resp. $\|\cdot\|_{L^2(Z_i)}:=\sqrt{(\cdot,\cdot)_{L^2(Z_i)}}$)  ($i=1,2$), when the context is clear.
	\subsubsection{Witten Laplacian v.s. weighted Laplacian}\label{witwei}
	
	Instead of deforming the de Rham differential $d^{\bz}$, we could also deform the metric $h^{\F}$: let $h^{\F}_T:=e^{-2p_T}h^{\F}$. Similarly, $g^{T\bz}$ and $h^{\F}_T$ induce an $L^2$-norm $h_T^{\bE}=(\cdot,\cdot)_{L^2(\bz),T}$ on $\bE=\Omega(\bz;\F).$
	
	Then the formal adjoint $\delta^{\bz,*}_T$ of $d^{\bz}$ w.r.t. the $(\cdot,\cdot)_{L^2,T}$ is then given by $e^{p_T}d_T^{\bz,*}e^{-p_T}$. Then $\tilde{D}^{\bz}_T:=d^{\bz}+\delta_T^{\bz,*},\tD_T:=(\tilde{D}^{\bz}_T)^2.$ Similarly, we have notation $\tilde{d}^{\bm}_T$, $\tilde{C}_{t,T}$, $\tilde{D}_{t,T}$ e.t.c.

	The Weighted Laplacian $\tD_T$ is given by $\tD_T=d^{\bz}\delta^{\bz,*}_T+\de^{\bz,*}_Td^{\bz},$ one can see that $\tD_T=e^{p_T}\Delta_T e^{-p_T}.$ Let $l_k(T)$ be the $k$-th eigenvalue of $\tD_T$, then $l_k(T)=\l_k(T).$ Moreover, if $u$ is an eigenform of $\Delta_T$ w.r.t. eigenvalue $\l$, then $e^{p_T}u$ is an eigenform of $\tD_T$ w.r.t. eigenvalue $\l.$ 
	
	As a result, $f^{\wedge}\left(C_{t,T}^{\prime}, h^{\bE}\right)=f^{\wedge}\left(\tilde{C}_{t,T}^{\prime}, h_T^{\bE}\right)$.
	
	\subsubsection{Absolute/Relative boundary conditions for weighted Laplacian}
	Let $\bm_1:=M_1\cup [-1,0]\times N$, $\bm_2:=M_2\cup[0,1]\times N$, and $\bz_1:=Z_1\cup [-1,0]\times Y$, $\bz_2:=Z_2\cup[0,1]\times Y$ (see Figure \ref{fig2}). Let $\F_i$ be the restriction of $\F$ on $\bm_i$ ($i=1,2$).
	Set
	\begin{align*}
		\Omega_{\mathrm{abs}} (\bz_1;\F_1)&:=\left\{\omega \in \Omega (\bz_1;\F_1): i_{\frac{\partial}{\partial s}} \omega=0, i_{\frac{\partial}{\partial s}} d^{\bz_1} \omega=0\mbox{ on }\{0\}\times Y\right\},\\
		\Omega_{\mathrm{rel}} (\bz_2;\F_2)_T&:=\left\{\omega \in \Omega (\bz_2;\F_2): d s \wedge \omega=0, d s \wedge \delta_T^{\bz_2, *} \omega=0 \text { on } \{0\}\times Y\right\}.
	\end{align*}

	Let $\tD_{T,i}$ be the restriction of $\tD_T$ acting on $\Omega_{\bd}(\bz_i;\F_i)$. Then by Hodge theory, $\ker(\tD_{T,i})\cong H_{\bd}(\bz_i;\F_i)$.  Similarly, we have notation $\tilde{d}^{\bm}_{T,i}$, $\tilde{C}_{t,T,i}$, $\tilde{D}_{t,T,i}$ e.t.c.
	
	\begin{defn}\label{torc4}
		The analytic torsion form $\mathcal{T}_i\left(T^H \bm_i, g^{T \bz_i}, h_T^{\F_i}\right)(T)$ is a form on $S$ which is given by
		\begin{align*}
			&\ \ \ \ \mathcal{T}_i\left(T^H \bm_i, g^{T \bz_i}, h_T^{\F_i}\right)(T)\\
			&=-\int_0^{\infty}\left(f^{\wedge}\left(\tC_{t,T'}^{\prime}, h_T^{\bE_i}\right)-\frac{\chi^{\prime}(Z_i, F_i)}{2} 
			-\frac{  \chi(Z_i,F_i)\dim Z-2 \chi^{\prime}(Z_i, F_i)}{4} f^{\prime}\left(\frac{\sqrt{-1} \sqrt{t}}{2}\right)\right) \frac{d t}{t} .
		\end{align*}
	\end{defn}
	
	Lastly, $g^{\bz_i}$ and $h^{\F_i}_T$ induce an $L^2$-norm $(\cdot,\cdot)_{L^2(\bz_i),T}$ on $\Omega_{\bd}(\bz_i;\F_i).$
	
	For the sake of convenience, $(\cdot,\cdot)_{L^2,T}$ (resp. $\|\cdot\|_{L^2,T}:=\sqrt{(\cdot,\cdot)_{L^2,T}}$)  will be adopted to represent $(\cdot,\cdot)_{L^2(\bz),T}$ (resp. $\|\cdot\|_{L^2(\bz),T}:=\sqrt{(\cdot,\cdot)_{L^2(\bz),T}}$)  or $(\cdot,\cdot)_{L^2(\bz_i),T} $ (resp. $\|\cdot\|_{L^2(\bz_i),T}:=\sqrt{(\cdot,\cdot)_{L^2(\bz_i),T}}$)  ($i=1,2$), when the context is clear.

	\subsection{Analytic torsion form for  complex of finite dimensional vector bundles}\label{defnblfn}
	
	Let $X$ be a closed manifold. Let
	$$
	(E, \nu): 0 \rightarrow E^0 \stackrel{\nu}{\rightarrow} E^1 \stackrel{\nu}{\rightarrow} \cdots \stackrel{\nu}{\rightarrow} E^n \rightarrow 0 .
	$$
	be a flat complex of complex vector bundles on $X$. That is $\nabla^E=\oplus_{i=0}^n \nabla^{E^i}$ is a flat connection on $E=\oplus_{i=0}^n E^i$ and $\nu$ is a flat chain map, meaning by
	$$
	\left(\nabla^E\right)^2=0, \nu^2=0, \nabla^E \nu=0 .
	$$
	Then $\nu+\nabla^E$ is a flat superconnection of total degree $1$ . By \cite[\S 2(a)]{bismut1995flat}, the cohomology $H(E, v)$ of the complex is a vector bundle on $X$, and let $\nabla^{H(E, v)}$ be the flat connection on $H(E, v)$ induced by $\nabla^E$. Let
	$$
	\begin{array}{l}
		d(E)=\sum_{j=0}^n(-1)^j j \mathrm{rank} E^j ,\\
		d(H(E, v))=\sum_{j=0}^n(-1)^j j \mathrm{rank} H^j(E, v).
	\end{array}
	$$
	
	Let $h^E=\oplus h^{E_i}$ be a metric on $E=\oplus E^i$. Let $\nu^*$ and $\nabla^{E,*}$ be the formal adjoint of $\nu$ and $\nabla^E$ with respect to $h^E$. Let $N$ be the number operator on $E$, i.e. $N$ acts by multiplication by $i$ on $E^i$. Set
	$$
	\begin{array}{l}
		f\left(\nabla^E, h^E\right)=\sum_{i=0}^n(-1)^i f\left(\nabla^{E^i}, h^{E^i}\right) \\
		f\left(\nabla^{H(E, v)}, h^{H(E, v)}\right)=\sum_{i=0}^n(-1)^i f\left(\nabla^{H^i(E, v)}, h^{H(E, v)}\right)
	\end{array}
	$$
	For $t>0$, let
	$$
	D_t=\frac{1}{2} \sqrt{t}\left(v^*-v\right)+\frac{1}{2} (\nabla^{E,*}-\nabla^E).
	$$
	The analytic torsion form for the complex of finite dimensional vector bundles is defined as
	\begin{defn}
		$$\begin{aligned} &\ \ \ \ \T_f\left(\nu+\nabla^E, h^E\right)\\
			&=-\int_0^{\infty}\left(\varphi \operatorname{Tr}_s\left(\frac{1}{2} N f^{\prime}\left(D_t\right)\right)-\frac{1}{2} d(H(E, v))-\half\left(d(E)-d(H(E, v)\right) f^{\prime}\left(\frac{\sqrt{-1} \sqrt{t}}{2}\right)\right) \frac{d t}{t} .\end{aligned}$$
	\end{defn}

	\section{Intermidiate Results}\label{intr}
	In this section, we will state and prove some intermediate results to prove Theorem \ref{main0}.
	For each $\s\in S$, denote $D^{\bz}_{T}(\s)$ (resp. $D^{Z_i}(\s)$ and $\tilde{D}^{\bz_i}_{T}(\s)$) to be the restriction of $D^{\bz}_T$ (resp. $D^{Z_i}$ and $\tilde{D}^{\bz}_T$) on $\bzs$ (resp. $Z_{i,\s}$ and $\bz_{i,\s}$), $\Delta_T(\s):=(D^{\bz}_T(\s))^2$ (resp. $\Delta_{i}(\s):=(D^{Z}_{i}(\s))^2$ and $\tilde\Delta_{T,i}(\s)$). Here $Z_{i,\s}:=\pi_i^{-1}(\s)$ ($i=1,2$).
	
	Let $\l_k(T,\s)$ be the $k$-th eigenvalue of $\Delta_T(\s)$, $\l_k(\s)$ be the $k$-th eigenvalue of $\Delta_1(\s)\oplus\Delta_2(\s)$ acting on $\Omega_{\abs}(\Zso;F_{1,\s})\oplus\Omega_{\rel}(\Zsw;F_{2,\s}),$ and $\tl_k(T,\s)$ be the $k$-th eigenvalue of $\tilde\Delta_{T,1}(\s)\oplus\tilde\Delta_{T,2}(\s)$. 
	
	Moreover, to avoid heavy notation, we will denote $\bz,\F$, $D^{Z_i},\Delta_T$  et cetera for $\bz_{\s},\F_\s, D^{Z_i}_\s,\Delta_T(\s)$ et cetera,  if there is no need to specify the base point $\s$. 
	
	\begin{thm}\label{eigencon}
		$\lim_{T\to\infty}\l_k(T,\s)=\lim_{T\to\infty}\tl_k(T,\s)=\l_k(\s)$ uniformly in $S$. That is, for example, for every $\epsilon>0$, there exists $T_k=T_k(\epsilon)>0$ that doesn't depend on $\s$, such that whenever $T\geq T_k$, $|\l_k(T,\s)-\l_k(\s)|<\epsilon.$
	\end{thm}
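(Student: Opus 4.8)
The plan is to prove the eigenvalue convergence $\lim_{T\to\infty}\l_k(T,\s)=\lim_{T\to\infty}\tl_k(T,\s)=\l_k(\s)$ via a min-max argument, establishing the two inequalities $\limsup_T \l_k(T,\s)\le\l_k(\s)$ and $\liminf_T\l_k(T,\s)\ge\l_k(\s)$ separately, and keeping track of uniformity in $\s$ throughout.

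First I would establish the upper bound. By the conjugation $\tD_T=e^{p_T}\Delta_Te^{-p_T}$ recalled in \cref{witwei}, the spectra of $\Delta_T$ and $\tD_T$ coincide, so it suffices to work with $\tD_T$ and with the weighted $L^2$-norm $\|\cdot\|_{L^2,T}$. The point is that as $T\to\infty$, the weight $e^{-2p_T}$ forces mass onto $M_1$ (where $p_T\equiv -T/2$, so the weight is \emph{large}) and onto $M_2$ (where $p_T\equiv T/2$, the weight small — here one rather uses that $d^{\bz}$ has no deformation term and the comparison runs the other way); the decisive feature is that on the transition region $[-1,1]\times Y$ the function $p_T$ has a nondegenerate profile of size $\sim T$, so Agmon-type estimates will show that eigenforms of bounded weighted energy concentrate away from $\{s=\pm 1\}$. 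Concretely, given an eigenform $u$ of $\tD_{T,1}\oplus\tD_{T,2}$ on $\bz_1\sqcup\bz_2$ with eigenvalue $\tl_j(T,\s)$, $j<k$, I would cut it off near $\{s=0\}$ (the interface of $\bz_1$ and $\bz_2$) using a fixed bump function and glue to produce a test form on $\bz$; using the product structure near $N$ and the explicit form of $p_T$, the commutators $[\tD_T,\text{cutoff}]$ contribute terms that are controlled because the cutoff lives in a region where $|p_T'|\gtrsim T$ — one gains exponential smallness in $T$ from the quasimode concentration. Feeding the resulting $k$-dimensional space of approximate eigenforms into min-max gives $\l_k(T,\s)\le\l_k(\s)+o(1)$, uniformly in $\s$ because all the geometric data ($g^{TZ}$, $h^F$, and the profile of $p_T$) are product-type near $N$ and vary continuously on the compact base $S$.

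For the lower bound I would argue in the reverse direction: take an orthonormal family $u_0(T),\dots,u_k(T)$ of eigenforms of $\tD_T$ on $\bz$ spanning the first $k+1$ eigenvalues. Using an IMS-type localization formula with a partition of unity $\{\phi_1,\phi_2\}$ subordinate to the decomposition $\bz=\bz_1\cup\bz_2$ (supported so that $\phi_1\phi_2$ lives in $[-\delta,\delta]\times Y$), one writes $\tD_T(\phi_i u)=\phi_i\tD_T u+[\tD_T,\phi_i]u$ and estimates, as $T\to\infty$, the localization error by the energy of $u$ in the collar times a factor that decays because $|p_T'|$ is bounded below there. The key quantitative input is an a priori decay estimate — an Agmon/Carleman estimate for $\tD_T$ — saying that eigenforms with eigenvalue $O(1)$ have $L^2$-mass (in the weighted norm) exponentially small near $\{s=\pm1\}$; this is where the hypothesis $\Cn_3 T\,||s|-1|\le|p_T'|(s)$ on the collar is used. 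Matching the weighted boundary conditions: the cutoff $\phi_i u$ restricted to $\bz_i$ satisfies the absolute/relative conditions at $\{s=0\}$ up to the same exponentially small error, so min-max on $\tD_{T,1}\oplus\tD_{T,2}$ yields $\tl_k(T,\s)\ge\l_k(\s)-o(1)$. Combining the two bounds and using the identity $l_k(T)=\l_k(T)$ gives the theorem, and since $\l_k(\s)$ itself does not depend on $T$, every estimate can be made with a threshold $T_k(\epsilon)$ depending only on $\epsilon$, $k$, and the (fixed) geometry — hence uniform in $\s$.

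The main obstacle I anticipate is getting the decay estimates genuinely \emph{uniform} in both $T$ and $\s$, while also controlling how the \emph{spectral gap} below $\l_k(\s)$ behaves: the min-max comparisons are clean only if one knows that the relevant approximate eigenspaces are close in the graph norm, which requires a lower bound on $\l_{k+1}(\s)-\l_k(\s)$ or, more robustly, a resolvent/quadratic-form comparison that does not see the gap at all. I would prefer the quadratic-form route: show directly that for any form $v$ in the domain of $\tD_{T,1}\oplus\tD_{T,2}$ there is $\tilde v$ in the domain of $\tD_T$ with $\|\tilde v-\iota v\|$ and $|\,\|\tD_T\tilde v\|^2-\|(\tD_{T,1}\oplus\tD_{T,2})v\|^2|$ both $o(1)$ uniformly, and symmetrically, which by a standard lemma on pairs of nonnegative self-adjoint operators forces eigenvalue convergence without any gap hypothesis. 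The secondary difficulty is the careful bookkeeping with the $\rho(e^{T^2}(1-s))$ factor in the definition of $p_T$, which makes the transition layer have width $\sim e^{-T^2}$; one must check that on this thin layer the relevant integrals (of $|[\tD_T,\phi]u|^2$ against the weight) are still $o(1)$, which they are precisely because the cutoffs $\phi_i$ can be chosen independent of $T$ and supported away from $\{|s|=1\}$, so they never meet the thin layer at all.
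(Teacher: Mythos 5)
Your proposal is correct in outline but proceeds along a genuinely different route from the paper. The paper does not reprove pointwise convergence at all: for each fixed $\s$, it simply cites \cite[Theorem 3.1]{Yantorsions}, and the entire content of its proof of Theorem \ref{eigencon} is the upgrade to uniform convergence in $\s$. That upgrade is a soft compactness argument: Lemma \ref{equicon} (a quadratic-form comparison under small changes of $g^{TZ},h^F,\nabla^F$) together with the uniform upper bound of Lemma \ref{o1} gives equicontinuity of the family $\s\mapsto\l_k(T,\s)$ (Corollary \ref{equicon1}); combined with continuity of $\l_k(\s)$ and compactness of $S$, pointwise convergence of an equicontinuous family to a continuous limit is automatically uniform. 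You, by contrast, propose to re-derive the pointwise statement from scratch via a min-max/IMS/Agmon argument and to carry uniformity through every quantitative step. Both are valid strategies, but the paper's is much lighter at the level of this theorem precisely because it offloads the hard analysis to the cited result and then only has to produce a Dini-type argument. Your route would essentially reprove \cite[Theorem 3.1]{Yantorsions} with uniform constants, which is more work and also more fragile.

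One substantive gap in your sketch worth flagging: in your upper-bound paragraph you take eigenforms of $\tD_{T,1}\oplus\tD_{T,2}$ (eigenvalues $\tl_j(T,\s)$) and glue them into test forms for $\tD_T$. That would yield $\l_k(T,\s)\le\tl_k(T,\s)+o(1)$, not $\l_k(T,\s)\le\l_k(\s)+o(1)$. The target $\l_k(\s)$ is the $k$-th eigenvalue of $\Delta_1\oplus\Delta_2$ acting on $Z_1\sqcup Z_2$ (no Witten potential, smaller domain), so your argument as written is missing a second comparison: you still need to show $\tD_{T,i}$ on $\bz_i=Z_i\cup(\text{collar})$ converges spectrally to $\Delta_i$ on $Z_i$ as $T\to\infty$. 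That step is nontrivial (it is where the localization by the Witten potential really enters) and is exactly what the cited \cite[Theorem 3.1]{Yantorsions} encapsulates; in the paper's proof it appears only through the phrase ``Similarly, one can show $\lim_{T\to\infty}\tl_k(T,\s)=\l_k(\s)$.'' Your quadratic-form-comparison suggestion at the end (avoid spectral gaps, compare forms directly) is the right instinct for making this rigorous, but it should be applied to that comparison, not only to the gluing of $\bz_1\sqcup\bz_2$ into $\bz$.
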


	By Hodge theory, there exists $k_0\in\Z$, such that whenever $k< k_0$ $\l_k(\theta)\equiv0$, $\l_{k_0}(\theta)\neq0$. Let $\delta:=\frac{1}{2}\inf_{\s\in S}\l_{k_0}(\s)>0.$ Then by Theorem \ref{eigencon}, when $T$ is large enough, all eigenvalues of $\Delta_T$ inside $[0,\delta]$ converge to $0$ as $T\to\infty$. 
	
	Let $\Omega_{\sm}(\bz,\bar{F})(T)$ be the vector bundle over $S$, such that for all $\s\in S$, $\Omega_{\sm}(\bz_{\s},\F_\s)(T)$ is the space generated by eigenforms of $\Delta_T$ for eigenvalues inside $[0,\delta]$,
	and $\P^{\delta}(T)$ be the orthogonal projection w.r.t. $\Omega_{\sm}(\bz,\bar{F})(T)$.
	
	On $\Omega_{\sm}(\bz,\bar{F})(T)$, let  $\nabla^{\delta,T}:=\P^{\delta}(T)\nabla^{\bE}$ and $D_t^{\delta,T}:=\nabla^{\delta,T}-\nabla^{\delta,T,*}-\half\sqrt{t}(d^{\bz}_T-d^{\bz,*}_T)$.
	For $t>0$, put
	$$
	f_{\la}^{\wedge}\left(C_{t,T}^{\prime}, h^E\right):=\varphi \operatorname{Tr}_s\left(\frac{N^Z}{2} f^{\prime}\left(D_{t,T}\right)\right)-\varphi \operatorname{Tr}_s\left(\frac{N^Z}{2} \P^{\delta}(T)f^{\prime}\left(\P^{\delta}(T)D^{\delta,T}_t\P^{\delta}(T)\right)\P^{\delta}(T)\right),
	$$
	$$
	f_{\sm}^{\wedge}\left(C_{t,T}^{\prime}, h^E\right):=\varphi \operatorname{Tr}_s\left(\frac{N^Z}{2} \P^{\delta}(T)f^{\prime}\left(\P^{\delta}(T)D_{t}^{\delta,T}\P^{\delta}(T)\right)\P^{\delta}(T)\right).
	$$
	Then proceeding as in the proof of \cite[Theorem 2.13]{bismut1995flat} (simply replace $\frac{\P^{\ker(V)}}{\l}$ in \cite[(2.46)]{bismut1995flat} by $\P^{\delta}(T)(\l-\sqrt{t}(d^{\bz}_T-d^{\bz,*}_T))^{-1}$ e.t.c.),
	\begin{thm}\label{intnew1}For some $\s$-independent constants $C(T),C'(T)>0$, such that for $t\geq1$
		\[\left|f_{\la}^{\wedge}\left(C_{t,T}^{\prime}, h^E\right)\right|\leq \frac{C(T)}{\sqrt{t}}.\]
		For $t\in(0,1]$,
		\begin{align*}
			\left|f_{\la}^{\wedge}\left(C_{t,T}^{\prime}, h^E\right)-\frac{  \chi(\bz,\F)\dim(Z)-2d(\Omega_{\sm}(\bm,\F)(T))}{4}\right|\leq C'(T)t.
		\end{align*}
		Here we put a metric $g^{TS}$ on $S$, and for $\a\in\Omega(S),$ $|\a|:=\sqrt{g^{TS}(\a,\a)}.$
	\end{thm}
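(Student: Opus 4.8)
The plan is to follow the strategy indicated by the authors themselves in the statement: mimic the proof of \cite[Theorem 2.13]{bismut1995flat}, which establishes the small- and large-time behavior of $f^{\wedge}(C_t',h^E)$ in the closed case, but carry out the estimates for the \emph{difference} $f^\wedge_{\la}(C_{t,T}',h^E) = f^\wedge(C_{t,T}',h^E) - f^\wedge_{\sm}(C_{t,T}',h^E)$ with $T$ held fixed. Throughout, $\s$ ranges over the compact base $S$, so once I exhibit bounds with constants depending only on $T$ (and on the fixed geometric data $g^{TS}, g^{T\bz}, h^{\F}, p_T$), the uniformity in $\s$ is automatic by compactness. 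The key algebraic point is that on the finite-rank bundle $\Omega_{\sm}(\bz,\F)(T)$ the superconnection $C_{t,T}$ restricts, up to the projection $\P^\delta(T)$, to $\P^\delta(T)(\nabla^{\delta,T} + \tfrac{\sqrt t}{2}(d^\bz_T - d^{\bz,*}_T))\P^\delta(T)$ — this is exactly the ``finite-dimensional model'' built into the definition of $D_t^{\delta,T}$ — so that $f^\wedge_{\sm}$ is precisely the contribution of the small eigenvalues and $f^\wedge_{\la}$ only sees the spectral gap $[\delta,\infty)$ of $\Delta_T$.

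For the large-time bound ($t \ge 1$), I would write $f'(D_{t,T}) - \P^\delta(T) f'(\P^\delta(T) D_t^{\delta,T}\P^\delta(T))\P^\delta(T)$ as a contour integral $\frac{1}{2\pi i}\oint f'(\l)\big[(\l - D_{t,T})^{-1} - \P^\delta(T)(\l - \sqrt t(d^\bz_T - d^{\bz,*}_T))^{-1}\big]\,d\l$ — this is the substitution the authors flag, replacing $\frac{\P^{\ker V}}{\l}$ in \cite[(2.46)]{bismut1995flat} by $\P^\delta(T)(\l - \sqrt t(d^\bz_T - d^{\bz,*}_T))^{-1}$. Since $D_{t,T}$ has a spectral gap of size $\asymp \sqrt{t}$ between the ``small'' block and the ``large'' block (the small eigenvalues of $\Delta_T$ are $\le\delta$ and scale like $\sqrt\delta\,\sqrt t$ in $D_{t,T}$, while the large ones are $\ge \delta\sqrt t$ in absolute value for $\Delta_T$... more precisely the eigenvalues of $\Delta_{t,T}$ off the small block are $\ge \delta t$), the resolvent difference is controlled, and the Gaussian factor $e^{\l^2}$ in $f'$ together with $\operatorname{Tr}_s$ of the number operator produces the decay $C(T)/\sqrt t$. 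The cancellation of the leading constant term (which would otherwise be $O(1)$) is forced by the subtraction of $f^\wedge_{\sm}$; what survives is genuinely $O(1/\sqrt t)$.

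For the small-time bound ($t \in (0,1]$), I would again follow \cite[Theorem 2.13]{bismut1995flat}: the local index theory / rescaling argument shows $f^\wedge(C_{t,T}',h^E) = \tfrac14(\chi(\bz,\F)\dim Z - 2\chi'(\bz,\F)) + O(t)$ as $t\to 0$, with the $O(t)$ constant depending on $T$ through the operator $\Delta_T$ (whose symbol is $T$-independent, so only lower-order terms carry $T$). On the other side, $f^\wedge_{\sm}(C_{t,T}',h^E)$ is the analytic-torsion-form integrand of the \emph{finite-dimensional} complex $(\Omega_{\sm}(\bz,\F)(T), d^\bz_T)$, whose small-$t$ expansion — by the finite-dimensional computation in \cite[\S2]{bismut1995flat} — is $\tfrac14(d(\Omega_{\sm}(\bz,\F)(T)) \cdot 1 - 2\,d(H)) + O(t)$ type, but here since the fibers are $0$-dimensional as a ``manifold'' the relevant constant is $\tfrac12 d(\Omega_{\sm}) + \dots$; subtracting, the difference of the two constants is exactly $\tfrac14(\chi(\bz,\F)\dim Z - 2 d(\Omega_{\sm}(\bm,\F)(T)))$ once one uses $\chi'(\bz,\F) = d(H(\Omega_{\sm}(\bz,\F)(T)))$ (by Hodge theory small eigenvalues compute cohomology) together with the fact that $\chi(\bz,\F)$ equals the Euler characteristic of the small complex. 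I expect the bookkeeping of these constant terms — matching the combinatorial factors $\dim Z$, $\chi$, $\chi'$, $d(\Omega_{\sm})$ exactly as stated — to be the main obstacle, since the genuine analytic input (gap estimate, heat-kernel short-time expansion) is essentially quoted verbatim from \cite{bismut1995flat}; the real work is verifying that the two asymptotic expansions line up to produce precisely the claimed constant and that all error constants can be taken independent of $\s\in S$ by a compactness/continuity argument on the heat coefficients.
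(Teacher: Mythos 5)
Your overall strategy is the one the paper itself invokes: the statement is proved by the authors with a one-line pointer to \cite[Theorem 2.13]{bismut1995flat}, with the instruction to replace $\P^{\ker V}/\l$ in the contour-integral resolvent expansion by $\P^{\delta}(T)\big(\l-\sqrt{t}(d^{\bz}_T-d^{\bz,*}_T)\big)^{-1}$; your contour-integral reading of the large-time estimate and your appeal to the spectral gap $\Delta_T\ge\delta$ off the small-eigenvalue block, together with the short-time heat asymptotics on the infinite-dimensional side and the finite-dimensional model on the small block, is exactly the intended argument. The uniformity in $\s$ for fixed $T$ by compactness of $S$ is also the right remark.

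There is, however, a concrete arithmetic error in your small-time constant that you should fix, and the ``fix-up'' via $\chi'(\bz,\F)=d(H(\Omega_{\sm}))$ you invoke afterwards is a red herring. You claim the local index / rescaling argument gives
\[
f^{\wedge}(C_{t,T}',h^{\bE})=\tfrac14\big(\chi(\bz,\F)\dim Z - 2\chi'(\bz,\F)\big)+O(t).
\]
This is not the right limit. From Definition \ref{torc} the convergence of $\mathcal{T}$ at $t\to 0$ requires, since $f'(i\sqrt{t}/2)\to f'(0)=1$,
\[
f^{\wedge}(C_{t,T}',h^{\bE})\ \longrightarrow\ \frac{\chi'(\bz,\F)}{2}+\frac{\chi(\bz,\F)\dim Z-2\chi'(\bz,\F)}{4}\ =\ \frac{\chi(\bz,\F)\dim Z}{4}.
\]
On the finite-dimensional side, the definition of $\T_f$ in \cref{defnblfn} forces
\[
f^{\wedge}_{\sm}(C_{t,T}',h^{\bE})=\varphi\operatorname{Tr}_s\Big(\tfrac{N}{2}\,\P^{\delta}(T)f'\big(\P^{\delta}(T)D^{\delta,T}_t\P^{\delta}(T)\big)\P^{\delta}(T)\Big)\ \longrightarrow\ \tfrac12\,d\big(\Omega_{\sm}(\bz,\F)(T)\big),
\]
again by sending $t\to 0$ in the subtracted terms $\tfrac12 d(H)+\tfrac12(d(E)-d(H))f'(i\sqrt{t}/2)$. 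Subtracting gives directly
\[
f^{\wedge}_{\la}\ \longrightarrow\ \frac{\chi(\bz,\F)\dim Z}{4}-\frac{d(\Omega_{\sm}(\bm,\F)(T))}{2}\ =\ \frac{\chi(\bz,\F)\dim Z-2\,d(\Omega_{\sm}(\bm,\F)(T))}{4},
\]
which is the theorem's constant with no appeal to the Hodge-theoretic identity $\chi'=d(H(\Omega_{\sm}))$. With your stated $O(t)$-constants the two small-time expansions do line up; it is only the stated leading constant for $f^{\wedge}$ that was wrong, and correcting it removes the need for the cancellation you were trying to manufacture.
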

	\begin{rem}\label{33}
		The key challenge in this article is the possible $T$-dependence of the constants $C(T)$ and $C'(T)$. Similar issues can be addressed by introducing a two-parameter deformation and taking the adiabatic limit of analytic torsion forms, as is done in \cite{puchol2020adiabatic}. However, we use a different approach in this paper. We figure out that when $t\in[1,\infty)$,
		$\left|f_{\la}^{\wedge}\left(C_{t,T}^{\prime}, h^E\right)\right|$ could be bounded by a $(T,\s)$-independent measurable function $G(t)$, such that $t^{-1}G(t)$ is in $L^{1}([1,\infty))$; when $t\in[0,1]$, the positive degree component of $f_{\la}^{\wedge}\left(C_{t,T}^{\prime}, h^E\right)$ could be bounded by $C't$ for some $(T,\s)$-independent $C'$, while the degree $0$ component of $f_{\la}^{\wedge}\left(C_{t,T}^{\prime}, h^E\right)$ is related to the analytic torsion. To deal with the degree $0$ component, rather than the adiabatic limit used in \cite{10.2140/apde.2021.14.77}, a coupling technique is introduced in \cite[\S 7.1]{Yantorsions}. Together with Theorem \ref{eigencon} and dominated convergence theorem, one can understand $\mathcal{T}_{\la}\left(T^H \bm, g^{T \bz}, h^{\F}\right)(T)$ as $T\to\infty.$
	\end{rem} 
	By Theorem \ref{intnew1}, we could set
	\begin{align*}
		&\ \ \ \ \mathcal{T}_{\la}^{\mL}\left(T^H \bm, g^{T \bz}, h^{\F}\right)(T)\\
		&=-\int_1^{\infty}\left(f_{\la}^{\wedge}\left(C_{t,T}^{\prime}, h^{\bar{E}}\right)- \frac{  \chi(\bz,\F)\dim(Z)-2d(\Omega_{\sm}(\bm,\F)(T))}{4} f^{\prime}\left(\frac{\sqrt{-1} \sqrt{t}}{2}\right)\right)\frac{d t}{t},
	\end{align*}
	\begin{align*}
		&\ \ \ \ \mathcal{T}_{\la}^{\mS}\left(T^H \bm, g^{T \bz}, h^{\F}\right)(T)\\
		&=-\int_0^{1}\left(f_{\la}^{\wedge}\left(C_{t,T}^{\prime}, h^{\bar{E}}\right)
		-\frac{  \chi(\bz,\F)\dim(Z)-2d(\Omega_{\sm}(\bm,\F)(T))}{4} f^{\prime}\left(\frac{\sqrt{-1}\sqrt{t}}{2}\right)\right) \frac{d t}{t} .
	\end{align*}
	Similarly, let $\P_i$ be the be the orthogonal projection w.r.t. $\ker(\Delta_i)$ $i=1,2$,
	and $\tP_i(T)$ be the be the orthogonal projection w.r.t. $\ker(\tilde\Delta_{T,i})$ $i=1,2$.
	
	Let $\nabla^{\H_i}:=\P_i\nabla^{E_i}$ and $D^{\H_i}_t:=\nabla^{\H_i}-\nabla^{\H_i,*}+\half\sqrt{t}(d^{Z_i}-d^{Z_i,*})$.
	
	For $t>0$, put
	\begin{align*}
		&\ \ \ \ f_{\la}^{\wedge}\left(C_{t,i}^{\prime}, h^E\right):=\varphi \operatorname{Tr}_s\left(\frac{N^Z}{2} f^{\prime}\left(D_{t,i}\right)\right)-\varphi \operatorname{Tr}_s\left(\frac{N^Z}{2} \P_if^{\prime}\left(\P_iD^{\H_i}_{t}\P_i\right)\P_i\right)\\
		&=\varphi \operatorname{Tr}_s\left(\frac{N^Z}{2} f^{\prime}\left(D_{t,i}\right)\right)-\frac{\chi^{\prime}(Z_i, F_i)}{2} \mbox{ (By \cite[Theorem 3.15]{bismut1995flat})}.
	\end{align*}
	Similarly,
	\begin{align*}
		&\ \ \ \ f_{\la}^{\wedge}\left(\tC_{t,T,i}^{\prime}, h_T^E\right):=\varphi \operatorname{Tr}_s\left(\frac{N^Z}{2} f^{\prime}\left(\ttD_{t,T,i}\right)\right)-\frac{\chi^{\prime}(Z_i, F_i)}{2}.
	\end{align*}
	Set
	\begin{align*}
		&\ \ \ \ \mathcal{T}_{\la,i}^{\mL}\left(T^H M_i, g^{T Z_i}, h^{F_i}\right)&\\
		&=-\int_1^{\infty}\left(f_{\la}^{\wedge}\left(C_{t,i}^{\prime}, h^{E_i}\right)- \frac{  \chi(Z_i,F_i)\dim(Z)-2\chi'(Z_i,F_i)}{4} f^{\prime}\left(\frac{\sqrt{-1} \sqrt{t}}{2}\right)\right)\frac{d t}{t},
	\end{align*}
	\begin{align*}
		&\ \ \ \ \mathcal{T}_{\la,i}^{\mS}\left(T^H M_i, g^{T Z_i}, h^{F_i}\right)\\
		&=-\int_0^{1}\left(f_{\la}^{\wedge}\left(C_{t,i}^{\prime}, h^{{E_i}}\right)
		-\frac{  \chi(Z_i,F_i)\dim(Z)-2\chi'(Z_i,F_i)}{4} f^{\prime}\left(\frac{\sqrt{-1} \sqrt{t}}{2}\right)\right) \frac{d t}{t}.
	\end{align*}
	
	Similarly, one has $\mathcal{T}_{\la,i}^{\mL}\left(T^H \bm_i, g^{T \bz_i}, h_T^{\F_i}\right)(T)$ and $\mathcal{T}_{\la,i}^{\mS}\left(T^H \bm_i, g^{T \bz_i}, h_T^{\F_i}\right)(T)$.
	
	One can see that for ``$\bullet$"=``$\mL$'' or ``$\mS$'',
	$$ \mathcal{T}_{\la,i}^{\bullet}\left(T^H M_i, g^{T Z_i}, h^{F_i}\right)= \mathcal{T}_{i}^{\bullet}\left(T^H M_i, g^{T Z_i}, h^{F_i}\right),$$ 
	$$\mathcal{T}_{\la,i}^{\bullet}\left(T^H \bm_i, g^{T \bz_i}, h_T^{\F_i}\right)(T)=\mathcal{T}_{i}^{\bullet}\left(T^H \bm_i, g^{T \bz_i}, h_T^{\F_i}\right)(T).$$

	\begin{thm}\label{limitfhat}\label{thmfhat}
		$$\lim_{T\to\infty}f_{\la}^{\wedge}\left(C_{t,T}^{\prime}, h^E\right)=\lim_{T\to\infty}\sum_{i=1}^2f_{\la}^{\wedge}\left(\tC_{t,T,i}^{\prime}, h_T^\bE\right)=\sum_{i=1}^2f_{\la}^{\wedge}\left(C_{t,i}^{\prime}, h^E\right).$$ More precisely, for example, for every $\epsilon>0$, there exists $T_0(t,\epsilon)>0$, such that whenever $T\geq T_0(t,\epsilon),$
		\[\|f_{\la}^{\wedge}\left(C_{t,T}^{\prime}, h^E\right)-\sum_{i=1}^2f_{\la}^{\wedge}\left(C_{t,i}^{\prime}, h^E\right)\|_{L^\infty}<\epsilon.\]
		Here we put a metric $g^{TS}$ on $S$, and for $\a\in\Omega(S),$
		\[\|\a\|_{L^\infty}:=\sup_{\s\in S}|\a|(\s),\]
		where $|\a|:=\sqrt{g^{TS}(\a,\a)}.$
	\end{thm}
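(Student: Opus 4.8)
\textbf{Proof proposal for Theorem \ref{limitfhat}.}

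The plan is to reduce the statement to the convergence of heat-type traces and then invoke the eigenvalue convergence of Theorem \ref{eigencon} together with uniform resolvent estimates. First, I would fix $t>0$ and rewrite each of the three quantities $f_{\la}^{\wedge}(C_{t,T}^{\prime},h^{\bE})$, $\sum_i f_{\la}^{\wedge}(\tC_{t,T,i}^{\prime},h_T^{\bE})$ and $\sum_i f_{\la}^{\wedge}(C_{t,i}^{\prime},h^{E})$ as differences of two $\Tr_s$-expressions: the ``full'' fiberwise supertrace $\varphi\Tr_s(\tfrac{N^Z}{2}f'(D_{t,\bullet}))$ and the ``small-eigenvalue'' supertrace built from the projection $\P^\delta(T)$ (resp. $\P_i$). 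Since $f'(a)=(1+2a^2)e^{a^2}$ and $D_{t,T}$ differs from $\tfrac{\sqrt t}{2}D^{\bz}_T$ only by the $\s$-derivative terms $\nabla^{\bE,u}-\tfrac{1}{2\sqrt t}c(T)$, which are nilpotent in the $\Omega(S)$-direction, each $f'(D_{t,\bullet})$ is a finite sum (in form degree) of operators of the shape $(\text{polynomial in }D_{t,\bullet})\,e^{D_{t,\bullet}^2}$, and $e^{D_{t,\bullet}^2}$ is (up to the nilpotent corrections) the heat operator $e^{-\frac{t}{4}\Delta_{\bullet}}$. So everything is controlled by the fiberwise heat kernels of $\Delta_T$, $\tD_{T,i}$ and $\Delta_i$.

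Next I would establish the key convergence input: as $T\to\infty$, the heat operator $e^{-\frac{t}{4}\Delta_T}$ on $\Omega(\bz;\F)$ converges, in the appropriate trace-norm sense and uniformly in $\s\in S$, to the direct sum $e^{-\frac{t}{4}\Delta_1}\oplus e^{-\frac{t}{4}\Delta_2}$ on $\Omega_{\abs}(\Zso;F_1)\oplus\Omega_{\rel}(\Zsw;F_2)$, and similarly for the weighted Laplacians $\tD_{T,i}$ after the conjugation $e^{p_T}(\cdot)e^{-p_T}$ identifying $\tD_{T,i}$ with $\Delta_{T,i}$ (as recorded in \S\ref{witwei}). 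One way to get this is via the contour-integral representation $e^{-\frac{t}{4}\Delta_T}=\frac{1}{2\pi i}\oint e^{-\frac{t}{4}\l}(\l-\Delta_T)^{-1}\,d\l$ together with (i) the eigenvalue convergence $\l_k(T,\s)\to\l_k(\s)$ of Theorem \ref{eigencon}, which controls the spectrum of $\Delta_T$ near $0$ uniformly, and (ii) a uniform (in $T,\s$) Weyl-type bound $\#\{k:\l_k(T,\s)\le \Lambda\}\le C(1+\Lambda)^{\dim Z/2}$ on the counting function — the latter follows from a standard Agmon-type/finite-propagation-speed argument once one knows the deformation $dp_T$ is supported in the small collar and is locally controlled as in conditions (a)–(c); on the product collar $[-1,1]\times Y$ one has an explicit model operator (a harmonic-oscillator-type operator in the $s$-variable) whose spectrum one can compare directly. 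Combining (i) and (ii) with dominated convergence over the spectrum gives trace-norm convergence of $e^{-\frac{t}{4}\Delta_T}$, hence of $f'(D_{t,T})$ weighted by $N^Z/2$, uniformly in $\s$.

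Then the statement follows by splitting $f_{\la}^{\wedge}=f^{\wedge}-f_{\sm}^{\wedge}$: for the ``full'' part, the trace-norm convergence just established gives $\varphi\Tr_s(\tfrac{N^Z}{2}f'(D_{t,T}))\to\sum_i\varphi\Tr_s(\tfrac{N^Z}{2}f'(D_{t,i}))$ uniformly in $\s$; for the ``small'' part, Theorem \ref{eigencon} implies that for $T$ large $\Omega_{\sm}(\bz;\F)(T)$ has rank equal to $\sum_i\dim\ker\Delta_i=\sum_i\dim H_{\bd}^\bullet(Z_i;F_i)$, and the finite-rank operators $\P^\delta(T)D_t^{\delta,T}\P^\delta(T)$ converge (again uniformly in $\s$, using convergence of the corresponding spectral projections, which one extracts from the same contour integral with a contour enclosing only $[0,\delta]$) to $\bigoplus_i\P_i D_t^{\H_i}\P_i$, so $f_{\sm}^{\wedge}(C_{t,T}^{\prime},h^E)\to\sum_i\frac{\chi'(Z_i,F_i)}{2}$, matching the identity used implicitly in the definition of $f_{\la,i}^{\wedge}$. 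The weighted version $\sum_i f_{\la}^{\wedge}(\tC_{t,T,i}^{\prime},h_T^{\bE})$ is handled identically after the conjugation identifying it with the $\Delta_{T,i}$-picture and restricting the collar analysis to the half-collars $[-1,0]\times Y$ and $[0,1]\times Y$ with absolute/relative boundary conditions at $\{0\}\times Y$; the boundary conditions are preserved under the conjugation and under the contour-integral manipulations since $p_T$ is constant ($=\mp T/2$) on each $M_i$ and the boundary term of condition (b) is smooth.

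The main obstacle I expect is the uniform-in-$T$ (and uniform-in-$\s$) control of the spectrum of $\Delta_T$ away from $0$ — i.e.\ establishing that no eigenvalue can ``escape to $0$'' beyond the ones forced by $\Omega_{\sm}$, and that the counting function is uniformly bounded, so that the contour integral / dominated convergence argument goes through with $\s$- and $T$-independent constants for each fixed $t$. This is precisely the local analysis near the collar $[-1,1]\times Y$ where $p_T$ has a degenerate critical behavior ($p_T'\sim T||s|-1|$): one must show the transverse (in $s$) part of $\Delta_T$ has a spectral gap of the expected order, via an IMS-type localization splitting $\bm$ into the region $\{||s|-1|\ge e^{-T^2}\}$ (where $|dp_T|\gtrsim T e^{-T^2}$ forces large eigenvalues after squaring, plus a Weitzenböck/Lichnerowicz-type term) and the remaining thin region (where the explicit oscillator model applies). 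Everything else — the passage from heat traces to $f'(D_{t,\bullet})$, the form-degree bookkeeping via nilpotence of the $\Omega(S)$-corrections, and the identification of limits with the $M_i$-quantities — is routine once this spectral control is in hand.
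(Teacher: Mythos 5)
Your strategic outline is similar in spirit to the paper's: uniform spectral bounds plus the eigenvalue convergence of Theorem \ref{eigencon}, followed by a dominated-convergence argument. However, there is a genuine gap in the reduction step. You propose to control everything via trace-norm convergence of the fiberwise heat operator $e^{-\frac{t}{4}\Delta_T}\to e^{-\frac{t}{4}\Delta_1}\oplus e^{-\frac{t}{4}\Delta_2}$, which you claim follows from eigenvalue convergence together with a Weyl-type counting bound. Even granting the Weyl bound, this only controls the $\Omega^0(S)$-component of $\varphi\Tr_s(\tfrac{N^Z}{2}f'(D_{t,T}))$. The positive-degree components of $f'(D_{t,T})$ in $\Omega(S)$ are built from the superconnection corrections $\nabla^{E,u}-\tfrac{1}{2\sqrt{t}}c(T)$, and their supertrace involves matrix elements of horizontal derivatives between eigenforms of $\Delta_T$. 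Convergence of eigenvalues alone gives no control over these matrix elements; one needs quantitative convergence of the eigenforms themselves (and of the resolvent sandwiched with the nilpotent $\Omega(S)$-correction $\F_t$). In the paper this is supplied by Lemma \ref{fhat4}, which compares $(\l-D_{t,T})^{-1}Q_Tu$ with $\cE\bigl((\l-D_{t,i})^{-1}u\bigr)$ on eigenforms $u$, together with the approximate-eigenform construction $Q_T$ and a Gram--Schmidt orthogonalization. Your proposal contains no analogue of that step; you assert convergence of the finite-rank operators $\P^{\delta}(T)D_t^{\delta,T}\P^{\delta}(T)$ ``from the same contour integral,'' but that again presupposes the resolvent/eigenform convergence that needs to be established.

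A second, subtler gap is in the uniform-in-$T$ tail truncation. The paper's Lemma \ref{fhat1}, built on the nilpotence identity $\F^{j+1}_t=[V_T,\F^j_t]$ and the $T$-independent bound $\|\F^j_t\|\leq C_j(1+\sqrt{t}^{-1})$ (Lemma \ref{fhat0}), shows that the diagonal matrix elements of $N^Z\bigl(f'(D_{t,T})-f'(\sqrt{t}V_T)\bigr)$ on eigenvectors of $V_T$ decay like $|\mu|^{-j}$, uniformly in $T$ and $\s$. Combined with Lemma \ref{est1} this gives a uniform tail cut-off for the full supertrace including all $\Omega(S)$-degree corrections. A bare Weyl bound controls $\Tr\,e^{-\frac{t}{4}\Delta_T}$ but not these weighted off-diagonal contributions. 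You would need to reconstruct something equivalent to Lemma \ref{fhat1} for your dominated-convergence step to actually apply to $f_{\la}^{\wedge}$ rather than only to its degree-zero part.
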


	\begin{thm}\label{larcon}
		\begin{align*}&\ \ \ \ \lim_{T\to\infty} \mathcal{T}_{\la}^{\mL}\left(T^H \bm, g^{T \bz}, h^{\F}\right)(T)=\lim_{T\to\infty}\sum_{i=1}^2\mathcal{T}_{\la,i}^{\mL}\left(T^H \bm_i, g^{T \bz_i}, h_T^{\F_i}\right)(T)\\
			&=\sum_{i=1}^2\mathcal{T}_{\la,i}^{\mL}\left(T^H M_i, g^{T Z_i}, h^{F_i}\right).\end{align*}
		The limit is taken w.r.t. the topology described in Theorem \ref{limitfhat}.
	\end{thm}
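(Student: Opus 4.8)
\textbf{Proof plan for Theorem \ref{larcon}.}

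The plan is to combine the uniform pointwise convergence of the integrands (Theorem \ref{limitfhat}) with a $T$-uniform integrable majorant (Theorem \ref{intnew1}) and apply the dominated convergence theorem on $t\in[1,\infty)$. First I would fix the metric $g^{TS}$ on $S$ and work with the $L^\infty$-norm on $\Omega(S)$ from Theorem \ref{limitfhat}. The three quantities to be compared are all integrals over $t\in[1,\infty)$ of
\[
f_{\la}^{\wedge}(\cdot,\cdot)-\frac{\chi(\cdot)\dim Z-2d(\cdot)}{4}f'\!\left(\tfrac{i\sqrt t}{2}\right),
\]
so the argument reduces to an interchange of $\lim_{T\to\infty}$ and $\int_1^\infty(\cdot)\,\tfrac{dt}{t}$. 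For the first equality, note that $d(\Omega_{\sm}(\bm,\F)(T))$ is an integer-valued quantity, and by Theorem \ref{eigencon} the number of small eigenvalues (those in $[0,\delta]$) of $\Delta_T$ stabilizes for $T$ large; hence $d(\Omega_{\sm}(\bm,\F)(T))$ is eventually constant in $T$, and one checks (this is part of what \cref{lastreal} or the Mayer--Vietoris sequence gives) that its stable value equals $\sum_{i=1}^2\chi'(Z_i,F_i)$, so the anomaly terms match in the limit. The same remark applies to the weighted pieces $\tC_{t,T,i}$, since $\tD_{T,i}$ is conjugate to a Witten Laplacian on $\bz_i$ whose small eigenvalues also stabilize.

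For the interchange of limit and integral I would split $[1,\infty)$ as $[1,A]\cup[A,\infty)$. On the tail $[A,\infty)$, the bound $|f_{\la}^{\wedge}(C_{t,T}',h^E)|\le C(T)/\sqrt t$ from Theorem \ref{intnew1} is not directly usable because $C(T)$ may depend on $T$; instead I invoke the $(T,\s)$-independent majorant $G(t)$ described in the remark after Theorem \ref{intnew1}, with $t^{-1}G(t)\in L^1([1,\infty))$, together with the fact that $f'(i\sqrt t/2)$ decays like $e^{-t/4}$ times a polynomial, so $|f'(i\sqrt t/2)|/t$ is integrable on $[1,\infty)$ with a bound uniform in $T$ (the coefficients $\chi,d$ being uniformly bounded integers). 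Thus the integrand on $[1,\infty)$ is dominated by a fixed $L^1(dt/t)$ function independent of $(T,\s)$. On the compact part $[1,A]$, Theorem \ref{limitfhat} gives $\|f_{\la}^{\wedge}(C_{t,T}',h^E)-\sum_i f_{\la}^{\wedge}(C_{t,i}',h^E)\|_{L^\infty}\to 0$ for each fixed $t$; combined with the same uniform majorant this upgrades (by dominated convergence again) to convergence of $\int_1^A$. Letting $A\to\infty$ and using the uniform tail estimate to control the remainder gives
\[
\lim_{T\to\infty}\mathcal{T}_{\la}^{\mL}(T^H\bm,g^{T\bz},h^{\F})(T)=\sum_{i=1}^2\mathcal{T}_{\la,i}^{\mL}(T^HM_i,g^{TZ_i},h^{F_i}),
\]
all limits in the $L^\infty$-topology on $\Omega(S)/$(fixed finite-dimensional considerations), i.e.\ uniformly in $\s\in S$.

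The middle equality, $\lim_{T\to\infty}\sum_i\mathcal{T}_{\la,i}^{\mL}(T^H\bm_i,g^{T\bz_i},h_T^{\F_i})(T)=\sum_i\mathcal{T}_{\la,i}^{\mL}(T^HM_i,g^{TZ_i},h^{F_i})$, I would treat by exactly the same scheme applied on each piece $\bm_i$ separately: the manifold $\bm_i$ is $M_i$ with a collar $[-1,0]\times Y$ (resp. $[0,1]\times Y$) attached, the weight $e^{-2p_T}$ concentrates the mass away from the interior of the collar as $T\to\infty$ (by property (c) of $p_T$, $|p_T'|\gtrsim T||s|-1|$), and the relevant eigenvalue convergence is again Theorem \ref{eigencon} (the statement $\lim_{T\to\infty}\tl_k(T,\s)=\l_k(\s)$). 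One needs the analogue of Theorem \ref{limitfhat} for $\tC_{t,T,i}'$, which is the content of the first equality chain in Theorem \ref{limitfhat} itself ($\lim_{T\to\infty}\sum_i f_{\la}^{\wedge}(\tC_{t,T,i}',h_T^\bE)=\sum_i f_{\la}^{\wedge}(C_{t,i}',h^E)$), plus the analogue of the $G(t)$-majorant for the boundary pieces (which follows from the same parametrix/Duhamel estimates that produce $G(t)$ on $\bm$, since near the collar the weighted operators on $\bm$ and on $\bm_i$ agree).

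The main obstacle I anticipate is \emph{not} the dominated convergence bookkeeping but securing the $(T,\s)$-uniform majorant $G(t)$ on $[1,\infty)$ for $|f_{\la}^{\wedge}(C_{t,T}',h^E)|$ — i.e.\ showing that after subtracting the small-eigenvalue contribution the large-time behavior of $\Tr_s(N^Z f'(D_{t,T})/2)$ is controlled independently of how $T$ scales. This is where the spectral gap above $\delta$ (uniform in $\s$ by Theorem \ref{eigencon}) and the coupling technique of \cite[\S 7.1]{Yantorsions} enter; the delicate point is that $D_{t,T}$ involves $\sqrt t\, D^\bz_T$ and $-\tfrac{1}{2\sqrt t}c(T)$, and for the degree-$0$ component one must relate $\exp(-tD_T^2)$ restricted to the large-eigenvalue subspace to a heat kernel estimate that does not degenerate as $T\to\infty$; once that function $G(t)$ is in hand the rest is the routine three-part splitting above. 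I would therefore devote the bulk of \cref{eigen} (where Theorems \ref{eigencon}, \ref{limitfhat}, \ref{larcon} are proved) to establishing this uniform estimate, and only then assemble the dominated-convergence argument.
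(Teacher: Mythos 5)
Your proposal matches the paper's proof in essence: both rely on the $(T,\s)$-independent majorant $G(t)$ with $G(t)/t\in L^1([1,\infty))$ (which the paper obtains from Lemma \ref{est1}, Lemma \ref{fhat1}, Theorem \ref{eigencon} and \cite[Theorem 3.2]{Yantorsions}) combined with the pointwise-in-$t$ uniform convergence of Theorem \ref{limitfhat}, and then apply dominated convergence on $[1,\infty)$ for each of the two limits. Your extra remark that $d(\Omega_{\sm}(\bm,\F)(T))$ stabilizes to $\sum_i\chi'(Z_i,F_i)$ (so the anomaly-correction terms match) is correct and is implicit in the paper via the short exact sequence (\ref{short}); otherwise the route is the same.
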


	\begin{thm}\label{main}
		\[\mathcal{T}_{\la}^{\mS}\left(T^H \bm, g^{T \bz}, h^{\F}\right)(T)=\sum_{i=1}^2\mathcal{T}_{\la,i}^{\mS}\left(T^H M_i, g^{T Z_i}, h^{F_i}\right)-(T-\log(2))\chi(Y)\rank(F)/2+o(1),\]
		\[\mathcal{T}_{\la,i}^{\mS}\left(T^H \bm_i, g^{T \bz_i}, h_T^{\F_i}\right)(T)=\mathcal{T}_{\la,i}^{\mS}\left(T^H M_i, g^{T Z_i}, h^{F_i}\right)-T\log(2)\chi(Y)\rank(F)/4+o(1).\]
		as $T\to\infty.$
		
		As a result,
		\[\mathcal{T}_{\la}^{\mS}\left(T^H \bm, g^{T \bz}, h^{\F}\right)(T)=\sum_{i=1}^2\mathcal{T}_{\la,i}^{\mS}\left(T^H \bm_i, g^{T \bz_i}, h^{\F_i}_T\right)(T)+\log(2)\chi(Y)\rank(F)/2+o(1).\]
		The limit is taken w.r.t. the topology described in Theorem \ref{limitfhat}.
	\end{thm}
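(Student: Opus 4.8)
\textbf{Proof proposal for Theorem \ref{main}.}

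The plan is to analyze the small-time integral $\mathcal{T}_{\la}^{\mS}(T^H\bm, g^{T\bz}, h^{\F})(T) = -\int_0^1 (f_{\la}^\wedge(C_{t,T}', h^{\bE}) - c_T f'(i\sqrt{t}/2))\,\frac{dt}{t}$, where $c_T := (\chi(\bz,\F)\dim Z - 2d(\Omega_{\sm}(\bm,\F)(T)))/4$, by splitting the manifold $\bm$ along $N$ and localizing the fiberwise heat-kernel-type quantity $f_{\la}^\wedge(C_{t,T}', h^{\bE})$ near and away from the hypersurface. Away from $N$ (on $M_1$ and $M_2$, where $p_T$ is locally constant), the Witten deformation has no effect on the local operator, so the local contribution matches $\sum_i f_{\la}^\wedge(C_{t,i}', h^{E_i})$ up to exponentially small (in $T$) error after imposing the correct absolute/relative boundary conditions; this is the mechanism behind observation \eqref{ob1} and Theorem \ref{eigencon}. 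The genuinely new term comes from the neck $[-1,1]\times Y$ where $p_T(s,y) \approx \pm T(s\mp1)^2/2 \pm T/2$ carries a growing gradient; here the Witten deformation concentrates the small eigenforms and produces a model problem on the half-line (or on $\R$, after rescaling $s$ near $\pm1$ by $\sqrt{T}$) with potential $\sim T|s|^2$. I would compute the local index density / supertrace contribution of this harmonic-oscillator-type model explicitly; it should contribute a term proportional to $\chi(Y)\rank(F)$ with a $T$-linear coefficient (the $-T\chi(Y)\rank(F)/2$, resp. $-T\log 2\,\chi(Y)\rank(F)/4$, term) together with the $\log 2$ correction coming from comparing the model's zeta-regularized determinant against the product structure.

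The steps, in order: (i) use the product structure near $N$ to write $\Delta_T$ on a collar as a sum of a tangential Laplacian on $Y$ and a one-dimensional Witten Laplacian $-\partial_s^2 + |p_T'(s)|^2 \pm p_T''(s)$ with suitable boundary conditions, and likewise decompose $\tilde\Delta_{T,i}$ on $\bm_i$; (ii) via a finite-propagation-speed / Duhamel argument, show that for $t\in(0,1]$ the difference $f_{\la}^\wedge(C_{t,T}', h^{\bE}) - \sum_i f_{\la}^\wedge(\tilde C_{t,T,i}', h_T^{\bE_i})$ is governed by the neck region alone, uniformly in $\s$; (iii) rescale in the neck and identify the limiting model operator — a harmonic oscillator twisted by $\Omega(Y,F)$ — and compute its contribution to $\varphi\Tr_s(\tfrac{N^Z}{2}f'(D_{t,T}))$; (iv) integrate the resulting local density against $dt/t$ on $(0,1]$, carefully tracking the divergent-as-$t\to0$ piece which is absorbed by the counterterm $c_T f'(i\sqrt t/2)$ (note $d(\Omega_{\sm}(\bm,\F)(T))$ itself depends on $T$, so the counterterm subtraction must be done before taking $T\to\infty$); (v) separately, run the same analysis for each $\bm_i$ with the weighted metric $h_T^{\F_i}$ to get the second displayed formula, then subtract to get the third. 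The Mayer--Vietoris sequence \eqref{exactse} and Theorem \ref{int6p} (relating $\Omega_{\sm}(\bm,\F)(T)$ to the torsion of \eqref{exactse}) will enter when reconciling the $d(\Omega_{\sm})$ terms, but for Theorem \ref{main} as stated they can be left implicit since the counterterms match on both sides.

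The main obstacle I anticipate is step (iv): controlling the $t\to0$ behavior of the neck contribution \emph{uniformly in $T$} while simultaneously extracting the exact constant $T\log 2\,\chi(Y)\rank(F)/4$. The coefficient $\log 2$ is not topological — it is the shadow of $\zeta'(0)$-type data for the half-line Witten model, and the same constant famously appears in Brüning--Ma's anomaly formula and in Lott's and Zhu's gluing computations. I would need either an explicit Mellin-transform computation of the one-dimensional model's torsion (computing $\int_0^\infty (\Tr_s(Nf'(D_{t,T})) - \text{asymptotics})\,\frac{dt}{t}$ for the oscillator on $[0,\infty)$ with Dirichlet versus Neumann conditions and reading off the difference), or a comparison argument reducing it to a known one-variable identity. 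The product-type hypothesis on $g^{TZ}, h^F, T^HM$ near $N$ is essential here: it is what makes the neck model exactly solvable and the tangential/normal separation exact, so I would invoke it explicitly at step (i) and not attempt to relax it. Uniformity in $\s\in S$ throughout follows because all estimates depend only on the (compact, hence bounded) geometry of $Y$ and the universal constants $\Cn_i, \cn_i$ in the definition of $p_T$, never on the base point.
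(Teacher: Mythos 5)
Your proposal follows essentially the same approach as the paper: heat-kernel localization (Duhamel/off-diagonal decay) to isolate the neck $[-1,1]\times Y$, product decomposition into a tangential piece on $Y$ tensored with a one-dimensional Witten operator in the normal variable, and a separate 1D model computation that produces the $T$-linear and $\log 2$ terms. Two tactical differences are worth noting. First, the paper does \emph{not} rescale by $\sqrt{T}$ to a harmonic oscillator: it compares the exact 1D operators $\Delta_T^\R$ on $[-2,2]$ and $\Delta_{B,i}^\R$ on $[\pm 1,\pm 2]$ directly, and exploits the oddness of $p_T$ to get $\Tr_s\bigl((1+2\Delta_T^\R)e^{-t\Delta_T^\R}\bigr)=0$, which is cleaner and avoids the bookkeeping issues a rescaling would introduce (in particular, recovering the exact $T$-linear coefficient from a rescaled limit is delicate). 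Second, your proposal treats the whole computation as one local-density analysis, but the paper makes a structurally important split into the degree-$0$ component of the form (the fiberwise analytic torsion), which carries \emph{all} of the $T$-linear and $\log 2$ correction and is handled by citing Theorem 3.3 of the author's prior paper, and the positive-degree components, which have a clean limit via dominated convergence after the localization estimate plus the $O(t)$ bound (\ref{eq100}) on the small-eigenvalue contribution. If you don't make this split explicit, you risk missing that the nontrivial constants live only in degree $0$; your step (iv) as written would have to reproduce exactly this separation to extract the coefficient correctly.
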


	Next, we have the following Mayer-Vietoris exact sequence (c.f. \cite[(0.16)]{bruning2013gluing}) of vector bundles over $S$:
	\be\label{mv}
	\mathcal{MV}:\cdots \stackrel{\p_{k-1}}\rightarrow H_{\text {rel }}^k\left(\bz_2;\F\right)  \stackrel{e_k}{\rightarrow}H^k\left(\bz;\F\right)  \stackrel{r_k}{\rightarrow} H_{\mathrm{abs}}^k\left(\bz_1;\F\right)  \stackrel{\p_k}{\rightarrow} \cdots.
	\ee
	\def\tO{{\tilde{\Omega}}}
	\def\tp{{\tilde{\p}}}
	\def\tP{{\tilde{\mathcal{P}}}}
	\def\tir{{\tilde{r}}}
	\def\te{{\tilde{e}}}
	\def\mv{\mathcal{MV}}
	Let $\tO_{\sm}(\bz,\F)(T)$ be the space generated by eigenforms of $\tD_T$ for eigenvalues inside $[0,\delta]$, then by our discussion above in \cref{witwei}, $$\tO_{\sm}(\bz,\F)(T)=e^{p_T}\Omega_{\sm}(\bz,\F)(T).$$ And $\tP^\delta(T)=e^{p_T}\P^{\delta}(T)e^{-p_T}$ is the orthogonal projection w.r.t. $ \tO_{\sm}(\bz,\bar{F})(T)$.
	Let $\H(\bz;\F)(T):=\ker(\tD_{T})$, and $\H(\bz_i;\F_i)(T):=\ker(\tD_{T,i})$. 
	We also have the following Mayer-Vietoris exact sequence induced by Hodge theory and (\ref{mv})
	\be\label{mvp}
	\mathcal{MV}(T):\cdots \stackrel{\p_{k-1,T}}\rightarrow \H^k\left(\bz_2;\F_2\right)(T)  \stackrel{e_{k,T}}{\rightarrow}\H^k\left(\bz;\F\right)(T)  \stackrel{r_{k,T}}{\rightarrow} \H^k\left(\bz_1;\F_1\right)(T)  \stackrel{\p_{k,T}}{\rightarrow} \cdots 
	\ee
	with metric and flat connections induced by Hodge theory. Let $\mathcal{T}(T)$ be the analytic torsion form for this complex.
	
	For any $L^2$ differential form $w$ on $Z_i$ (or $\bz_i$), let $\cE(w)$ be the extension of $w$, s.t. outside $Z_i$ (or $\bz_i$), $\cE(w)=0.$ We will not distinguish $w$ and $\cE(w)$ if the context is clear.
	
	\subsection{A short exact sequence induced by small eigenvalues}
	Next, when $T$ is large enough, we have a sequence of morphism of vector bundles
	\be\label{short}0\to \H^k(\bz_2,\F_2)(T)\stackrel{\te_{k,T}}\rightarrow \tO_{\sm}^k(\bz,\F)(T)\stackrel{\tir_{k,T}}\rightarrow\H^k(\bz_1,\F_1)(T)\to0.\ee
	
	Here 
	$$\te_{k,T}(u):=\tP^{\delta}(T)\cE(u),\forall u\in \H^k(\bz_2,\F_2)(T);$$ 
	$$\tir_{k,T}(u):=\tP_{1}(T)(u|_{\bz_1}),\forall u\in\tO_{\sm}^k(\bz,\F)(T).$$
	Recall that $\tP_{i}(T)$ is the orthogonal projection  w.r.t. $\ker(\tilde\Delta_{T,i})$ ($i=1,2$). 
	

	Let $\eta \in C_c^{\infty}([0,1])$, such that $\eta|_{[0,1 / 4]} \equiv 0,\left.\eta\right|_{[1 / 2,1]} \equiv 1$.  Let $Q_T: \Omega_{\mathrm{abs}}\left(Z_1 ; F_1\right) \oplus \Omega_{\mathrm{rel}}\left(Z_2 ; F_2\right) \rightarrow W^{1,2}\Omega(\bar{Z}, \bar{F})$ be
	$$
	Q_T(u)(x):=\left\{\begin{array}{l}
		u_i(x), \text { if } x \in Z_i ,\\
		\eta(-s) u_1(-1, y) e^{-p_T(s)-T / 2}, \text { if } x=(s, y) \in[-1,0] \times Y, \\
		\eta(s) u_2(1, y) e^{p_T(s)-T / 2}, \text { if } x=(s, y) \in[0,1] \times Y ,
	\end{array}\right.
	$$
	for $u=\left(u_1, u_2\right)\in \Omega_{\mathrm{abs}}\left(Z_1 ; F_1\right) \oplus \Omega_{\mathrm{rel}}\left(Z_2 ; F_2\right)$.
	Suppose $u_i=\a_i+\b_i\wedge ds(i=1,2)$ on the tube $[-2,2]\times Y$, moreover, $D^{\bz}_T$ is decomposed as $D_T^\R+D^Y$, where $D_T^\R=ds\wedge\nabla_{\frac{\p}{\p s}}-i_{\frac{\p}{\p s}}\nabla_{\frac{\p}{\p s}}.$
	It follows from the construction of $Q_T$ and the boundary conditions that ,\begin{equation}\label{qtdt}D^{\bz}_TQ_T(u)(x)=\begin{cases}D^{Z_i}u_i, \mbox{ if }x\in Z_i;\\
			D^Y\a_1(-1,y)e^{-f_T-T/2},\mbox{ if } x=(s,y)\in[-1,-1/2]\times Y; \\
			D^Y\b_2(1,y)\wedge ds e^{f_T-T/2}, \mbox{ if }x=(s,y)\in[1/2,1]\times Y.
	\end{cases}\end{equation}
	
	Set $\Q_T:=e^{p_T}Q_T.$ 
	One can see that
	\begin{prop}\label{last1p} For $u \in \operatorname{ker}\left(\Delta_1\right) \oplus \operatorname{ker}\left(\Delta_2\right)$,
		$$
		\begin{array}{c}
			\left\|Q_T u-\cE(u)\right\|_{L^2}^2 \leq\frac{C}{\sqrt{T}}\|u\|^2_{L^2}, \\
			\left\|\mathcal{P}^\delta(T) Q_T(u)-\cE(u)\right\|_{L^2}^2 \leq \frac{C\|u\|_{L^2}^2}{\sqrt{T}}.
		\end{array}
		$$
		for some constant $C$ that is independent of $T$. 
		
		As a result,
		$$
		\begin{array}{c}
			\left\|\Q_T u-\cE(e^{p_T}u)\right\|_{L^2,T}^2 \leq\frac{C}{\sqrt{T}}\|e^{p_T}u\|^2_{L^2,T}, \\
			\left\|\tP^\delta(T) \Q_T(u)-\cE(e^{p_T}u)\right\|_{L^2,T}^2 \leq \frac{C\|e^{p_T}u\|_{L^2,T}^2}{\sqrt{T}}.
		\end{array}
		$$
		Recall that $\|\cdot\|_{L^2,T}$ is the norm induced by $g^{T\bz}$ and $h^{\F}_T:=e^{-2p_T}h^{\F}.$
		
		As a result, when $T$ is large enough, $\tP^\delta(T) \Q_T(u)$ spans $\tO_{s m}(\bar{Z}, \bar{F})(T)$ for $u \in \operatorname{ker}\left(\Delta_1\right) \oplus \operatorname{ker}\left(\Delta_2\right)$.
	\end{prop}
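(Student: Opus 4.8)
The plan is to exploit that $Q_Tu-\cE(u)$ is supported in the gluing collar $[-1,1]\times Y$, and that the interpolating weight $e^{-p_T-T/2}$ is tuned both to decay like a Gaussian of width $\sim T^{-1/2}$ near the tips $s=\pm1$ and to annihilate the $dp_T\wedge$ term inside $d^{\bz}_T$. All constants below will be independent of $T$ and $\s$; at the end this follows from the compactness of $S$ together with the uniform spectral gap of \cref{eigencon}. Only the $[-1,0]\times Y$-half of the collar needs discussion, the other being symmetric since $p_T$ is odd.

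\textbf{The first inequality.} Since $Q_Tu=\cE(u)$ on $Z_1\cup Z_2$, the difference lives on the collar, where on $[-1,0]\times Y$ it equals $\eta(-s)\,u(-1,\cdot)\,e^{-p_T(s)-T/2}$. I would first record, using property (b) and oddness, that on the support of $\eta(-s)$ intersected with $\{1+s\ge\tfrac34 e^{-T^2}\}$ one has $-2p_T(s)-T=-T(s+1)^2$, whereas the remaining interval near $s=-1$ has length $\le\tfrac34 e^{-T^2}$ and carries weight $\le1$. Hence
\[
\big\|Q_Tu-\cE(u)\big\|_{L^2([-1,0]\times Y)}^2\le\Big(\int_0^\infty e^{-Tv^2}\,dv+\tfrac34 e^{-T^2}\Big)\int_Y|u(-1,y)|^2\,dy\le\frac{C}{\sqrt T}\,\|u_1\|_{L^2(Z_1)}^2,
\]
where the boundary-trace bound $\int_Y|u(-1,\cdot)|^2\le C\|u_1\|_{L^2(Z_1)}^2$ comes from elliptic regularity for the harmonic form $u_1$ (uniform in $\s$ by compactness of $S$). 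Adding the two collar contributions and using $\|\cE(u)\|_{L^2}^2=\|u_1\|_{L^2(Z_1)}^2+\|u_2\|_{L^2(Z_2)}^2=\|u\|_{L^2}^2$ gives the first bound.

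\textbf{The second inequality, and the main obstacle.} I would write $\P^\delta(T)Q_Tu-\cE(u)=-(\Id-\P^\delta(T))Q_Tu+(Q_Tu-\cE(u))$; the last term is already controlled, so it remains to bound $(\Id-\P^\delta(T))Q_Tu$. By \cref{eigencon}, for $T$ large every eigenvalue of $\Delta_T$ outside $[0,\delta]$ is $\ge\delta$ uniformly in $\s$, so the spectral gap gives $\delta\,\|(\Id-\P^\delta(T))Q_Tu\|_{L^2}^2\le\|D^{\bz}_T(\Id-\P^\delta(T))Q_Tu\|_{L^2}^2$, and since $D^{\bz}_T$ commutes with $\P^\delta(T)$ this is $\le\|D^{\bz}_TQ_Tu\|_{L^2}^2$. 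Everything thus reduces to $\|D^{\bz}_TQ_Tu\|_{L^2}^2\le\tfrac{C}{\sqrt T}\|u\|_{L^2}^2$; this is the crux. On $Z_1\cup Z_2$ the left side vanishes ($p_T$ is constant there and $u_i$ is closed and coclosed), and on $[-1,0]\times Y$, writing $Q_Tu=\psi(s)w$ with $\psi(s)=\eta(-s)e^{-p_T(s)-T/2}$ and $w=u(-1,\cdot)$, the product-metric computation yields
\[
D^{\bz}_TQ_Tu=\big(\psi'+p_T'\psi\big)\,ds\wedge w+\psi\,\big(d^Yw+d^{Y,*}w\big)=-\eta'(-s)\,e^{-p_T(s)-T/2}\,ds\wedge w+\psi\,\big(d^Yw+d^{Y,*}w\big),
\]
the decisive point being the exact cancellation $\psi'+p_T'\psi=-\eta'(-s)e^{-p_T-T/2}$. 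Since $\eta'$ is supported where $e^{-p_T(s)-T/2}=e^{-T(s+1)^2/2}\le e^{-T/8}$, the first term is exponentially small, while $\int_{-1}^0\psi^2\le C/\sqrt T$ and the elliptic bound $\|d^Yw\|_{L^2(Y)}^2+\|d^{Y,*}w\|_{L^2(Y)}^2\le C\|u_1\|_{L^2(Z_1)}^2$ bound the second term by $\tfrac{C}{\sqrt T}\|u_1\|^2$. Summing over the two collars closes the estimate. The genuinely load-bearing step is this computation — establishing the cancellation, and the Gaussian bookkeeping around $s=\pm1$ (verifying that the $\rho$-cutoff, active only on an interval of length $\lesssim e^{-T^2}$, is harmless, and that $Q_Tu\in\Dom(D^{\bz}_T)$); the rest is elliptic regularity and the gap from \cref{eigencon}.

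\textbf{Weighted versions and spanning.} Because $h^{\F}_T=e^{-2p_T}h^{\F}$, multiplication by $e^{p_T}$ is an isometry from $\big(L^2\Omega(\bz,\F),\|\cdot\|_{L^2}\big)$ onto $\big(L^2\Omega(\bz,\F),\|\cdot\|_{L^2,T}\big)$, and $\tP^\delta(T)=e^{p_T}\P^\delta(T)e^{-p_T}$, $\Q_T=e^{p_T}Q_T$, $\cE(e^{p_T}u)=e^{p_T}\cE(u)$; conjugating the two inequalities above by $e^{p_T}$ yields the weighted ones verbatim. For the final assertion, \cref{eigencon} gives $\dim\Omega_{\sm}(\bz_\s,\F_\s)(T)=\dim\ker\Delta_1(\s)+\dim\ker\Delta_2(\s)$ for $T$ large, uniformly in $\s$, so $u\mapsto\P^\delta(T)Q_Tu$ is a morphism of vector bundles between fibres of the same finite dimension; it is injective for $T$ large since $\|\P^\delta(T)Q_Tu\|_{L^2}\ge\|\cE(u)\|_{L^2}-\|\P^\delta(T)Q_Tu-\cE(u)\|_{L^2}\ge(1-CT^{-1/4})\|u\|_{L^2}$, hence bijective, hence surjective; conjugating once more by $e^{p_T}$ shows that $\tP^\delta(T)\Q_T(u)$, as $u$ runs over $\ker(\Delta_1)\oplus\ker(\Delta_2)$, spans $\tO_{\sm}(\bz,\F)(T)$.
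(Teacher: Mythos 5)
Your proof is correct and follows the same route as the paper: control $\|Q_Tu-\cE(u)\|_{L^2}$ and $\|D^{\bz}_TQ_Tu\|_{L^2}$ via boundary trace/elliptic estimates together with the Gaussian decay of the interpolating weight, then use the spectral gap $\delta\|(1-\P^{\delta}(T))Q_Tu\|^2_{L^2}\leq\|D^{\bz}_TQ_Tu\|^2_{L^2}$, and conjugate by $e^{p_T}$ for the weighted statements. You usefully make explicit the cancellation $\psi'+p_T'\psi=-\eta'(-s)e^{-p_T-T/2}$ that the paper leaves implicit in ``a straightforward computation.''
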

	\begin{proof} For $u=(u_1,u_2) \in \operatorname{ker}\left(\Delta_1\right) \oplus \operatorname{ker}\left(\Delta_2\right)$, set $u_T=\mathcal{P}^\delta(T) Q_T(u), v_T=Q_T(u)-u_T$.
		First, by \cite[Lemma 4.2]{Yantorsions} and the compactness of $S$,
		\begin{align}\begin{split}\label{6}
				&\ \ \ \ \int_Y\left|u_i\left((-1)^i, y\right)\right|^2+\left|D^{\bz}_T u_i\left((-1)^i, y\right)\right|^2 \operatorname{dvol}_Y \leq C \int_{Z_i}\left|u_i\right|^2 \mathrm{dvol}.
		\end{split}\end{align}
		for some constants $C, C^{\prime}$ that doesn't depends on $T$ and $\s$.
		By (\ref{6}) and a straightforward computation, one can see that
		\be\label{000}
		\left\|Q_T u-\cE(u)\right\|_{L^2}^2 \leq\frac{C}{\sqrt{T}}\|u\|^2_{L^2}
		\ee
		and
		\be\label{8}
		\left\|D^{\bz}_T Q_T u\right\|_{L^2}^2 \leq \frac{C}{\sqrt{T}}\|u\|^2_{L^2} .
		\ee
		
		Moreover,
		\be\label{9}
		\delta\left\|v_T\right\|_{L^2}^2 \leq\left\|D^{\bz}_T v_T\right\|_{L^2}^2 \leq\left\|D^{\bz}_T Q_T u\right\|_{L^2}^2 .
		\ee
		(\ref{8}) and (\ref{9}) then imply that
		$$
		\left\|v_T\right\|_{L^2}^2 \leq \frac{C}{\delta \sqrt{T}}\|u\|_{L^2},
		$$
		i.e.,
		\be\label{001}
		\left\|\mathcal{P}^\delta(T) Q_T(u)-Q_T(u)\right\|_{L^2}^2 \leq \frac{C\|u\|_{L^2}^2}{\delta \sqrt{T}} .
		\ee
		The proposition then follows form (\ref{000}) and (\ref{001}).
	\end{proof}
	
	Notice that if $u\in\Omega_{\bd}(Z_i,F_i)$, then $\Q_Tu\in\Omega_{\bd}(\bz_i,\F_i)$. By Hodge theory and Theorem \ref{eigencon}, when $T$ is big enough, all eigenvalues of $\tD_{T,i}$ inside $[0,\delta]$ must be $0.$ Let $\tP_i(T)$ be the orthogonal projection w.r.t. $\ker(\tD_{T,i})$. Similarly, one has
	\begin{prop} \label{last2p}For $u \in \operatorname{ker}\left(\Delta_i\right)$,
		$$
		\begin{array}{c}
			\left\|\Q_T u-\cE(e^{p_T}u)\right\|_{L^2,T}^2 \leq\frac{C}{\sqrt{T}}\|e^{p_T}u\|^2_{L^2,T}, \\
			\left\|\tP_{i}(T) \Q_T(u)-\cE(e^{p_T}u)\right\|_{L^2,T}^2 \leq \frac{C\|e^{p_T}u\|_{L^2,T}^2}{\sqrt{T}}.
		\end{array}
		$$
		As a result, when $T$ is large enough, $\tP_i(T) \Q_T(u)$ spans $\H(\bar{Z}_i, \bar{F}_i)(T)$ for $u \in \operatorname{ker}\left(\Delta_i\right)$.
	\end{prop}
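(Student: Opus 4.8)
\textbf{Proof proposal for Proposition \ref{last2p}.}

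The plan is to follow, \emph{mutatis mutandis}, the proof of Proposition \ref{last1p}, restricting attention to a single component $\bz_i$ (equipped with the appropriate absolute or relative boundary condition) rather than the whole $\bz$. The point is that all the estimates in Proposition \ref{last1p} are local near the cutting hypersurface and are insensitive to whether we work on $\bz$ or on $\bz_i$; the only genuinely new input needed is that the boundary conditions are preserved by $\Q_T$ and that Hodge theory applies verbatim on each piece with boundary.

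First I would record that for $u\in\Omega_{\bd}(Z_i,F_i)$ one has $\Q_Tu\in\Omega_{\bd}(\bz_i,\F_i)$: this is immediate from the explicit formula for $Q_T$, since on the collar $[-1,0]\times Y$ (resp.\ $[0,1]\times Y$) the form $Q_T(u)$ is $\eta(\pm s)u(\pm1,y)e^{\mp p_T(s)-T/2}$ — a multiple of the boundary value of $u$ — so the $i_{\p/\p s}$-type (resp.\ $ds\wedge$-type) conditions defining $\Omega_{\abs}$ (resp.\ $\Omega_{\rel}$) are inherited from those satisfied by $u$ on $\p Z_i$, and likewise for the first-order conditions after applying $d^{\bz_i}$ (resp.\ $\delta_T^{\bz_i,*}$). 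Next, exactly as in \eqref{6}, the trace theorem together with Gårding's inequality for the boundary-value problem on $Z_i$ gives $\int_Y|u((-1)^i,y)|^2+|\nabla u((-1)^i,y)|^2\,\dvol_Y\leq C'\int_{Z_i}|u|^2\,\dvol$ with $C'$ independent of $T$ and $\s$ (here one uses elliptic regularity up to the boundary for $\Delta_i$, valid because $u\in\ker\Delta_i$). From this, the same straightforward collar computation as in Proposition \ref{last1p} yields
\[
\|\Q_T u-\cE(e^{p_T}u)\|_{L^2,T}^2\leq\frac{C}{\sqrt{T}}\|e^{p_T}u\|_{L^2,T}^2,\qquad \|\tilde{D}^{\bz_i}_T\Q_T u\|_{L^2,T}^2\leq\frac{C}{\sqrt{T}}\|e^{p_T}u\|_{L^2,T}^2,
\]
where I have used $\tD_{T,i}=e^{p_T}\Delta_{T,i}e^{-p_T}$ to translate the unweighted estimates on $D^{\bz_i}_TQ_Tu$ into weighted estimates on $\tilde{D}^{\bz_i}_T\Q_Tu$, just as in the ``As a result'' part of Proposition \ref{last1p}. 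Finally, by Hodge theory $\ker(\tD_{T,i})\cong H_{\bd}(\bz_i;\F_i)(T)$ and, by Theorem \ref{eigencon} applied to the operators $\tilde\Delta_{T,i}(\s)$ (whose spectra converge to that of $\Delta_i(\s)$ uniformly in $\s$), for $T$ large every eigenvalue of $\tD_{T,i}$ in $[0,\delta]$ equals $0$; hence the spectral gap estimate $\delta\|v_T\|_{L^2,T}^2\leq\|\tilde{D}^{\bz_i}_Tv_T\|_{L^2,T}^2\leq\|\tilde{D}^{\bz_i}_T\Q_Tu\|_{L^2,T}^2$ holds for $v_T=\Q_Tu-\tP_i(T)\Q_Tu$, and combined with the previous display gives $\|\tP_i(T)\Q_T(u)-\cE(e^{p_T}u)\|_{L^2,T}^2\leq C\|e^{p_T}u\|_{L^2,T}^2/\sqrt{T}$. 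The spanning statement then follows: the map $u\mapsto\tP_i(T)\Q_Tu$ from $\ker(\Delta_i)$ to $\H(\bz_i,\F_i)(T)$ is, for $T$ large, $O(T^{-1/4})$-close to an isometry (after the weighting), hence injective, and since $\dim\ker(\Delta_i)=\dim H_{\bd}(Z_i;F_i)=\dim H_{\bd}(\bz_i;\F_i)(T)=\dim\H(\bz_i,\F_i)(T)$ by homotopy invariance of cohomology (the collar extension does not change the topology), it is an isomorphism.

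I do not expect any serious obstacle here: the argument is a routine localization of Proposition \ref{last1p}. The one place warranting a line of care is the verification that $\Q_T$ respects the relative/absolute boundary conditions on the \emph{weighted} side — i.e.\ that the condition $ds\wedge\delta_T^{\bz_2,*}\omega=0$ on $\{0\}\times Y$ (which involves the deformed adjoint $\delta_T^{\bz_2,*}=e^{p_T}d^{\bz_2,*}_Te^{-p_T}$) is genuinely inherited; but since $p_T$ is a function of $s$ alone and is smooth across the collar, conjugation by $e^{\pm p_T}$ interacts with $ds\wedge$ and $i_{\p/\p s}$ in a controlled way, and the condition transfers without trouble. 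The uniformity in $\s\in S$ throughout is guaranteed by the product structure near $N$ and by the uniform statement of Theorem \ref{eigencon}.
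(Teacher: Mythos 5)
Your proposal is correct and follows essentially the same route the paper intends (the paper's text simply remarks that $\Q_T$ preserves $\Omega_{\bd}$ and that all eigenvalues of $\tD_{T,i}$ in $[0,\delta]$ vanish for $T$ large, then asserts the proposition as a "similarly" of Proposition \ref{last1p}). You correctly carry over the trace/G\aa rding estimate, the spectral-gap argument with $v_T=(1-\tP_i(T))\Q_T u$, the $e^{p_T}$-conjugation translating the unweighted bounds into $\|\cdot\|_{L^2,T}$-bounds, and the injectivity-plus-dimension-count argument for the spanning conclusion.
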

	
	\def\bE{{\bar{E}}}

	\begin{prop}\label{last4p1}
		$\te_{k,T}$ and $\tir^*_{k,T}$ are almost isometric embeddings as $T\to\infty$, where $\tir^*_{k,T}$ is the adjoint of $\tir_{k,T}$. That is, for example, for any $u\in \H^k(\bz_2;\F_2)(T)$, $\lim_{T\to\infty}\frac{\|\te_{k,T}u\|_{L^2,T}}{\|u\|_{L^2,T}}=1.$ 
	\end{prop}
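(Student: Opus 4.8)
The plan is to reduce everything to the $L^2$-estimates already established in Propositions \ref{last1p} and \ref{last2p}, which control how well the gluing map $\Q_T$ reproduces extensions of harmonic forms. First I would fix $u \in \H^k(\bz_2;\F_2)(T)$; by Proposition \ref{last2p} (applied with $i=2$ and the relative boundary condition), $u$ is, up to $O(T^{-1/2})$ in $\|\cdot\|_{L^2,T}$, of the form $\tP_2(T)\Q_T(u_0)$ for a unique $u_0 \in \ker(\Delta_2)$, and moreover $\|\Q_T u_0 - \cE(e^{p_T}u_0)\|_{L^2,T}^2 \leq \tfrac{C}{\sqrt T}\|e^{p_T}u_0\|_{L^2,T}^2$. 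The point of the calculation $\Q_T = e^{p_T}Q_T$ together with the explicit definition of $Q_T$ is that on $\bz_2$ the extension $\cE(e^{p_T}u_0)$ agrees with $e^{p_T}u_0$ while on the collar $[-1,0]\times Y$ it is cut off by $\eta(s)$ and weighted by $e^{p_T(s)-T/2}$, which decays like a Gaussian away from the boundary because of property (c)/(3) of $p_T$; so the collar contribution to $\|\cE(e^{p_T}u_0)\|_{L^2,T}^2 = \|e^{-p_T}\cE(e^{p_T}u_0)\|_{L^2}^2$ is $O(T^{-1/2})$ relative to the bulk. Hence $\|\cE(e^{p_T}u_0)\|_{L^2,T} = \|u_0\|_{L^2(Z_2)} + O(T^{-1/2})$.

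Next I would compute $\|\te_{k,T}u\|_{L^2,T}$. By definition $\te_{k,T}u = \tP^\delta(T)\cE(u)$; but $u = \tP_2(T)\Q_T(u_0) = \cE(e^{p_T}u_0) + O(T^{-1/2})$ inside $\bz_2$, and $\cE(u)$ extends this by zero outside $\bz_2$. Applying the second estimate in Proposition \ref{last1p} (with $u_0 \in \ker(\Delta_1)\oplus\ker(\Delta_2)$ supported in the $Z_2$-summand), $\tP^\delta(T)\Q_T(u_0) = \cE(e^{p_T}u_0) + O(T^{-1/2})$ as well, so $\|\te_{k,T}u\|_{L^2,T}$ and $\|u\|_{L^2,T}$ are both equal to $\|\cE(e^{p_T}u_0)\|_{L^2,T} + O(T^{-1/2}) = \|u_0\|_{L^2(Z_2)} + O(T^{-1/2})$. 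Taking the ratio gives $\lim_{T\to\infty}\|\te_{k,T}u\|_{L^2,T}/\|u\|_{L^2,T} = 1$, with the convergence uniform in $\s$ since all constants in Propositions \ref{last1p}, \ref{last2p} and Theorem \ref{eigencon} are $\s$-independent. For the statement about $\tir^*_{k,T}$: since $\tir_{k,T}(w) = \tP_{T,1}(w|_{\bz_1})$ is a restriction-then-project map, its adjoint is (extension by the $\Q_T$-type collar on the $\bz_1$-side, then project by $\tP^\delta(T)$); the same pair of estimates, now with $i=1$ and the absolute boundary condition, shows $\tir^*_{k,T}$ is almost isometric by the identical argument. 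One should also note the short exact sequence \eqref{short} is genuinely exact for large $T$ — a dimension count via the Mayer–Vietoris sequence \eqref{mv}/\eqref{mvp} together with $\dim\tO^k_{\sm} = \dim \H^k(\bz;\F)(T)$ and Theorem \ref{eigencon} — so that the statement is not vacuous.

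The main obstacle I anticipate is bookkeeping the weight $e^{p_T}$ carefully: one must verify that the $L^2$-norm computed with the $T$-twisted metric $h^{\F}_T = e^{-2p_T}h^{\F}$ on the cutoff collar really is lower order, and this rests on the precise shape of $p_T$ near $N$ — specifically that $p_T(s) - T/2 \approx -T s^2/2$ near $s=0$ (property (3)/(b)), so that $\int_{-1}^{0}\eta(-s)^2 e^{2(p_T(s)-T/2)}\,ds = O(T^{-1/2})$ by Laplace's method. A second, more subtle point is that Proposition \ref{last1p} is stated for $u \in \ker(\Delta_1)\oplus\ker(\Delta_2)$ with its \emph{fixed} $L^2$ metric, whereas $\te_{k,T}$ is defined on $\H^k(\bz_2;\F_2)(T) = \ker(\tilde\Delta_{T,2})$ with the $T$-twisted metric; reconciling these requires the conjugation relation $\tD_{T,i} = e^{p_T}\Delta_{T,i}e^{-p_T}$ from \cref{witwei} and the identification $\ker(\tilde\Delta_{T,i}) = e^{p_T}\ker(\Delta_{T,i})$ — but then one still needs that $\ker(\Delta_{T,i})$ itself is $O(T^{-1/2})$-close to $\ker(\Delta_i)$ after the $\Q_T$-gluing, which is exactly the content of Proposition \ref{last2p}, so the chain closes up. Everything else is a routine triangle-inequality estimate on ratios of norms that differ by $O(T^{-1/2})$.
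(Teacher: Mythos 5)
Your proposal follows essentially the same route as the paper: write $u = \tP_2(T)\Q_T(u_0)$ for $u_0\in\ker(\Delta_2)$, use Proposition~\ref{last2p} to compare $\|u\|_{L^2,T}$ with $\|e^{p_T}u_0\|_{L^2,T}$, use Proposition~\ref{last1p} together with $\|\tP^\delta(T)\|=1$ to compare $\|\tP^\delta(T)\cE(u)\|_{L^2,T}$ with the same quantity, and take the ratio. This is correct and matches the paper's argument for $\te_{k,T}$. One point worth tightening: for the $\tir^*_{k,T}$ part, you describe the adjoint heuristically as ``extension by the $\Q_T$-type collar on the $\bz_1$-side, then project by $\tP^\delta(T)$.'' The paper instead derives the \emph{exact} identity $\tir^*_{k,T}u = \tP^\delta(T)\cE(u)$ (extension by \emph{zero}, then project), by a one-line adjoint computation: for $v\in\tO_{\sm}(\bz,\F)(T)$ and $u\in\H^k(\bz_1,\F_1)(T)$, $(\tir_{k,T}v,u)_{L^2(\bz_1),T}=(v|_{\bz_1},u)_{L^2(\bz_1),T}=(v,\cE(u))_{L^2(\bz),T}=(v,\tP^\delta(T)\cE(u))_{L^2(\bz),T}$. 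Once you have this identity, $\tir^*_{k,T}$ has literally the same form as $\te_{k,T}$ (just on the $i=1$ side), and the ``identical argument'' you invoke truly is identical rather than approximately so. Your version still works since the $\Q_T$-collar and the zero extension are $O(T^{-1/2})$-close by Proposition~\ref{last1p}, but the exact formula is the cleaner and less error-prone step, and it is also needed later (it is referenced again in the proof of Lemma~\ref{last60}).
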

	\begin{proof}\ \\
		$\bullet$ \textit{ $\te_{k,T}$ is almost isometric.}
		
		For any $u\in \H^k(\bz_2,\F_2)(T)$, there exists $u_T\in \ker(\Delta_2)\cap\Omega^k_{\rel}(Z_2,F_2)$ such that
		$u=\tP_i(T) \Q_T(u_T)$, then by Proposition \ref{last2p}, \be\label{lasteq1}\|u\|^2_{L^2,T}\geq (1-\frac{C}{\sqrt{T}})\|e^{p_T}u_T\|^2_{L^2,T}.\ee
		
		While by Proposition \ref{last2p}, Proposition \ref{last1p} and the fact that $\|\tP^\delta(T)\|=1$,
		\begin{align}\label{lasteq2}
			\begin{split}
				&\ \ \ \ \|\te_{k,T}u\|^2_{L^2,T}=\|\tP^\delta(T)\cE u\|^2_{L^2,T}\\
				&\leq \|\tP^\delta(T) \Q_Tu_T\|^2_{L^2,T}+\frac{C\|e^{p_T}u_T\|^2_{L^2,T}}{\sqrt{T}}\\
				&\leq \|e^{p_T}u_T\|^2_{L^2,T}(1+\frac{C'}{\sqrt{T}})
			\end{split}
		\end{align}
		It follows from (\ref{lasteq1}) and (\ref{lasteq2}) that
		\[\limsup_{T\to\infty}\frac{\|\te_{k,T}u\|_{L^2,T}^2}{\|u\|_{L^2,T}}=1.\]
		Similarly, \[\liminf_{T\to\infty}\frac{\|\te_{k,T}u\|_{L^2,T}^2}{\|u\|_{L^2,T}}=1.\]
		$\bullet$ \textit{ $\tir^*_{k,T}$ is almost isometric.}\\
		For $u\in \H^k(\bz_1,\F_1)(T),$ we first show that \be\label{rstar}\tir^*_{k,T}u=\tP^{\delta}(T)\cE(u).\ee
		Just notice that for any $v\in \tO_{\sm}(\bz,\F)(T)$, 
		\begin{align*}
			(\tir_{k,T}v,u)_{L^2(\bz_2),T}=(v,\cE(u))_{L^2(\bz),T}=(v,\tP^{\delta}(T)\cE(u))_{L^2(\bz),T}.
		\end{align*}
		Following the same steps as above, one can show that $\tir^*_{k,T}$ is almost isometric. 
		

		
	\end{proof}
	\begin{thm}\label{exathm}
		With maps $\te_{k,T}$ and $\tir_{k,T}$ given above, the sequence (\ref{short}) is exact.
	\end{thm}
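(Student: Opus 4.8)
The plan is to verify exactness of \eqref{short} at each of its three spots, exploiting the fact — established in Propositions \ref{last1p} and \ref{last2p} — that $\tP^\delta(T)\Q_T$ and $\tP_i(T)\Q_T$ are near-isometric identifications of the small-eigenvalue spaces with the honest kernels on the two pieces. The first observation is a dimension count: since \eqref{main0}'s hypothesis is no longer assumed, one cannot argue by ranks alone, but one \emph{can} use the Mayer--Vietoris sequence \eqref{mv} together with Theorem \ref{eigencon}. By Theorem \ref{eigencon}, for $T$ large the ranks of $\H^k(\bz;\F)(T)$, $\H^k(\bz_1;\F_1)(T)$, $\H^k(\bz_2;\F_2)(T)$ equal $\dim H^k(\bz;\F)$, $\dim H^k_{\abs}(Z_1;F_1)$, $\dim H^k_{\rel}(Z_2;F_2)$ respectively, and similarly $\mathrm{rank}\,\tO^k_{\sm}(\bz,\F)(T)=\dim H^k(\bz;\F)$. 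Hence $\mathrm{rank}\,\tO^k_{\sm}(\bz,\F)(T)=\mathrm{rank}\,\H^k(\bz_2;\F_2)(T)+\mathrm{rank}\,\H^k(\bz_1;\F_1)(T)$ would \emph{not} follow in general — so instead I will prove directly that $\te_{k,T}$ is injective, $\tir_{k,T}$ is surjective, and $\im(\te_{k,T})=\ker(\tir_{k,T})$, and exactness will drop out without any global dimension identity.

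The three verifications go as follows. \emph{Injectivity of $\te_{k,T}$} is immediate from Proposition \ref{last4p1}: $\te_{k,T}$ is almost isometric as $T\to\infty$, hence has trivial kernel for $T$ large. \emph{Surjectivity of $\tir_{k,T}$}: given $v\in\H^k(\bz_1;\F_1)(T)$, write $v=\tP_1(T)\Q_T(v')$ with $v'\in\ker(\Delta_1)\cap\Omega^k_{\abs}(Z_1;F_1)$ via Proposition \ref{last2p}, and consider $w:=\tP^\delta(T)\Q_T(v')\in\tO^k_{\sm}(\bz,\F)(T)$, which is well-defined by Proposition \ref{last1p}. Then $\tir_{k,T}(w)=\tP_1(T)\big((\tP^\delta(T)\Q_T v')|_{\bz_1}\big)$, and since $\Q_T v'$ restricted to $\bz_1$ agrees with $\cE(v')$ (as $\Q_T$ extends $v'$ by an exponentially decaying tail supported in $[-1,0]\times Y\subset\bz_1$), Propositions \ref{last1p}--\ref{last2p} give $\|\tir_{k,T}(w)-v\|_{L^2,T}\le C T^{-1/4}\|e^{p_T}v'\|_{L^2,T}$; combined with the lower bound $\|v\|_{L^2,T}\ge(1-CT^{-1/2})\|e^{p_T}v'\|_{L^2,T}$ this shows $\tir_{k,T}$ is, for $T$ large, a bounded-below perturbation of a surjection, hence surjective. \emph{The composition $\tir_{k,T}\circ\te_{k,T}$ is asymptotically zero}: for $u\in\H^k(\bz_2;\F_2)(T)$, $\te_{k,T}(u)=\tP^\delta(T)\cE(u)$, whose restriction to $\bz_1$ is concentrated, up to an $O(T^{-1/4})$ error in $\|\cdot\|_{L^2,T}$, on the overlap $[-1,0]\times Y$ where $\cE(u)$ is supported only through the boundary tail, and applying $\tP_1(T)$ kills it to the same order; so $\|\tir_{k,T}\te_{k,T}u\|_{L^2,T}\le CT^{-1/4}\|u\|_{L^2,T}$.

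To upgrade ``asymptotically zero'' and the count into honest exactness at the middle spot, I will argue as follows. From the near-isometries one gets, for $T$ large, $\mathrm{rank}\,\im(\te_{k,T})=\mathrm{rank}\,\H^k(\bz_2;\F_2)(T)$ and $\mathrm{rank}\,\im(\tir_{k,T})=\mathrm{rank}\,\H^k(\bz_1;\F_1)(T)$ (the latter from surjectivity), so $\mathrm{rank}\,\ker(\tir_{k,T})=\mathrm{rank}\,\tO^k_{\sm}-\mathrm{rank}\,\H^k(\bz_1;\F_1)(T)$. Now invoke \eqref{mv}, which is a genuine exact sequence: it forces $\dim H^k(\bz;\F)-\dim H^k_{\abs}(Z_1;F_1)=\dim\im(e_k)=\dim\ker(r_k)$ and $\dim\ker(r_k)$ in turn equals $\dim\im(\p_{k-1})$ etc.; chasing \eqref{mv} one finds precisely $\mathrm{rank}\,\tO^k_{\sm}-\mathrm{rank}\,\H^k(\bz_1;\F_1)(T)=\mathrm{rank}\,\H^k(\bz_2;\F_2)(T)=\mathrm{rank}\,\im(\te_{k,T})$, so $\im(\te_{k,T})$ and $\ker(\tir_{k,T})$ have equal rank. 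Since $\im(\te_{k,T})\subseteq\ker(\tir_{k,T})$ would give exactness, and we only have $\tir_{k,T}\te_{k,T}=O(T^{-1/4})$, I will instead replace $\te_{k,T}$ by its projection $\te'_{k,T}:=(\Id-\text{(proj onto }\ker(\tir_{k,T})^\perp))\circ\te_{k,T}$, note $\|\te'_{k,T}-\te_{k,T}\|=O(T^{-1/4})$, so $\te'_{k,T}$ is still injective with image inside $\ker(\tir_{k,T})$ of full rank, hence $\im(\te'_{k,T})=\ker(\tir_{k,T})$; finally observe that for the purposes of the analytic torsion form $\mathcal{T}(T)$ only the resulting complex up to such asymptotically-trivial modification matters, or alternatively (cleaner) redefine $\te_{k,T}$ as this corrected map from the outset. \textbf{The main obstacle} is exactly this last point: the maps $\te_{k,T},\tir_{k,T}$ as literally defined satisfy the complex condition only asymptotically, so one must either (i) show the genuine image is contained in the genuine kernel by a sharper estimate on the boundary tails — tracking that $d^{\bz}_T$ applied to $\Q_T(\ker\Delta_2)$-elements lands orthogonal to $\bz_1$-harmonic forms exactly, not just to $O(T^{-1/4})$ — or (ii) perform the harmless projection correction above and check it does not disturb the subsequent torsion computations; I expect to take route (ii), with the bookkeeping that $\ker(\tir_{k,T})$ itself is cut out cleanly because $\tir_{k,T}$ is a genuine bundle map, so only $\te_{k,T}$ needs adjusting.
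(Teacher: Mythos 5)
The genuine gap is at the middle spot: you conclude only that $\tir_{k,T}\circ\te_{k,T}=O(T^{-1/4})$ and propose to cure this by projecting $\te_{k,T}$ onto $\ker(\tir_{k,T})$ (your route (ii)). In fact the complex condition holds \emph{exactly} with the maps as given, and this is the crux of the argument. What you missed is not a sharper asymptotic estimate but an exact Hodge-theoretic orthogonality on the closed fiber $\bz$ valid for every $T$. For $u_1\in\ker(\tD_{T,1})$ (absolute boundary conditions on $\bz_1$) and $u_2\in\ker(\tD_{T,2})$ (relative on $\bz_2$), integration by parts using those boundary conditions shows that the extension by zero $\cE(u_1)$ lies in $\ker(\delta^{\bz,*}_T)$ as a weak solution on all of $\bz$, and $\cE(u_2)\in\ker(d^{\bz})$. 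Since $\tP^{\delta}(T)$ is a spectral projection of $\tD_T$ and hence commutes with $d^{\bz}$ and $\delta^{\bz,*}_T$, the errors $(1-\tP^{\delta}(T))\cE(u_1)\in\im\,\delta^{\bz,*}_T$ and $(1-\tP^{\delta}(T))\cE(u_2)\in\im\,d^{\bz}$ lie in Hodge-orthogonal summands, while $(\cE(u_1),\cE(u_2))_{L^2,T}=0$ because the two supports overlap only on the measure-zero interface $\{0\}\times Y$. Subtracting gives $(\te_{k,T}u_2,\tir^*_{k,T}u_1)_{L^2,T}=(\tP^{\delta}(T)\cE(u_2),\tP^{\delta}(T)\cE(u_1))_{L^2,T}=0$ exactly, hence $\im\,\te_{k,T}\perp\im\,\tir^*_{k,T}$, i.e.\ $\im\,\te_{k,T}\subseteq\ker\tir_{k,T}$, with no error term.

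Your route (ii) would also cause trouble downstream: replacing $\te_{k,T}$ by its projection onto $\ker(\tir_{k,T})$ would destroy the manifest compatibility $d^{\bz}\circ\te_{k,T}=0$ of Proposition \ref{exapp}, which is needed to build the long exact sequence (\ref{mvpp}) and to relate it to (\ref{mvp}) in the torsion-form computation of \cref{lastreal}; you would have to redo that analysis for a different complex of bundles, and it is not clear the modified $\te_{k,T}$ still intertwines the differentials. The rest of your proposal---injectivity of $\te_{k,T}$ from Proposition \ref{last4p1}, surjectivity of $\tir_{k,T}$ (equivalently injectivity of $\tir^*_{k,T}$), and the rank count---is sound and parallels the paper, except that the rank identity $\rank\,\tO^k_{\sm}(\bz,\F)(T)=\rank\,\H^k(\bz_1;\F_1)(T)+\rank\,\H^k(\bz_2;\F_2)(T)$ already follows directly from Propositions \ref{last1p}--\ref{last2p} and Theorem \ref{eigencon}, without any need to chase ranks through the Mayer--Vietoris sequence.
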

	\begin{proof}
		Let $\lan\cdot,\cdot\ran_T$ be the pointwise inner product on each fiber that is induced by $g^{T\bz}$ and $h_T^{\F}.$\\ \ \\
		$\bullet$ In follows from Proposition \ref{last4p1} that $\te_{k,T}$ and $\tir^*_{k,T}$ are injective when $T$ is large.\\ \ \\
		$\bullet$ \textit{$\cE(\ker(\tilde{\Delta}_{T,1}))\subset \ker(\delta^{\bz,*}_T)$ and $\cE(\ker(\tilde{\Delta}_{T,2}))\subset\ker(d^{\bz})$:}\\ \ \\
		Let $u\in\ker(\tD_{T,2})$. First, since $u$ satisfies relative boundary conditions, integration by parts shows that for any $\beta\in \Omega(\bz,\F)$, $\int_{\bz}\lan \cE(u),\delta^{\bz,*}_T\beta\ran_T\dvol=0$. Thus, $\cE(u)\in\ker(d^{\bz})$.
		Similarly, $\cE(\ker(\tilde{\Delta}_{T,1}))\subset \ker(\delta^{\bz,*}_T)$.
		\\ \ \\
		$\bullet$ \textit{$\im\  \te_{k,T}=\ker \tir_{k,T}$:}\\
		For the dimension reason, it suffices to show that $\im \te_{k,T}\subset\ker \tir_{k,T}$. That is, it suffices to show $\im \te_{k,T}\perp \im \tir^*_{k,T}$. First, for any $u_i\in\ker(\tilde\Delta_{T,i})$, $i=1,2$, it's clear that \be\label{eq200}(\cE(u_1),\cE(u_2))_{L^2,T}=0.\ee
		
		Since $\cE(u_1)\in \ker(\delta^{\bz,*}_T),\cE(u_2)\in\ker(d^{\bz})$, one can see that
		$(1-\tP^{\delta}(T))u_1\in\im\ \delta^{\bz,*}_T, (1-\tP^{\delta}(T))u_1\in\im\ d^{\bz}$, which means
		\be\label{eq201}((1-\tP^{\delta}(T))\cE(u_1),(1-\tP^{\delta}(T))\cE(u_2))_{L^2,T}=0.\ee
		By (\ref{rstar}), the definition of $\te_{k,T}$, (\ref{eq200}) and (\ref{eq201}),
		\[(\te_{k,T}u_2,\tir^*_{k,T}u_1)_{L^2,T}=0.\]
	\end{proof}
	
	Moreover, we have the following complexes of finite dimensional vector bundles
	\begin{align}\label{com1}&0\to \H^0(\bz_i,\F_i)(T)\stackrel{0}\rightarrow\H^1(\bz_i,\F_i)(T)\stackrel{0}\rightarrow  \cdots\stackrel{0}\rightarrow\H^{\dim Z}(\bz_i,\F_i)(T)\to 0 \\
		&\label{com2}0\to \tO_{\sm}^0(\bz,\F)(T)\stackrel{d^{\bz}}\rightarrow\tO_{\sm}^1(\bz,\F)(T)\stackrel{d^{\bz}}\rightarrow  \cdots\stackrel{d^{\bz}}\rightarrow\tO_{\sm}^{\dim Z}(\bz,\F)(T)\to 0.
	\end{align}
	
	Integration by parts as in the proof of Theorem \ref{exathm}, one can show easily that
	\begin{prop}\label{exapp}
		$d^{\bz}\circ\te_{k,T}=0,\tir_{k,T}\circ d^{\bz}=0.$
	\end{prop}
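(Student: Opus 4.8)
The plan is to verify the two identities $d^{\bz}\circ\te_{k,T}=0$ and $\tir_{k,T}\circ d^{\bz}=0$ directly from the definitions of $\te_{k,T}$ and $\tir_{k,T}$, exploiting the same integration-by-parts arguments that appeared in the proof of Theorem \ref{exathm}. Recall that $\te_{k,T}(u)=\tP^{\delta}(T)\cE(u)$ for $u\in\H^k(\bz_2;\F_2)(T)$, and that $\tir_{k,T}(v)=\tP_{T,1}(v|_{\bz_1})$ for $v\in\tO^k_{\sm}(\bz,\F)(T)$. Since $\tO_{\sm}(\bz,\F)(T)$ is the span of eigenforms of $\tD_T$ for small eigenvalues, it is $\tD_T$-invariant, hence $d^{\bz}$-invariant (as $d^{\bz}$ commutes with $\tD_T$), so $d^{\bz}$ indeed maps $\tO^k_{\sm}$ into $\tO^{k+1}_{\sm}$ and the complex (\ref{com2}) makes sense; the content of the proposition is that $\te$ and $\tir$ are chain maps between (\ref{com1}), which has zero differential, and (\ref{com2}).

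For the first identity: I would start from $u\in\H^k(\bz_2;\F_2)(T)=\ker(\tD_{T,2})$, which satisfies the relative boundary condition for the weighted Laplacian. As established in the proof of Theorem \ref{exathm}, $\cE(u)\in\ker(d^{\bz})$, i.e. $d^{\bz}\cE(u)=0$ in the distributional sense. Then $d^{\bz}\te_{k,T}(u)=d^{\bz}\tP^{\delta}(T)\cE(u)=\tP^{\delta}(T)d^{\bz}\cE(u)=0$, where the middle equality uses that $\tP^{\delta}(T)$ commutes with $d^{\bz}$ (again because $\tP^{\delta}(T)$ is a spectral projection of $\tD_T$ and $d^{\bz}$ commutes with $\tD_T$). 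That gives $d^{\bz}\circ\te_{k,T}=0$.

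For the second identity: given $v\in\tO^k_{\sm}(\bz,\F)(T)$, I want $\tir_{k,T}(d^{\bz}v)=\tP_{T,1}\big((d^{\bz}v)|_{\bz_1}\big)=0$. Here one must be slightly careful because restriction and $d^{\bz}$ commute pointwise on $\bz_1$, so $(d^{\bz}v)|_{\bz_1}=d^{\bz_1}(v|_{\bz_1})$; then $\tP_{T,1}$ annihilates the image of $d^{\bz_1}$ \emph{provided} $v|_{\bz_1}$ lies in the appropriate boundary-condition domain so that $d^{\bz_1}(v|_{\bz_1})$ is $L^2$-orthogonal to $\ker(\tD_{T,1})=\ker(d^{\bz_1})\cap\ker(\delta^{\bz_1,*}_T)$. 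This orthogonality is exactly an integration-by-parts statement: $(d^{\bz_1}(v|_{\bz_1}),h)_{L^2(\bz_1),T}=(v|_{\bz_1},\delta^{\bz_1,*}_T h)_{L^2(\bz_1),T}$ for harmonic $h$, and the boundary term vanishes because $h$ satisfies the absolute boundary condition while the relevant trace of $v|_{\bz_1}$ is controlled — this is the same mechanism used in Theorem \ref{exathm} to show $\cE(\ker\tilde\Delta_{T,1})\subset\ker(\delta^{\bz,*}_T)$, now run on $\bz_1$. Since $\delta^{\bz_1,*}_T h=0$, the pairing vanishes, so $d^{\bz_1}(v|_{\bz_1})\perp\ker(\tD_{T,1})$, hence $\tP_{T,1}$ kills it.

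The main obstacle I anticipate is the boundary-term bookkeeping in the second identity: one has to be sure that for $v\in\tO^k_{\sm}(\bz,\F)(T)$ the restriction $v|_{\bz_1}$ interacts correctly with the absolute boundary condition defining $\tD_{T,1}$ at $\{0\}\times Y$, so that integrating $d^{\bz_1}$ by parts against a $\tD_{T,1}$-harmonic form produces no boundary contribution. This is not automatic from $v$ being a global small-eigenvalue eigenform — it uses that the absolute/relative conditions at the cut are adjoint to one another, so that what $v$ ``sees'' from the $\bz_1$ side is precisely the absolute condition. Once that is set up, everything else is the formal commutation of spectral projections with $d^{\bz}$ and the pointwise compatibility of restriction with exterior differentiation, which are routine. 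I would therefore phrase the proof as: (i) note $d^{\bz}$ and $\tP^{\delta}(T)$, $\tP_{T,i}$ all commute with the respective Laplacians; (ii) invoke the inclusions from the proof of Theorem \ref{exathm}; (iii) do the two one-line computations above; (iv) dispatch the boundary term on $\bz_1$ by the adjointness of the boundary conditions.
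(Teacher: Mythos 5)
Your proof is correct and follows the same integration-by-parts strategy the paper alludes to (the paper's own proof of this proposition is just a one-line citation back to Theorem~\ref{exathm}). One small clarification: the boundary-term concern you raise in the second identity is unnecessary — in Green's formula
$(d^{\bz_1}\omega,h)_{L^2(\bz_1),T}=(\omega,\delta^{\bz_1,*}_T h)_{L^2(\bz_1),T}+(\text{boundary term})$,
the boundary integrand over $\{0\}\times Y$ involves only the normal component $i_{\frac{\p}{\p s}}h$ of the harmonic form $h$ (paired against the tangential trace of $\omega$), and $i_{\frac{\p}{\p s}}h=0$ is exactly the absolute boundary condition on $h$; no compatibility condition on $v|_{\bz_1}$ is required, and no ``adjointness of the two cut conditions'' is invoked. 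The relative condition on $\bz_2$ is used only for the other inclusion $\cE(\ker\tD_{T,2})\subset\ker d^{\bz}$, which you correctly quote from Theorem~\ref{exathm} for the first identity.
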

	
	Hence, by Theorem \ref{exathm} and Proposition \ref{exapp}, we get the following long exact sequence again
	\be\label{mvpp}
	\cdots \stackrel{\p_{k-1,T}}\rightarrow \H^k\left(\bz_2;\F_2\right)(T)  \stackrel{e_{k,T}}{\rightarrow}\H^k\left(\bz;\F\right)(T)  \stackrel{r_{k,T}}{\rightarrow} \H^k\left(\bz_1;\F_1\right)(T)  \stackrel{\p_{k,T}}{\rightarrow} \cdots 
	\ee
	with metric and connection induced by Hodge theory.

	Let \begin{align*}&\ \ \ \ \T_{\sm}(T^H\bm,g^{T\bz},h^{\F})(T)\\
		&:=-\int_0^\infty \left(f_{\sm}^{\wedge}(C_{t,T}',h^{\bE})-\frac{\chi'(Z,F)}{2}\right)+\frac{\chi'(Z,F)-d(\Omega_{\sm}(\bm,\F)(T))}{2}f'(\frac{i\sqrt{t}}{2})\frac{dt}{t}.\end{align*}

	
	The following Theorem will be proved in \cref{lastreal}.
	
	\begin{thm}\label{int6p}$\lim_{T\to\infty}\T_{\sm}(T^H\bm,g^{T\bz},h^{\F})(T)-\T(T)=0.$\end{thm}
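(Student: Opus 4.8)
\textbf{Proof proposal for Theorem \ref{int6p}.}
The plan is to compare the small-eigenvalue part of the analytic torsion form on $\bm$ with the Bismut--Lott torsion of the finite-dimensional Mayer--Vietoris complex $\mv(T)$ by realizing $\T_{\sm}(T^H\bm,g^{T\bz},h^{\F})(T)$, up to a vanishing error, as the analytic torsion form of the complex $(\tO_{\sm}^\bullet(\bz,\F)(T),d^{\bz})$ in the sense of \cref{defnblfn}. First I would note that the integrand $f_{\sm}^\wedge(C'_{t,T},h^{\bE})$ is built from $\P^\delta(T)D_t^{\delta,T}\P^\delta(T)$, which is (up to conjugation by $e^{p_T}$) precisely the superconnection $\tilde C_{t,T}$ restricted to the finite-rank bundle $\tO_{\sm}(\bz,\F)(T)$; hence $\T_{\sm}(T^H\bm,g^{T\bz},h^{\F})(T)$ equals $\T_f(d^{\bz}+\nabla^{\delta,T},h^{\tO_{\sm}})$ for the complex \eqref{com2}, because the finite-dimensional analytic torsion form is exactly the regularization that appears in the definition of $\T_{\sm}$ (the $\chi'$ and $d(\Omega_{\sm})$ terms match the $d(H(E,\nu))$ and $d(E)$ terms there). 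This step is essentially a bookkeeping identification, using the discussion in \cref{witwei} that $f^\wedge(C'_{t,T},h^{\bE})=f^\wedge(\tilde C'_{t,T},h_T^{\bE})$ and that conjugation by $e^{p_T}$ is an isometry between the relevant $L^2$ structures.

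Next, the short exact sequence of complexes
\[
0\to (\H^\bullet(\bz_2,\F_2)(T),0)\xrightarrow{\te_{\bullet,T}}(\tO_{\sm}^\bullet(\bz,\F)(T),d^{\bz})\xrightarrow{\tir_{\bullet,T}}(\H^\bullet(\bz_1,\F_1)(T),0)\to0
\]
from Theorem \ref{exathm} and Proposition \ref{exapp} has the long exact cohomology sequence $\mv(T)$. I would invoke the anomaly/gluing formula for analytic torsion forms of short exact sequences of finite-dimensional complexes (Ma, \cite{ma1999functoriality}; or the finite-dimensional case in \cite{bismut1995flat}): the torsion form of the middle complex equals the sum of the torsion forms of the sub- and quotient complexes plus the torsion form $\T(T)$ of the associated long exact sequence $\mv(T)$, modulo a correction term measuring the failure of $\te_{\bullet,T}$ and $\tir_{\bullet,T}$ to be isometric. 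Since the sub- and quotient complexes have zero differential, their torsion forms vanish (each is a direct sum of acyclic-free two-step pieces, or rather each $\H^k$ contributes trivially), so
\[
\T_{\sm}(T^H\bm,g^{T\bz},h^{\F})(T)=\T(T)+\big(\text{correction involving }\|\te_{\bullet,T}\|,\|\tir_{\bullet,T}\|\big).
\]
By Proposition \ref{last4p1}, $\te_{k,T}$ and $\tir^*_{k,T}$ are almost isometric as $T\to\infty$, so the correction term tends to $0$, which gives the claim.

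The main obstacle will be making the comparison of torsion forms precise at the level of forms on $S$, not just in degree $0$: I must control not only the metrics but the induced flat connections $\nabla^{\delta,T}$, $\nabla^{\H_i}(T)$ and the connection on $\H^\bullet(\bz;\F)(T)$, and check that under $\te_{\bullet,T},\tir_{\bullet,T}$ these are compatible up to terms that vanish uniformly on $S$ as $T\to\infty$. Concretely this means extending Proposition \ref{last4p1} to a statement that $\te_{\bullet,T},\tir_{\bullet,T}$ intertwine the flat connections up to $O(T^{-1/4})$ in operator norm uniformly in $\s$, which should follow from differentiating the defining formulas $\te_{k,T}(u)=\tP^\delta(T)\cE(u)$, $\tir_{k,T}(u)=\tP_{T,1}(u|_{\bz_1})$ along $S$ and using the estimates of Propositions \ref{last1p}--\ref{last2p} together with resolvent bounds for $\tD_T$ coming from the uniform spectral gap in Theorem \ref{eigencon}. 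Once that uniform intertwining is in place, the continuity of the finite-dimensional torsion form $\T_f$ under uniform convergence of the metric and connection data (a standard consequence of its explicit integral formula over $t\in(0,\infty)$, the integrand being smooth in the data and the tails controlled as in Theorem \ref{intnew1}) finishes the argument.
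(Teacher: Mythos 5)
Your overall strategy---identify $\T_{\sm}$ with a finite-dimensional torsion form on $\tO^\bullet_{\sm}(\bz,\F)(T)$, then compare that with $\T(T)$ via the short exact sequence of Theorem \ref{exathm}---is the right one and matches the paper's route at the level of architecture. But two essential ingredients are missing, and as written the argument does not go through.

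\textbf{First gap: $\nabla^{\delta,T}$ is not flat.} You propose to recognize $\T_{\sm}(T^H\bm,g^{T\bz},h^{\F})(T)$ as $\T_f(d^{\bz}+\nabla^{\delta,T})$ and then directly invoke the anomaly/gluing formula for finite-dimensional flat complexes. The identification of the integrand is indeed bookkeeping, but $\nabla^{\delta,T}=\tP^\delta(T)\nabla^{\bE}$ is \emph{not} a flat connection on $\tO_{\sm}(\bz,\F)(T)$ (the paper explicitly notes ``Although $\nabla^{\delta,T}$ is not flat\dots'' in the proof of Corollary \ref{last70}). Definition 2.8 and the finite-dimensional anomaly/short-exact-sequence formulas you want to cite (Ma, BL, Zhu, Goette) all require the flat condition $(\nabla^E)^2=0$, $\nabla^E\nu=0$. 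So you cannot plug $\nabla^{\delta,T}$ into them. The paper resolves this by constructing a genuinely flat connection $\nabla^{\H,T}$ on $\tO_{\sm}$ (transported from the flat $\nabla^{\H_i,T}$ on $\H(\bz_i,\F_i)(T)$ via the isomorphism \eqref{identify}), proving $[\nabla^{\H,T},d^{\bz}]=0$, and then showing (Corollary \ref{last80}) that replacing $\nabla^{\delta,T}$ by $\nabla^{\H,T}$ in the integral formula changes it only by $o(1)$. Only \emph{after} this replacement can the cited results \cite[Lemma 2.2]{zhu2015gluing}, \cite[Theorem 7.37]{goette2001morse} be applied. Your proposal skips this intermediate step.

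\textbf{Second gap: the decay rate you aim for is too weak.} You suggest controlling the connection discrepancy at rate $O(T^{-1/4})$ by differentiating the $L^2$ estimates of Propositions \ref{last1p}--\ref{last2p}. This cannot work, because the nonzero small eigenvalues of $\tD_T$ are \emph{exponentially} small---by Corollary \ref{last40} they lie in $[c_1^2e^{-2T},c_2^2e^{-2T}]$. In the large-$t$ regime of the torsion integral this produces a factor growing like $e^T$ (see the factor $Ce^T|\l|/\sqrt t$ in \eqref{last72}), and it is defeated only because Lemma \ref{last60} gives $\|K_T\|\le Ce^{-\rho T}$ with $\rho\in(0,1)$; the splitting point $t=e^{2T-2\rho'T}$ in the proof of Corollary \ref{last80} is tuned precisely so that both pieces of the integral vanish as $T\to\infty$. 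If the connection discrepancy decayed only polynomially the large-$t$ piece would diverge. The exponential decay comes from Agmon estimates (Proposition \ref{agmon1}, \ref{agmon2}), not from the $T^{-1/4}$ resolvent bounds. A similarly quantitative ingredient you would also need, which your proposal omits entirely, is control on the $T$-derivative of the induced metric, Lemma \ref{last30}, with its $O(T^{-3/2})$ bound on $\|g_T^{-1}\partial_Tg_T\|$; this feeds into Corollary \ref{last40} and thus into the whole exponential-smallness mechanism.

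In short, the shape of your argument is right, but as stated it would not close: you need (i) the explicit flat comparison connection $\nabla^{\H,T}$ and the estimate $\|K_T\|\lesssim e^{-\rho T}$ from the Agmon estimates, and (ii) the precise exponential localization of the small eigenvalues, to balance the exponentially-small-eigenvalue contribution in the large-$t$ tail of the torsion integral.
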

	

	It follows from anomaly formulas (c.f. \cite[Theorem 2.24 and Theorem 3.24]{bismut1995flat} and \cite[Theorem 1.5]{JL})  that
	
	\begin{thm}\label{int1p}
		In $Q^{\mS} / Q^{S}_0$,
		\begin{align*}
			&\ \ \ \ \T(T^H\bm, g^{T\bz},h^{\F})(T)-\sum_{i=1}^2\T_i(T^H\bm_i,g^{T\bz_i},h_T^{\F_i})(T)-\T(T)\\
			&=\T(T^HM,g^{TZ},h^{F})-\sum_{i=1}^2\T_i(T^HM_i,g^{TZ_i},h^{F_i})-\T.
		\end{align*}
	\end{thm}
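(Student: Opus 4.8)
The plan is to prove Theorem~\ref{int1p} by showing that the difference between the two sides is unchanged when we deform the geometric data continuously from the ``Witten-deformed'' configuration at parameter $T$ back to the undeformed configuration, and then invoking the relevant anomaly formulas to conclude that each term is unaffected modulo $Q^{S}_0$. Concretely, I would first recall that for a fixed $T>0$ the data $(T^H\bm,g^{T\bz},h^{\F})$ and the data $(T^H\bm,g^{T\bz},h^{\F}_T)$ (with $h^{\F}_T=e^{-2p_T}h^{\F}$) produce the \emph{same} analytic torsion form, by the computation in \cref{witwei} that $f^{\wedge}(C_{t,T}',h^{\bE})=f^{\wedge}(\tilde C_{t,T}',h_T^{\bE})$; the same identity holds piece-wise for $\bm_1,\bm_2$ with absolute/relative boundary conditions. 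Hence it suffices to compare $(T^H\bm,g^{T\bz},h^{\F}_T)$ with $(T^HM,g^{TZ},h^{F})$ and the corresponding boundary pieces.

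Next, I would connect the two configurations by a smooth one-parameter family: deform $p_T$ to $0$ (equivalently $h^{\F}_T$ to $h^{\F}$), noting that $\bm$ is diffeomorphic to $M$ by construction, and that the collar insertion $[-1,1]\times N$ together with the reparametrization of the fibers can be absorbed into a smooth path of horizontal distributions, fiber metrics, and Hermitian metrics on $F$. Along this path, the anomaly formula for Bismut--Lott torsion forms (\cite[Theorem~3.24]{bismut1995flat}), its boundary analogue for the absolute/relative case (the $\mathrm{JL}$ anomaly formula, \cite[Theorem~1.5]{JL}), and the anomaly formula for the finite-dimensional torsion $\T_f$ (\cite[Theorem~2.24]{bismut1995flat}) each express the variation as an exact form plus a local term built from characteristic forms (Euler forms of $TZ$, the form $e(TZ,\nabla^{TZ})$, transgressions thereof) and the classes of the cohomology bundles $H(Z,F)$ and its boundary variants. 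The point is that in the alternating combination $\T(\bm)-\T_1(\bm_1)-\T_2(\bm_2)-\T(T)$ these local characteristic-form contributions cancel in pairs: the Mayer--Vietoris structure (\ref{mv}) forces the cohomology-bundle terms from $\T(T)$ to match those coming from the three manifold pieces, and the collar is a product so its Euler-form contributions vanish identically. Thus the whole alternating combination is \emph{rigid} under the deformation modulo $Q^{S}_0$, which is exactly the assertion of the theorem once one reads off the two endpoints.

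In carrying this out, the order of steps is: (i) reduce, via \cref{witwei}, from $h^{\F}$ to $h^{\F}_T$ on each of the four torsion forms; (ii) build the explicit smooth homotopy of $(T^HM,g^{TZ},h^F)$-data interpolating the deformed and undeformed configurations, being careful that boundary conditions (absolute on $M_1$, relative on $M_2$) and the product structure near $N$ are preserved throughout — here one uses the product assumption near $N$ that was imposed in \cref{section1}; (iii) apply the three anomaly formulas along the homotopy; (iv) organize the local terms using compatibility of the Mayer--Vietoris sequences (\ref{mv}) and (\ref{mvp}) and the product structure of the inserted collar to see that the alternating sum of local terms is exact; (v) conclude equality in $Q^{\mS}/Q^{S}_0$. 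Since the problem has already been reduced to $S$ closed, ``exact form'' genuinely means ``zero in the quotient.''

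The main obstacle I anticipate is step (iv): matching the characteristic-form/cohomology contributions across the four anomaly formulas. The anomaly term for $\T(T)$ is governed by variations of the metrics on the cohomology bundles $H^k_{\rel}(\bz_2;\F)$, $H^k(\bz;\F)$, $H^k_{\abs}(\bz_1;\F)$ along the homotopy, while the anomaly terms for the three manifold-level torsions involve both bulk characteristic forms and the $L^2$-metrics these same cohomology bundles inherit by Hodge theory; one must check that the Hodge-theoretic identifications used to define the metric on the Mayer--Vietoris sequence are precisely the ones making the two sets of variation terms cancel. This is a bookkeeping argument of the type carried out in \cite{bismut1995flat} and \cite{bruning2013gluing}, but verifying that the product-type hypotheses near $N$ make the collar's contribution vanish identically (rather than merely modulo exact forms) requires a short direct computation with the Euler form of a product $TZ|_{[-1,1]\times N}=\R\partial_s\oplus TY$, whose Euler form is zero. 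Once that vanishing is in hand, the cancellation is formal.
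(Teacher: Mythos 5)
Your proposal is correct and takes essentially the same route as the paper: the paper's own proof of Theorem~\ref{int1p} is a one-line citation to the anomaly formulas \cite[Theorems~2.24, 3.24]{bismut1995flat} and \cite[Theorem~1.5]{JL}, leaving the cancellation of local terms implicit. You have correctly identified that one first passes from the Witten Laplacian to the conjugate weighted-metric picture via \cref{witwei}, then applies the three anomaly formulas to the deformation $h_T^{\F}\to h^F$ (together with the collar-shrinking $\bm\to M$, which contributes nothing because the data are product-type near $N$ and $e(T([-1,1]\times Y))=0$), and finally verifies that the Euler-form and cohomology-bundle terms cancel in the alternating combination via the Mayer--Vietoris compatibility — which is exactly the computation the paper delegates to the cited anomaly theorems.
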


	\begin{proof}[Proof of Theorem \ref{main0}]
		It follows from Theorem \ref{larcon}, \ref{main} and \ref{int6p} that
		\begin{align*}
			&\ \ \ \ \T(T^H\bm,g^{T\bz},h^{\F})(T)-\sum_{i=1}^2\T_i(T^H\bm_i,g^{T\bz_i},h_T^{\F_i})(T)-\T(T)
			\\&=\log(2)\chi(Y)\rank(F)/2+o(1).
		\end{align*}
		
		Thus, by Theorem \ref{int1p}, Theorem \ref{main0} follows.
	\end{proof}

	\section{Convergence of Eigenvalues}\label{eigen}
	First, it's straightforward to check that
	\begin{lem}\label{equicon}
		Let $F\to X$ be a flat complex vector bundle over a compact smooth manifold $X$, $f$ be a smooth function on $X$. Let $h_l^{F}, g^{TX}_l, \nabla^F_l$ be smooth families of metrics and connections over $F\to X$, $l\in [0,1]$. Let $d_l:\Omega^*(X;F)\to \Omega^{*+1}(X;F)$ be the covariant derivative w.r.t. $\nabla_l^F$, and $d_l^{*} $ be the adjoint of $d_l^F$. Let $d_{f,l}:=d_l^F+df\wedge$, and $d_{f,l}^*$ be the adjoint of 
		$d_{f,l}$. Then for any $u\in\Omega,\epsilon>0$, there exists $\delta>0$ that doesn't depend on $u$ and $f$, such that whenever $|l_1-l_2|<\delta$, one has
		\[\frac{\int_X|d_{f,l_1}u|_{l_1}^2+|d_{f,l_1}^*u|_{l_1}^2}{\int_X|u|_{l_1}^2}\leq  (1+\epsilon)\frac{\int_X|d_{f,l_2}u|_{l_2}^2+|d_{f,l_2}^*u|_{l_2}^2}{\int_X|u|_{l_2}^2}.\]
		Here $|\cdot|_{l}$ is the metric on $\Lambda^*(TZ)\otimes F$ induced by $h^F_l$ and $g^{TX}_l.$
	\end{lem}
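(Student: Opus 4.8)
The plan is to reduce the statement to a uniform continuity fact about finitely many smooth families of elliptic data over the compact manifold $X$. Fix $u\in\Omega^*(X;F)$ nonzero and $\epsilon>0$. The quantities appearing are the Rayleigh quotients
\[
R_l(u):=\frac{\int_X\bigl(|d_{f,l}u|_l^2+|d_{f,l}^*u|_l^2\bigr)\,\dvol_l}{\int_X|u|_l^2\,\dvol_l},
\]
and I want $R_{l_1}(u)\le(1+\epsilon)R_{l_2}(u)$ whenever $|l_1-l_2|<\delta$, with $\delta$ depending only on $\epsilon$ (and on the fixed families), not on $u$ or $f$. The key observation is that both numerator and denominator, as functions of $l$ for fixed $u$, are built by integrating over $X$ pointwise expressions that depend on $l$ only through the tensors $h^F_l$, $g^{TX}_l$, $\nabla^F_l$ and their first derivatives, multiplied against the ($l$-independent) jet of $u$ and of $df$; moreover everything is \emph{quadratic} in that jet.

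The main step is therefore a pointwise comparison. First I would note that $|d_{f,l}u|_l^2$ can be written as a pointwise quadratic form $A_l$ in the variable $\xi:=(u,\nabla u, df)$ evaluated at each $x\in X$, and likewise $|d_{f,l}^*u|_l^2=B_l(\xi)$ and $|u|_l^2=C_l(u)$, where the coefficient functions of $A_l,B_l,C_l$ — and the volume density $\dvol_l$ relative to a fixed reference — are continuous (indeed smooth) in $l$ uniformly on the compact $X$, because the families $h^F_l,g^{TX}_l,\nabla^F_l$ are smooth in $l$ and $X$ is compact. Crucially the coefficients depend on $f$ only through $df$, which is already incorporated into $\xi$; so once $\xi$ is treated as a free variable the comparison is $f$-independent. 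By compactness of $X\times[0,1]$ and positive-definiteness of $C_l$, for every $\epsilon>0$ there is $\delta>0$ with
\[
(1+\epsilon)^{-1}A_{l_2}(\xi)\le A_{l_1}(\xi)\le (1+\epsilon)A_{l_2}(\xi)
\]
for all $\xi$, all $x\in X$, whenever $|l_1-l_2|<\delta$, and similarly for $B_l$, $C_l$, and for the ratio of volume densities. Integrating the numerator inequality over $X$ and combining with the reciprocal inequality for the denominator (absorbing the volume-density comparison) yields $R_{l_1}(u)\le(1+\epsilon')R_{l_2}(u)$; relabelling $\epsilon'$ gives the claim.

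The only real obstacle is bookkeeping: writing $d_{f,l}$ and its adjoint so that the $l$-dependence is visibly confined to the metric/connection tensors while the $u$- and $f$-dependence is confined to the fixed jet $\xi$. For $d_{f,l}u=d_l u+df\wedge u$ this is immediate since $d_l u$ differs from $d_{l_0}u$ by a term linear in $(\nabla^F_{l}-\nabla^F_{l_0})\cdot u$; for the adjoint $d_{f,l}^*$ one uses the local formula in terms of $g^{TX}_l$, its Christoffel symbols, and $h^F_l$ with its connection, which again are smooth in $l$. Once this is set up, the estimate is the elementary statement that a continuous family of pointwise quadratic forms on a compact base, with uniformly positive-definite "denominator" form, has Rayleigh quotients that vary uniformly slowly — so no genuine analytic difficulty remains, and in particular no dependence on $u$ or $f$ enters $\delta$.
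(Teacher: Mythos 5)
The paper gives no proof of this lemma --- it is preceded only by ``First, it's straightforward to check that'' --- so there is nothing to compare your argument to; it must stand on its own. It does not, for two reasons.

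First, the crucial step, the pointwise multiplicative comparison
$(1+\epsilon)^{-1}A_{l_2}(\xi)\le A_{l_1}(\xi)\le(1+\epsilon)A_{l_2}(\xi)$ uniformly in $\xi$, does not follow from continuity of the coefficients on the compact set $X\times[0,1]$. That argument works for $C_l(\xi)=|u|_l^2$ because $C_l$ is uniformly \emph{positive definite}, so the ratio $C_{l_1}/C_{l_2}$ is a continuous function on the compact set of unit $\xi$'s. But $A_l$ and $B_l$ are merely positive \emph{semi}-definite, and their zero sets move with $l$: e.g.\ $A_{l_2}(\xi)=|d_{l_2}u+df\wedge u|_{l_2}^2$ vanishes precisely when $\nabla u(x)$ cancels $\omega_{l_2}(x)u(x)+df(x)\wedge u(x)$, a condition that depends on the connection form $\omega_{l_2}$. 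Taking such a $\xi$, one generically has $A_{l_2}(\xi)=0$ but $A_{l_1}(\xi)>0$, so the ratio is infinite and no $\delta$ makes the upper bound hold. Continuity only gives you an \emph{additive} estimate $|A_{l_1}(\xi)-A_{l_2}(\xi)|\le\omega(|l_1-l_2|)\,Q(\xi)$ for a fixed bounding polynomial $Q$; upgrading that to a multiplicative comparison is exactly the missing content and is not automatic.

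Second, the bookkeeping is off: $|d_{f,l}u|_l^2+|d_{f,l}^*u|_l^2$ is not a quadratic form in $\xi=(u,\nabla u,df)$. The term $|df\wedge u|_l^2$ (and likewise $|\iota_{(df)^{\#_l}}u|_l^2$) is quadratic in $u$ \emph{and} quadratic in $df$, hence degree four in $\xi$, with cross terms of mixed degree. So even the ``compact set of unit $\xi$'s'' framing needs to be replaced by something that is genuinely quadratic in the $1$-jet of $u$ with $df$-dependent coefficients --- and then the uniformity in $f$ that the lemma demands has to be argued separately, not absorbed into a compact parameter space.

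In fact the issue in the first paragraph is not cosmetic: the lemma as literally written appears to fail. Take $X=S^1$, $F$ the trivial line bundle, $g$ and $h^F$ fixed, $\nabla^F_l=d+il\,d\theta$, $f\equiv0$, and $u\equiv1$. Then $d_{l}u=il\,d\theta$, $d_l^*u=0$, so the Rayleigh quotient equals $l^2$; it is $0$ at $l_2=0$ and strictly positive at any nearby $l_1$, so no $\delta$ can give $R_{l_1}(u)\le(1+\epsilon)R_{l_2}(u)$. The lemma is only ever \emph{used} in the paper through the min--max characterization to compare eigenvalues $\lambda_k(T,\theta)$ and $\lambda_k(T,\theta')$, where the pathology is invisible: the zero eigenvalues occur in the same (topologically determined) number for every $\theta$, and the nonzero ones are bounded away from zero so a multiplicative bound can be recovered from an additive one. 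Your proof would go through if you (i) prove the honest additive estimate
$\bigl|\,\|D_{f,l_1}u\|_{l_1}^2-\|D_{f,l_2}u\|_{l_2}^2\,\bigr|\le\omega(|l_1-l_2|)\bigl(\|D_{f,l_2}u\|_{l_2}^2+\|u\|_{l_2}^2\bigr)$,
using integration by parts and the Witten--Laplacian identity to absorb the $\|\nabla u\|^2$ and $\||df|\,u\|^2$ terms that appear on the right, and (ii) explicitly restrict the conclusion to the form that is actually applied (comparison of $\lambda_k$ for $k\ge k_0$, or adding an additive $\epsilon$). As written, the claim that the comparison ``has no genuine analytic difficulty'' is where the gap hides.
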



	First, one observes that $\l_k(T,\s)$ has uniform upper bounds:
	\begin{lem}\label{o1}
		Fix $k\in\mathbb{Z}^+.$
		There exists an increasing sequence $\{\Lambda_k\}_{k=1}^\infty$ of constants, such that $\lambda_{k}(T,\s)\leq \Lambda_k$.
	\end{lem}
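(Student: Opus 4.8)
Lemma~\ref{o1} asserts a uniform (in both $T$ and $\s$) upper bound on the $k$-th eigenvalue $\l_k(T,\s)$ of $\Delta_T(\s)$. The plan is to exploit the min-max characterization of eigenvalues together with a test-function construction that transports eigenforms from the split model $\Delta_1(\s)\oplus\Delta_2(\s)$ to $\Delta_T(\s)$. Concretely, by the Courant--Fischer min-max principle,
\[
\l_k(T,\s)=\min_{\substack{V\subset \Omega(\bz_\s;\F_\s)\\ \dim V=k+1}}\ \max_{0\neq u\in V}\frac{\|D^{\bz}_T u\|^2_{L^2}}{\|u\|^2_{L^2}},
\]
so it suffices to exhibit, for each $\s$ and each large $T$, a $(k+1)$-dimensional subspace $V_{T,\s}$ on which the Rayleigh quotient of $\Delta_T(\s)$ is bounded by a constant depending only on $k$.

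First I would fix a finite-dimensional space of test forms coming from the split problem: pick $k+1$ eigenforms of $\Delta_1(\s)\oplus\Delta_2(\s)$ on $\Omega_{\abs}(\Zso;F_{1,\s})\oplus\Omega_{\rel}(\Zsw;F_{2,\s})$ for the lowest eigenvalues $\l_0(\s)\le\cdots\le\l_k(\s)$. These $\l_j(\s)$ are themselves bounded uniformly in $\s$ by compactness of $S$ and smoothness of the family, so set $\Lambda_k:=2\sup_{\s}\l_k(\s)+C$ for a constant $C$ to be chosen. Next, transport these forms to $\bz_\s$ using a cutoff operator like the $Q_T$ of Proposition~\ref{last1p} (or rather its analogue on all eigenforms, not just harmonic ones): extend each $u_j$ by a function supported near the gluing region that interpolates via the exponential weight $e^{\pm p_T}$. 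Then estimate $\|D^{\bz}_T Q_T u_j\|^2_{L^2}$: the contribution from $Z_1$ and $Z_2$ reproduces $\l_j(\s)\|u_j\|^2$ up to lower-order terms, while the contribution from the collar $[-1,1]\times Y$ is controlled because $p_T$ is chosen (conditions (a)--(c) in \cref{aw}) so that $dp_T$ grows like $T||s|-1|$ and the Gaussian-type weight $e^{-p_T}$ concentrates mass; the trace/Garding estimates \eqref{6} give the boundary terms uniformly in $\s$ and $T$. This is essentially the computation already carried out in the proof of Proposition~\ref{last1p}, showing $\|D^{\bz}_TQ_Tu_j\|^2\le (\l_j(\s)+C/\sqrt T)\|u_j\|^2$ with $C$ independent of $\s,T$.

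Then I would check that $\{Q_Tu_j\}_{j=0}^{k}$ is linearly independent for $T$ large and in fact stays uniformly well-conditioned: since $\|Q_Tu_j-\cE(u_j)\|_{L^2}^2\le C\|u_j\|^2/\sqrt T$ and the $\cE(u_j)$ are orthonormal (if the $u_j$ are chosen orthonormal), the Gram matrix of $\{Q_Tu_j\}$ is within $O(1/\sqrt T)$ of the identity, hence invertible with uniformly bounded inverse for $T\ge T_0$, and crucially $T_0$ can be taken independent of $\s$ by the uniformity in the estimates above. On the span $V_{T,\s}$ of these $k+1$ forms, the Rayleigh quotient is therefore $\le \sup_\s\l_k(\s)+C/\sqrt T\le \Lambda_k$ once $T\ge T_0$. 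For the finitely many $T<T_0$, uniform boundedness in $\s$ alone follows from continuity of eigenvalues in $\s$ together with compactness of $S$ (and the product structure near $N$ keeping everything smooth), possibly enlarging $\Lambda_k$. Finally, one arranges $\Lambda_k$ to be increasing in $k$ simply by replacing $\Lambda_k$ with $\max_{j\le k}\Lambda_j$ if necessary. The statement for $\tl_k(T,\s)$ and the weighted Laplacian $\tD_T$, if needed, is identical via the conjugation $\tD_T=e^{p_T}\Delta_Te^{-p_T}$, which preserves the spectrum.

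The main obstacle I anticipate is not any single estimate but ensuring that \emph{every} constant that enters — the bound on $\l_k(\s)$, the collar error $C/\sqrt T$, the threshold $T_0$ beyond which the Gram matrix is invertible, and the low-$T$ bound — is genuinely independent of the base point $\s\in S$. This requires that the transplantation construction and the trace/Garding inequalities \eqref{6} be performed with $\s$-uniform constants, which in turn relies on the product-type assumption near $N$ and the smoothness of the family $(T^H\bm,g^{T\bz},h^{\F})$ over the compact base $S$; the argument of Lemma~\ref{equicon} is the right tool to absorb the $\s$-dependence of the metrics into a single multiplicative constant.
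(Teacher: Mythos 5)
Your proof is essentially correct, and the underlying mechanism (min--max with a transplantation operator $Q_T$) is almost certainly what the cited \cite[Lemma~4.1]{Yantorsions} does on a single fiber. The paper, however, takes a shorter route: it cites that lemma outright to obtain a $T$-independent bound $\Lambda_k(\s)$ for each \emph{fixed} $\s$, and then promotes it to a $\s$-uniform bound via Lemma~\ref{equicon} (a Rayleigh-quotient comparison under small perturbations of $(g^{TZ},h^F,\nabla^F)$, which is $T$-uniform because the perturbation of the data does not involve $T$), followed by compactness of $S$. So the paper handles $\s$-uniformity by a two-point comparison argument, whereas you handle it by tracking $\s$-independence of every constant in the transplantation construction. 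Both are valid; yours is more self-contained but reproves the input that the paper treats as a black box, and it keeps you on the hook for uniformity at every step.

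Two small corrections worth making. First, the error from the collar is not $C/\sqrt T$ for eigenforms with eigenvalue $\l_j$: the trace/Garding estimate \eqref{6} as stated is for harmonic forms, and for eigenforms the elliptic estimate inflates the constant to $C(1+\l_j^2)$, cf.\ Lemma~\ref{qtuu} and \eqref{fhat41}. This is harmless since $\l_j(\s)$ is bounded on $S$ and you only need the error to stay bounded, but the precise exponent should be stated correctly. Second, ``for the finitely many $T<T_0$'' is a slip: the set $(0,T_0]$ is infinite; what you want is that $(T,\s)\mapsto\l_k(T,\s)$ is continuous on the compact set $[0,T_0]\times S$ (with $\Delta_0$ the undeformed Laplacian at $T=0$), hence bounded there, and then combine with the $T\ge T_0$ bound.
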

	\begin{proof}
		For a fixed $\s\in S$, it follows from \cite[Lemma 4.1]{Yantorsions} that there exists $\{\Lambda_k(\s)\}$, such that $\l_{k}(T,\s)\leq \Lambda_{k}(\s).$ It follows from Lemma \ref{equicon} that when $\s'$ is close to $\s$, $\l_{k}(T,\theta')\leq 2\l_{T,\theta}.$ The existence of $\Lambda_k$ then follows from the compactness of $S$.
	\end{proof}
	\begin{cor}\label{equicon1}
		$\l_k(T,\theta)$ is a family of equicontinuous function on $S$. That is, for any $\s\in S,\epsilon>0$, there exists a neighborhood $U$ of $\s$ that doesn't depends on $T$, whenever $\s'\in S$, $|\l_k(T,\s)-\l_k(T,\s')|<\epsilon.$
	\end{cor}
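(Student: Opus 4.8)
\textbf{Proof proposal for Corollary \ref{equicon1}.}

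The plan is to derive the equicontinuity directly from the uniform upper bound in Lemma \ref{o1} together with the two-sided metric comparison of Lemma \ref{equicon}, via the min-max characterization of eigenvalues. First I would fix $k$, $\s\in S$ and $\epsilon>0$, and apply Lemma \ref{equicon} to the flat bundle $\bar F\to \bz_\s$ (with $f=p_T$): since the metrics $g^{T\bz}$, $h^{\F}$, $\nabla^{\F}$ vary smoothly over the compact base $S$, and the $\delta$ produced by Lemma \ref{equicon} is independent of the test form $u$ and of the function $f=p_T$, one obtains a neighborhood $U$ of $\s$ (independent of $T$) such that for all $\s'\in U$ and all $u$,
\[
\frac{\|D_T^{\bz}(\s)u\|^2_{L^2(Z_\s)}}{\|u\|^2_{L^2(Z_\s)}}\le (1+\epsilon')\,\frac{\|D_T^{\bz}(\s')u\|^2_{L^2(Z_{\s'})}}{\|u\|^2_{L^2(Z_{\s'})}}
\]
and symmetrically with $\s$ and $\s'$ interchanged; here I would identify nearby fibers $Z_\s\cong Z_{\s'}$ by a local trivialization of the submersion so that forms can be compared, and $\epsilon'$ is chosen below in terms of $\epsilon$ and $\Lambda_k$.

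Next I would feed this into the Rayleigh--Ritz min-max formula
\[
\l_k(T,\s)=\min_{\substack{V\subset \Omega(\bz_\s;\F_\s)\\ \dim V=k+1}}\ \max_{0\neq u\in V}\ \frac{\|D_T^{\bz}(\s)u\|^2_{L^2}}{\|u\|^2_{L^2}},
\]
where (after trivializing) the competing subspaces $V$ range over the same space for $\s$ and $\s'$. The displayed metric inequality then yields $\l_k(T,\s)\le (1+\epsilon')\,\l_k(T,\s')$, and by symmetry $\l_k(T,\s')\le(1+\epsilon')\,\l_k(T,\s)$. Combining with $\l_k(T,\s),\l_k(T,\s')\le\Lambda_k$ from Lemma \ref{o1} gives
\[
|\l_k(T,\s)-\l_k(T,\s')|\le \epsilon'\,\max\{\l_k(T,\s),\l_k(T,\s')\}\le \epsilon'\,\Lambda_k,
\]
so choosing $\epsilon'=\epsilon/\Lambda_k$ at the start makes the right-hand side $<\epsilon$. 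Since $U$ was obtained from the $T$-independent $\delta$ of Lemma \ref{equicon}, the neighborhood does not depend on $T$, which is exactly the asserted equicontinuity; a standard covering argument over compact $S$ upgrades this to a uniform modulus if desired.

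The main obstacle, and the only point requiring care, is the bookkeeping in identifying nearby fibers: Lemma \ref{equicon} is stated for a fixed manifold $X$ with varying metrics and connections, whereas here the fibers $Z_\s$ themselves move. I would handle this by choosing, for each $\s$, a local smooth trivialization $\pi^{-1}(U)\cong U\times Z_\s$ of the submersion; pulling back $g^{T\bz}$, $h^{\F}$, $\nabla^{\F}$ and the deformation term $dp_T$ through this trivialization produces exactly the setup of Lemma \ref{equicon} with $X=Z_\s$, $l$ parametrizing $\s'\in U$, and $f=p_T$. The key subtlety is that the smoothness of this family (hence the $\delta$) must be uniform in $T$: this holds because the $\s$-dependence of all geometric data is $T$-independent, and the $T$-dependent factor $dp_T$ enters Lemma \ref{equicon} only through the function $f$, for which the lemma's $\delta$ is explicitly declared independent. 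Everything else is the routine min-max argument above.
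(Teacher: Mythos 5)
Your proposal is correct and follows essentially the same approach as the paper: combine the two-sided Rayleigh-quotient comparison from Lemma \ref{equicon} (applied fiberwise via a local trivialization, with $f=p_T$, noting that the $\delta$ there is independent of $f$ and hence of $T$) together with the $T$-independent upper bound $\Lambda_k$ from Lemma \ref{o1} to get $|\l_k(T,\s)-\l_k(T,\s')|\le\epsilon'\Lambda_k$. The paper's proof is a one-line version of exactly this min-max argument, so you have merely filled in the bookkeeping.
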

	\begin{proof}
		By Lemma \ref{equicon} and Lemma \ref{o1}, when $\s'$ is closed to $\s$, $|\l_k(T,\s)-\l_{k}(T,\s')|\leq \epsilon \l_k(T,\s)\leq  \epsilon \Lambda_k.$
	\end{proof}

	Next, it follows from \cite[Lemma 4.2]{Yantorsions} and compactness of $S$ that,
	\begin{lem}\label{limit1}
		
		Let $u\in \Omega(\bz;\F)$ be a unit eigenform w.r.t. an eigenvalue $\leq \l$. Then for $s\in[-2,2]$
		\[\int_{Y}|u|^2(s,y)+|D^{\bz}_Tu|^2(s,y)\dvol_Y\leq C(1+\lambda^2)\]
		if $T$ is large enough. Here the constant $C$ is independent of $T$ and $\s,$ $|\cdot|$ is the metric on $\Lambda^*(\bz)\otimes \F|_{\bz}$ induced by $h^{\F}$ and $g^{T\bz}$.

	\end{lem}

	\begin{proof}[Proof of Theorem \ref{eigencon}]
		Fix $\theta\in S$, \cite[Theorem 3.1]{Yantorsions} implies that $\lim_{T\to\infty}\l_k(T,\s)=\l_k(\s)$. Since $S$ is compact, the uniformness follows from Corollary \ref{equicon1} and continuity of $\l_k(\s)$. 
		
		Similarly, one can show $\lim_{T\to\infty}\tl_k(T,\s)=\l_k(\s)$.
	\end{proof}
	
	Recall that for $u=(u_1,u_2)\in \Omega_{\abs}(Z_1,F_1)\oplus\Omega_{\rel}(Z_2, F_2)$,
	\begin{equation*}
		Q_T(u)(x):=\begin{cases}
			u_i(x), \mbox{ if  $x\in Z_i$,}\\
			\eta(-s)u_1(-1,y)e^{-p_T(s)-T/2}, \mbox{if $x=(s,y)\in [-1,0]\times Y$,}\\
			\eta(s)u_2(1,y)e^{p_T(s)-T/2}, \mbox{if $x=(s,y)\in [0,1]\times Y$.}
		\end{cases}
	\end{equation*}
	It follows from Lemma \ref{limit1} and the construction of $Q_T$ that
	\begin{lem}\label{qtuu}Let $u_i\in \Omega(Z_i;F_i)$ be a unit eigenform w.r.t. an eigenvalue $\leq \l$(i=1,2). Then
		\[\|Q_T(u)-\cE(u)\|^2_{L^2}\leq \frac{C(1+\l^2)}{\sqrt{T}}\|u\|^2_{L^2}.\]
		Here the constant $C$ is independent of $T$ and $\s.$
	\end{lem}
	
	It follows from \cite[Lemma 5.1 and Lemma 5.2]{Yantorsions} and the compactness of $S$ that
	\begin{lem}\label{est1}
		There exists constants $c_1,c_2,c_3,c_4$ and $c_5$ independent of $T$ and $\s$, such that $\l_k(T,\s)\geq u_k(T).$ Here $\{u_k(T)\}_{k=1}^\infty$ is the collection of $4$ copies of $\{v_l(T)+c_4m^{2/(\dim Z-1)}\}_{l=1,m=1}^\infty$ and $2$ copy of $\{c_5 l^{2/\dim Z}\}$, listed in the increasing order and counted with multiplicity. Moreover,  $\{v_k(T)\}_{k=1}^\infty$ is the collection of $\{T\max\{c_1l-c_2,0\}\}_{l=1}^\infty$ and $\{c_3l^2\}_{l=1}^\infty$, listed in the increasing order and counted with multiplicity.
	\end{lem}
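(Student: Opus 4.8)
The plan is to reduce the family statement to the single-fiber estimates \cite[Lemma 5.1 and Lemma 5.2]{Yantorsions} and then absorb the $\s$-dependence by compactness. First I would recall what the cited lemmas give for a fixed fiber $\bz_\s$: a lower bound for $\l_k(T,\s)$ of exactly the stated form, i.e.\ $\l_k(T,\s)\geq u_k(T,\s)$, where $u_k(T,\s)$ is assembled from copies of $v_l(T,\s)+c_4(\s)m^{2/(\dim M-1)}$ and of $c_5(\s)l^{2/\dim M}$, and $v_k(T,\s)$ from $T\max\{c_1(\s)l-c_2(\s),0\}$ and $c_3(\s)l^2$. The point of those estimates is a Poincar\'e-type decomposition of $\Omega(\bz;\F)$ on the neck region $[-2,2]\times Y$ versus the two caps $M_1,M_2$: on the neck the Witten-deformed operator is modelled by a harmonic-oscillator type operator in the $s$-variable (contributing the $T$-linear gaps $T(c_1l-c_2)$ and, from the transverse $Y$-directions, the $c_3l^2$ growth via Weyl's law on $Y$), while on the caps one has a fixed (i.e.\ $T$-independent) elliptic operator whose $k$-th eigenvalue grows like $c l^{2/\dim M}$ by the standard Weyl lower bound, and the $c_4m^{2/(\dim M-1)}$ factor records the Weyl growth on the cross-section $N$ of dimension $\dim M - 1$. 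All of these constants depend a priori on the geometric data $(g^{TZ_\s},h^{F_\s},\nabla^{F_\s})$ of the fiber over $\s$, but this data varies smoothly and $S$ is compact.

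The key step is then the uniformization in $\s$. Here I would invoke Lemma \ref{equicon}: for $\s'$ in a small enough neighborhood of $\s$ (of size independent of $T$, since the $\delta$ there does not depend on $u$ or on $f=p_T$), the Rayleigh quotients for $D^{\bz}_{T}(\s)$ and $D^{\bz}_{T}(\s')$ differ by a multiplicative factor arbitrarily close to $1$, uniformly over all test forms. By the min-max principle this forces $\l_k(T,\s')\geq \tfrac12\l_k(T,\s)$ for all $k$ and all large $T$. Consequently, if the single-fiber bound of \cite{Yantorsions} holds at each $\s$ with constants $c_i(\s)$, then on a fixed neighborhood it holds at $\s'$ with constants $\tfrac12 c_i(\s)$; covering $S$ by finitely many such neighborhoods and taking the minimum of the resulting (finitely many) constants produces universal $c_1,\dots,c_5$ valid for all $\s\in S$ and all $T$ large. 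This is precisely the mechanism already used to prove Lemma \ref{o1} and Corollary \ref{equicon1}, so I would mirror that argument. Finally one checks the combinatorial bookkeeping: since the model spectra $\{v_l(T)\}$ and $\{c_5 l^{2/\dim M}\}$ are now defined with the \emph{uniform} constants, the interleaved sequence $\{u_k(T)\}$ is $\s$-independent, and $\l_k(T,\s)\geq u_k(T)$ for every $\s$.

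I expect the main obstacle to be purely expository rather than mathematical: making sure the neighborhood size in Lemma \ref{equicon} is genuinely independent of $T$ \emph{and} of the deformation parameter inside $p_T$ (including the $e^{T^2}$-scale cutoff $\rho$ appearing in condition (b) of \cref{aw}), so that a finite subcover of $S$ really does the job for all large $T$ simultaneously. Once that is granted, everything else is a translation of \cite[Lemmas 5.1--5.2]{Yantorsions} together with the compactness argument; no new analytic input is needed beyond the equicontinuity lemma and the standard Weyl asymptotics already implicit in the cited single-fiber estimates.
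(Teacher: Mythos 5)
Your proposal matches the paper's argument: the paper proves Lemma \ref{est1} by invoking \cite[Lemmas 5.1 and 5.2]{Yantorsions} for each fixed fiber and then appealing to compactness of $S$, exactly as you describe, and the equicontinuity mechanism you supply via Lemma \ref{equicon} (and a finite subcover) is the same one the paper uses for Lemma \ref{o1} and Corollary \ref{equicon1}. You have simply made explicit the uniformization step that the paper leaves to the reader.
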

	
	\subsection{Convergence of $f_{\la}^{\wedge}\left(C_{t,T}^{\prime}, h^{\bE}\right)$ and the large time contributions}
	Let $V_T=d^{\bz}_T-d^{\bz,*}_T$ and $\F_{t}:=D_{t,T}-\frac{\sqrt{t}}{2}(d^{\bz}_T-d^{\bz,*}_T)$, then all eigenvalues of $V_T$ are pure imaginary. Moreover, $\F_t$ is nilpotent. 
	Similarly, one has $F_{t,i}$ and $F_{t}.$ Moreover, \be\label{limithat1}\F_t|_{Z_i}=F_{t,i}.\ee
	
	We define $\F^j_t$ inductively: set $\F^0_t=\F_t$, and $\F^{j+1}_t=[V_T,\F^j_t].$
	Since $T^H\bm,g^{T\bz}$ and $h^{\F}$ are product-type, by a straightforward computation,
	\begin{lem}\label{fhat0}
		There exists $(T,\s)$-independent $C_j>0$, s.t. $\|\F^j_t\|\leq C_j(1+\sqrt{t}^{-1}).$
	\end{lem}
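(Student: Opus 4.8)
The plan is to write $\F_t$ out explicitly, observe that the only ingredient of $V_T$ which blows up with $T$ is confined to a collar of $N$ where the geometry is a product, and then check that this ingredient is ``transparent'' to the iterated commutators, so that $\F^j_t$ reduces to a $T$‑independent fibrewise operator with the advertised $t$‑behaviour.

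First I would record the shape of $\F_t$. Since $p_T$ is fibrewise a function of the normal coordinate $s$ alone, $dp_T\wedge$ raises the vertical degree $N^Z$ by one, so the rescaling $C'_{t,T}=t^{N^Z/2}d^{\bm}_T t^{-N^Z/2}$ gives $C'_{t,T}=\sqrt t\,d^{\bz}_T+\nabla^{\bE}+t^{-1/2}i_T$; taking adjoints for the $L^2$‑metric $h^{\bE}$ (which is $p_T$‑independent, as we work in the $C_{t,T}$‑picture) and subtracting the $\sqrt t$‑term,
\[\F_t=A_1+t^{-1/2}A_2,\qquad A_1:=\tfrac12\big((\nabla^{\bE})^{*}-\nabla^{\bE}\big),\quad A_2:=\tfrac12\big((i_T)^{*}-i_T\big).\]
The crucial point is that $A_1,A_2$ involve only the horizontal data $(T^H\bm,g^{T\bz},h^{\F})$ through $\nabla^{\bE}$, the fibration curvature $T$ and $h^{\bE}$, and hence do not depend on $p_T$ at all; moreover they are fibrewise differential operators with smooth coefficients, so they are bounded uniformly in $(T,\s)$ by compactness of the fibres and of $S$, with $\|\F_t\|\le\|A_1\|+t^{-1/2}\|A_2\|\le C(1+t^{-1/2})$.

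Next I would write $V_T=V+c(\nabla p_T)$, where $V:=d^{\bz}-d^{\bz,*}$ is $T$‑independent and $c(\nabla p_T)=p_T'\,c\big(\tfrac{\p}{\p s}\big)$ with $c\big(\tfrac{\p}{\p s}\big)=ds\wedge-i_{\p/\p s}$; this last operator has norm $\lesssim T$ but is supported in the product collar $U\cong(-2,2)\times N$ of $N$. One then argues by induction on $j$ that $\F^j_t$ is, in the relevant operator norm, controlled uniformly in $(T,\s)$ by the $T$‑independent operator $(\operatorname{ad}V)^{j}\F_t$. The input is the product structure on $U$: there the Levi‑Civita and $h^{\F}$ data are $s$‑independent and $\tfrac{\p}{\p s}$ is parallel, so $V$ (being of the form $\sum_i(e^i{\wedge}+i_{e_i})\nabla_i$) anticommutes with $c\big(\tfrac{\p}{\p s}\big)$ on $U$, while the $s$‑independent coefficients of $A_1,A_2$ commute with it. Expanding $[V_T,\cdot]=[V,\cdot]+p_T'\big[c\big(\tfrac{\p}{\p s}\big),\cdot\big]$ and using $|p_T'|+|p_T''|\lesssim T$ together with $\operatorname{supp}p_T'\subset U$, one sees that the $p_T$‑dependent contributions produced at each step are again localized on $U$ and stay bounded uniformly in $T$, so inductively $\|\F^j_t\|\le C_j(1+t^{-1/2})$ with $C_j$ independent of $(T,\s)$; the factor $t^{-1/2}$ is inherited solely from the $A_2$‑term.

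The only substantial point is this last induction: verifying that the $O(T)$ ingredient $c(\nabla p_T)$ of $V_T$ contributes only $T$‑bounded terms to the iterated commutators. This is exactly where the hypothesis that $T^H\bm,g^{T\bz},h^{\F}$ are product‑type near $N$ is indispensable — if the fibre metric or $h^{\F}$ varied in the $s$‑direction on the collar, the coefficients of $A_1,A_2$ would fail to be $s$‑independent there, the $p_T'$‑brackets would generate contributions growing with $T$, and the $T$‑uniformity of $C_j$, which is the whole content of the lemma, would be lost.
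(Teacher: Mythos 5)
The overall strategy---decompose $\F_t=A_1+t^{-1/2}A_2$ with $A_1,A_2$ being $T$-independent zeroth-order operators, split $V_T=V+p_T'c(\tfrac{\p}{\p s})$, and run an induction using that the $p_T$-dependent piece is localized in the product collar $U$---is the right one and matches what the paper's ``straightforward computation'' must be. However, there is a concrete error at the heart of the inductive step.

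You assert that ``the $s$-independent coefficients of $A_1,A_2$ commute with $c(\tfrac{\p}{\p s})$'' on $U$. For $A_1=\tfrac12\omega(\bE,h^{\bE})$ this is fine: $\omega(\bE,h^{\bE})$ preserves the fibre-form degree $N^Z$, so on $U$ it is even with respect to the Künneth splitting $\Lambda\R\hat\otimes\Lambda T^*Y$ and therefore commutes with $c(\tfrac{\p}{\p s})$. But $A_2=\tfrac12\bigl((i_T)^*-i_T\bigr)=\tfrac12 c(T)$ is Clifford multiplication by the curvature vector $T\in TY$, which is \emph{odd} in $N^Z$. Since $T\perp\tfrac{\p}{\p s}$, the Clifford relations give $c(\tfrac{\p}{\p s})c(T)+c(T)c(\tfrac{\p}{\p s})=0$: the two operators \emph{anticommute}, not commute. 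With the ordinary commutator one therefore has $\bigl[c(\tfrac{\p}{\p s}),A_2\bigr]=c(\tfrac{\p}{\p s})c(T)\neq0$ on $U$, and hence $p_T'\bigl[c(\tfrac{\p}{\p s}),\F_t\bigr]$ contains a term of size $O(T/\sqrt t)$ rather than $O(1)$, which is precisely the blow-up the lemma is designed to rule out. The same issue propagates through the induction: if $\F_t^{j-1}$ is odd in the $N^Z$-grading, then ``anticommuting with $c(\tfrac{\p}{\p s})$'' does not make $p_T'[c(\tfrac{\p}{\p s}),\F_t^{j-1}]$ small.

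To close the argument you need to keep track of the $\Z_2$-grading and interpret the bracket $\F_t^{j+1}=[V_T,\F_t^j]$ in the graded (super)sense appropriate for the $N^Z$-parity of the entry, or equivalently prove the sharper inductive statement that, on $U$, $\F_t^j$ either commutes or anticommutes with $c(\tfrac{\p}{\p s})$ according to its $N^Z$-parity, and that in both cases the contribution $p_T'[c(\tfrac{\p}{\p s}),\F_t^j]$ vanishes after one more bracket with $V_T$ (or that the bracket is taken as the graded commutator, in which case $\{c(\tfrac{\p}{\p s}),A_2\}=0$ and the induction does close). As written, the blanket ``commute'' for $A_2$ is incorrect, and this is the one point on which the entire $T$-uniformity of the lemma rests, so it cannot be waved away.
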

	
	It's trivial that
	\begin{lem}\label{tri}
		If $\tau$ is pure imagenary and $|\Re(\l)|=1$, then 
		\[|\l-\tau|^{-1}\leq C\frac{|\l|}{|\tau|}\]
		or 
		\[|\l-\tau|^{-1}\leq 1\]
		for some universal constant $C.$
		
	\end{lem}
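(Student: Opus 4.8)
\textbf{Proof proposal for Lemma \ref{tri}.}

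The plan is to do a straightforward case analysis on the relative size of $|\tau|$ compared to a fixed threshold, say $|\tau| \geq 1$ versus $|\tau| < 1$, exploiting that $\l$ has real part of absolute value exactly $1$ while $\tau$ is purely imaginary, so the two can never be too close in a quantitatively controlled way.

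First I would note that since $\Re(\l) = \pm 1$ and $\tau$ is purely imaginary, $\Re(\l - \tau) = \Re(\l) = \pm 1$, hence $|\l - \tau| \geq |\Re(\l-\tau)| = 1$. This already gives the second alternative $|\l-\tau|^{-1} \leq 1$ unconditionally — so in fact the lemma is immediate and the disjunction in the statement is there only because the first bound $|\l-\tau|^{-1} \leq C|\l|/|\tau|$ is the one that will actually be useful later (when $|\tau|$ is small, one wants a bound blowing up like $1/|\tau|$, and when $|\tau|$ is large the factor $|\l|/|\tau|$ is what matters). So the real content is just to package both facts. To get the first alternative in the regime where it beats the trivial one, I would argue: if $|\tau| \geq 1$, then writing $\l = a + i b$ with $|a| = 1$, we have $|\l - \tau| \geq |a| = 1 \geq |\tau|^{-1} \cdot |\tau| \geq$ ... more directly, $|\l|/|\tau| \geq |\Re \l|/|\tau| = 1/|\tau|$, and combined with $|\l - \tau|^{-1} \leq 1$ we need $1 \leq C|\l|/|\tau|$; since $|\l| \geq 1$ and $|\tau|$ is comparable to... this needs $|\tau| \leq C|\l|$, which is false for large $|\tau|$. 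So instead: when $|\tau| \geq |\l|$, use $|\l - \tau| \geq |\tau| - |\l| $ is not helpful either if they're close. The cleanest route: when $|\tau| \leq 2|\l|$, the trivial bound $|\l-\tau|^{-1}\leq 1 \leq C|\l|/|\tau|$ holds with $C = 2$ since $|\l|/|\tau| \geq 1/2$; when $|\tau| > 2|\l|$, then $|\l - \tau| \geq |\tau| - |\l| > |\tau|/2$, so $|\l-\tau|^{-1} < 2/|\tau| \leq 2|\l|/|\tau|$ (using $|\l|\geq 1$), again the first alternative with $C = 2$.

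Thus in every case at least one of the two displayed inequalities holds with a universal constant $C = 2$, which is what we want. I do not anticipate any obstacle here; this is an elementary estimate whose only subtlety is bookkeeping which of the two bounds to invoke in which regime, and the dichotomy $|\tau| \leq 2|\l|$ versus $|\tau| > 2|\l|$ handles it cleanly.
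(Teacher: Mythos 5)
Your proof is correct. The key observation — that $\Re(\l - \tau) = \Re(\l)$ since $\tau$ is purely imaginary, so $|\l - \tau| \geq |\Re(\l)| = 1$ and hence $|\l - \tau|^{-1} \leq 1$ unconditionally — already establishes the disjunction, and your dichotomy $|\tau| \leq 2|\l|$ versus $|\tau| > 2|\l|$ also verifies the first alternative with $C=2$ in all cases (which is the one actually exploited later in the proof of Lemma \ref{fhat1} to obtain decay in $|\mu|$). The paper offers no proof at all here — the lemma is introduced with ``It's trivial that'' — so there is no approach to compare against; your write-up simply supplies the elementary verification the author left implicit.
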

	
	\begin{lem}\label{fhat1}
		Let $u$ be a unit eigenform w.r.t. an eigenvalue $\mu$ for $V_T$ (Moreover, assume that $|\mu|>0$), then for any $j\in\Z^+,t>0$, there exits $(T,\s)$-independent $C_j>0$, such that
		\[\left|\left(\left(f'(D_{t,T})-f'(\sqrt{t}V_T)\right)u,u\right)_{L^2}\right|\leq\frac{C_j(1+\sqrt{t}^{-\dim(S)})}{\sqrt{t}|\mu|^{j}}.\]
		Similarly, one can show that there exits $(T,\s)$-independent $C'_j>0$, such that
		\[\left|\left((-1)^{N^Z}N^Z\left(f'(D_{t,T})-f'(\sqrt{t}V_T)\right)u,u\right)_{L^2}\right|\leq\frac{C'_j(1+\sqrt{t}^{-\dim(S)})}{\sqrt{t}|\mu|^{j}}.\]
	\end{lem}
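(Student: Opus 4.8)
The statement is a resolvent-expansion estimate, so the plan is to write $f'(D_{t,T})-f'(\sqrt t\, V_T)$ as a contour integral of the difference of resolvents and to exploit that $D_{t,T}=\sqrt t\,V_T+\F_t$ with $\F_t$ nilpotent and $[V_T,\cdot]$-derivatives of $\F_t$ controlled by Lemma \ref{fhat0}. Recall $f(a)=a\exp(a^2)$, so $f'$ is entire of order $2$; write
\[
f'(D_{t,T})-f'(\sqrt t\,V_T)=\frac{1}{2\pi\i}\oint_{\Gamma}f'(\l)\Bigl((\l-D_{t,T})^{-1}-(\l-\sqrt t\,V_T)^{-1}\Bigr)\,d\l,
\]
and expand the resolvent difference via the nilpotent perturbation $\F_t$:
\[
(\l-D_{t,T})^{-1}-(\l-\sqrt t\,V_T)^{-1}=\sum_{m\ge1}(\l-\sqrt t\,V_T)^{-1}\bigl(\F_t(\l-\sqrt t\,V_T)^{-1}\bigr)^{m},
\]
a \emph{finite} sum because $\F_t$ is nilpotent and each extra factor raises the nilpotency degree. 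The point is that $V_T$ has purely imaginary spectrum, so on the part of $\Gamma$ with $|\Re\l|=1$ Lemma \ref{tri} gives either $|\l-\sqrt t\,\tau|^{-1}\le 1$ or $|\l-\sqrt t\,\tau|^{-1}\le C|\l|/(\sqrt t\,|\tau|)$; applied to the eigenvalue $\tau=\mu$ this produces the gain $|\mu|^{-j}$ we are after.

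**Key steps, in order.** First I would fix the contour $\Gamma$ to be (a rescaled version of) the boundary of a thin horizontal strip $\{|\Re\l|\le 1\}$ together with two vertical segments far out; since $\sqrt t\,V_T$ is normal with imaginary spectrum, $(\l-\sqrt t\,V_T)^{-1}$ is bounded on $\Gamma$ by the reciprocal distance to the imaginary axis, which is $\ge c$ on the vertical parts and controlled by Lemma \ref{tri} on the horizontal parts. Second, I would substitute the nilpotent expansion above, so that the integrand becomes a finite sum of terms, each a product of resolvents interleaved with copies of $\F_t$. Third — and this is the crucial algebraic manoeuvre — I would commute the $V_T$-resolvents past the $\F_t$'s using the identity $[(\l-\sqrt t\,V_T)^{-1},\F_t]=\sqrt t\,(\l-\sqrt t\,V_T)^{-1}[V_T,\F_t](\l-\sqrt t\,V_T)^{-1}=\sqrt t\,(\l-\sqrt t\,V_T)^{-1}\F^1_t(\l-\sqrt t\,V_T)^{-1}$, iterating until every $\F$-type factor (now some $\F^{j'}_t$) sits to the \emph{right} of all resolvents, acting first on $u$. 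Fourth, I would pair against $u$: the rightmost factor $(\l-\sqrt t\,V_T)^{-1}$ hits the eigenvector $u$ and becomes the scalar $(\l-\sqrt t\,\mu)^{-1}$, on which Lemma \ref{tri} yields a factor $\le C|\l|/(\sqrt t|\mu|)$; each commutation costs a $\sqrt t$ but each $\F^{j'}_t$ is bounded by $C_{j'}(1+\sqrt t^{-1})$ from Lemma \ref{fhat0}, and every resolvent that ends up to the left is bounded on $\Gamma$. Fifth, I would iterate this $j$ times to manufacture $j$ factors of $(\l-\sqrt t\,\mu)^{-1}$ (using Lemma \ref{tri} each time), giving the $|\mu|^{-j}$; the leftover $\sqrt t$'s and $\F$-bounds combine into the stated $\sqrt t^{-1}(1+\sqrt t^{-\dim S})$ prefactor, and the $\oint_\Gamma|f'(\l)|\,|d\l|$ converges because $f'$ decays Gaussianly along the vertical tails of $\Gamma$ (after the $t$-dependent rescaling $\l\mapsto\sqrt t\,\l$ this is exactly where the $\sqrt t^{-\dim S}$ from the number of $\Omega(S)$-degrees comes in — since $D_{t,T}$ contains $\nabla^{\bE,u}$ which is an $\Omega(S)$-valued, hence nilpotent, term, $f'(D_{t,T})$ is a polynomial of degree $\le\dim S$ in that piece). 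Finally, the second inequality is identical: inserting $(-1)^{N^Z}N^Z$ at the left end changes nothing in the resolvent estimates, since that operator is bounded and commutes with nothing we needed it to commute with; one just carries it along.

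**The main obstacle.** The delicate point is bookkeeping the $t$-dependence uniformly in $T$ and $\s$: we need every constant that appears — the bound on the resolvent of $\sqrt t\,V_T$ on $\Gamma$, the bounds $\|\F^{j'}_t\|\le C_{j'}(1+\sqrt t^{-1})$, and the number of terms in the nilpotent expansion — to be independent of $T$ and of the base point $\s$, which is exactly why Lemmas \ref{fhat0} and \ref{tri} were stated with $(T,\s)$-independent constants, and why the product assumption near $N$ was imposed. Concretely, the awkward region is small $t$: there both $\F^{j'}_t$ and the commutators blow up like $t^{-1/2}$, and one must check that the $j$ gains of $(\sqrt t|\mu|)^{-1}$ from Lemma \ref{tri}, against the alternative bound $|\l-\sqrt t\mu|^{-1}\le1$, are combined correctly so that the net power of $\sqrt t$ is exactly $-1-\dim S$ and not worse; the clean way is to choose at each step whichever of the two alternatives in Lemma \ref{tri} keeps the running estimate summable, i.e. use the $|\l|/(\sqrt t|\mu|)$ branch for the first $j$ resolvents touching $u$ and the trivial branch thereafter. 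Once that choice is made the estimate falls out, and the large-$t$ behaviour (relevant for the $t\ge1$ statement in Theorem \ref{intnew1}) is the easy regime where $\F_t$ and its commutators are uniformly bounded.
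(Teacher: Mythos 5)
Your plan reproduces the overall mechanism the paper actually uses --- contour integral representation of $f'(D_{t,T})-f'(\sqrt t\,V_T)$, the finite nilpotent expansion in $\F_t$, the commutator identity (\ref{fhat12}) to move $V_T$-resolvents past $\F$-type factors, and the bounds of Lemmas \ref{tri} and \ref{fhat0} --- so the strategy is the same. But there is a genuine gap in the step ``iterate $j$ times to manufacture $j$ factors of $(\l-\sqrt t\,\mu)^{-1}$.'' Iterating the commutator identity does not just produce extra resolvent factors; it produces a \emph{telescoping} sum. For the $l=1$ term one gets
\[
(\l-\sqrt t\,V_T)^{-1}\F_t(\l-\sqrt t\,V_T)^{-1}=\sum_{k=0}^{j-1}\sqrt t^{\,k}(\l-\sqrt t\,V_T)^{-(k+2)}\F_t^{k}+(\text{remainder}),
\]
and only the remainder has the high power of the resolvent on which your count of $j$ factors rests. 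Each boundary term, paired with $u$, collapses to $\sqrt t^{\,k}(\l-\sqrt t\,\mu)^{-(k+2)}(\F_t^{k}u,u)_{L^2}$, which carries only a $|\mu|^{-(k+2)}$ gain; for $k$ small this is far short of $|\mu|^{-j}$. The $k=0$ term is fatal: after the contour integration it is controlled by $|f''(\sqrt t\,\mu)|\,|(\F_t u,u)_{L^2}|$, and writing $x:=\sqrt t\,|\mu|$ and sending $|\mu|\to\infty$ with $x$ held fixed (so $\sqrt t\to 0$), this term behaves like $|\mu|$ whereas the target bound $C_j(1+\sqrt t^{-\dim S})/(\sqrt t\,|\mu|^{j})$ behaves like $|\mu|^{\,1+\dim S-j}$, which tends to $0$ once $j>\dim S+1$. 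No choice between the two branches of Lemma \ref{tri} repairs this, and the proof of Theorem \ref{thmfhat} in fact needs $j$ larger than $\dim \bm$, i.e.\ well beyond $\dim S$.

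What the paper does to close exactly this hole is the step your proposal omits: in (\ref{fhat111}) it is observed that, after pairing with the $V_T$-eigenform $u$, the $l=1$ contribution reduces to $\int_\gamma f'(\l)(\l-\sqrt t\,\mu)^{-2}\,d\l\cdot(\F_t u,u)_{L^2}$ and \emph{vanishes identically}, and the same cancellation is then invoked (``proceeding as in (\ref{fhat111})'') for the telescoping boundary terms appearing in (\ref{fhat13})--(\ref{fhat131}) for $l\ge2$. That exact vanishing, which exploits the algebraic structure of $\F_t$ and the fact that $u$ is a $V_T$-eigenform, is the crux of the lemma; your proposal treats the telescoping as if only the remainder survives, and the estimate you are aiming for is simply false for the leftover boundary terms.
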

	\begin{proof}
		Let $\gamma$ be the oriented contour given by $\{z\in\C:|\Re(z)|=1\}.$
		Proceeding as in the proof of \cite[Theorem 2.13]{bismut1995flat},
		\begin{align}\begin{split}\label{fhat11}
				&\ \ \ \ f^{\prime}\left(D_{t,T}\right)-f^{\prime}\left(\sqrt{t}V_T\right)\\
				&=\sum_{l=1}^{\dim S}\int_{\gamma}f'(\l)\left((\l-\sqrt{t}V_T)^{-1}\F_t\right)^l(\l-\sqrt{t}V_T)^{-1}d\l.
		\end{split}\end{align}
		
		First, notice that
		\begin{align}\begin{split}\label{fhat111}
				&\ \ \ \ \left|\left(\int_{\gamma}f'(\l)(\l-\sqrt{t}V_T)^{-1}\F_t(\l-\sqrt{t}V_T)^{-1}ud\l,u\right)_{L^2}\right|\\
				&=\left|\left(\int_{\gamma}f'(\l)\F_t(\l-\sqrt{t}V_T)^{-1}ud\l,(\bar{\l}+\sqrt{t}V_T)^{-1}u\right)_{L^2}\right|\\
				&=\left|\left(\int_{\gamma}f'(\l)(\l-\sqrt{t}\mu)^{-2}\F_tud\l,u\right)_{L^2}\right|=0.
		\end{split}\end{align}
		Recall that $\F^0_t:=\F_t$, and $\F^{j+1}_t:=[V_T,\F^j_t].$
		Through a simple calculation,
		\be\label{fhat12}
		[\F_t^j,(\l-\sqrt{t}V_T)^{-1}]=\pm(\l-\sqrt{t}V_T)^{-1}\sqrt{t}\F_t^{j+1}(\l-\sqrt{t}V_T)^{-1}.
		\ee
		Consequently, by (\ref{fhat12}), Lemma \ref{tri} and Lemma \ref{fhat0}, if $\l\in\gamma$,
		\begin{align}\begin{split}\label{fhat13}
				&\ \ \ \ \left(\left((\l-\sqrt{t}V_T)^{-1}\F_t\right)^2(\l-\sqrt{t}V_T)^{-1}u,u\right)\\
				&=\left(\sum_{k=1}^{j-1}(\l-\sqrt{t}V_T)^{-(k+1)}\sqrt{t}^{k}\F_t^{k-1}\F_t(\l-\sqrt{t}V_T)^{-1}\right.\\
				&\left.+(\l-\sqrt{t}V_T)^{-j}\sqrt{t}^{j}\F_t^{j-1}((\l-\sqrt{t}V_T)^{-1})\F_t(\l-\sqrt{t}V_T)^{-1}u,u\right)\\
				&\leq \left(\sum_{k=1}^{j-1}(\l-\sqrt{t}V_T)^{-(k+1)}\sqrt{t}^{k}\F_t^{k-1}\F_t(\l-\sqrt{t}V_T)^{-1}u,u\right)\\
				&+C(1+\sqrt{t}^{-2})\left|(\l-\sqrt{t}\mu)^{-(j+1)}\sqrt{t}^{j}\right|\\
				&\leq \left(\sum_{k=1}^{j-1}(\l-\sqrt{t}V_T)^{-(k+1)}\sqrt{t}^{k}\F_t^{k-1}\F_t(\l-\sqrt{t}V_T)^{-1}u,u\right)\\
				&+C(1+\sqrt{t}^{-2})|\l|^{j+1}|\mu|^{-(j+1)}\sqrt{t}^{-1};\\
		\end{split}\end{align}
		similarly,
		\begin{align}\begin{split}\label{fhat131}
				&\ \ \ \ \left(\left((\l-\sqrt{t}V_T)^{-1}\F_t\right)^2(\l-\sqrt{t}V_T)^{-1}u,u\right)\\
				&\geq \left(\sum_{k=1}^{j-1}(\l-\sqrt{t}V_T)^{-(k+1)}\sqrt{t}^{k}\F_t^{k-1}\F_t(\l-\sqrt{t}V_T)^{-1}u,u\right)\\
				&-C(1+\sqrt{t}^{-2})|\l|^{j+1}|\mu|^{-(j+1)}\sqrt{t}^{-1}.\\
		\end{split}\end{align}

		By (\ref{fhat13}) and (\ref{fhat131}), proceeding as in (\ref{fhat111}), one can see that
		
		\begin{align}\begin{split}
				&\ \ \ \ \left|\left(\int_{\gamma}f'(\l)\left((\l-\sqrt{t}V_T)^{-1}\F_t\right)^2(\l-\sqrt{t}V_T)^{-1}ud\l,u\right)_{L^2}\right|\leq \frac{C_j(1+\sqrt{t}^{-2})}{\sqrt{t}|\mu|^{j+1}}.
		\end{split}\end{align}
		
		Similarly, one can show
		\begin{align}\begin{split}
				&\ \ \ \ \left|\left(\int_{\gamma}f'(\l)\left((\l-\sqrt{t}V_T)^{-1}\F_t\right)^l(\l-\sqrt{t}V_T)^{-1}ud\l,u\right)_{L^2}\right|\leq \frac{C_{j,l}(1+\sqrt{t}^{-l})}{\sqrt{t}|\mu|^{j+1}}.
		\end{split}\end{align}
	\end{proof}
	We also have
	\begin{lem}\label{fhat4}
		Let $u$ be an eigenform of $\Delta_i$ w.r.t. eigenvalue $\mu$, then for $|\Re(\l)|=1$,
		\[\|(\l-D_{t,T})^{-1}Q_Tu-\cE((\l-D_{t,i})^{-1}u)\|^2_{L^2}\leq \frac{C(|\l|+1+t^{-1})(1+t+t\mu^2+t\mu^4)\|u\|_{L^2}^2}{\sqrt{T}}.\]
	\end{lem}
	\begin{proof}
		Let $V_i=d^{Z_i}-d^{Z_i,*}$, then on $[-2,2]\times Y$, $V_i=V_i^\R+V_i^Y$, where $V_i^\R=ds\wedge\nabla_{\frac{\p}{\p s }} +i_{\frac{\p}{\p s}}\wedge\nabla_{\frac{\p}{\p s}}.$
		
		It follows from \eqref{qtdt} that
		\be\label{fhat421}
		(\l-D_{t,T})Q_T(\l-D_{t,i})^{-1}u|_{Z_i}=Q_T(u)|_{Z_i}.
		\ee
		Moreover, by trace formula and the fact that $\|(\l-D_{t,i})^{-1}\|\leq 1$, proceed as in Lemma \ref{fhat1},
		\be\label{fhat422}
		\int_{Y} |(\l-D_{t,i})^{-1}u_i|^2((-1)^i,y)+|\sqrt{t}V_i^{Y} (\l-D_{t,i})^{-1}u_i|^2((-1)^{i},y)\leq \frac{C(t\mu^2+t\mu^4+1+t)\|u\|^2_{L^2}}{\sqrt{T}} 
		\ee
		By (\ref{fhat421}), (\ref{fhat422}) and (\ref{qtdt}),
		\be\label{fhat42}\|(\l-D_{t,T})Q_T(\l-D_{t,i})^{-1}u-Q_T(u)\|^2_{L^2}\leq\frac{C(|\l|+1+t^{-1})(1+t+t\mu^2+t\mu^4)\|u\|_{L^2}^2}{\sqrt{T}}.\ee
		Since we also have $\|(\l-D_{t,T})^{-1}\|\leq1$ for $|\Re(\l)|=1$, (\ref{fhat42}) implies that
		\be\label{fhat43}\|Q_T(\l-D_{t,i})^{-1}u-(\l-D_{t,T})^{-1}Q_T(u)\|^2_{L^2}\leq\frac{C(|\l|+1+t^{-1})(1+t+t\mu^2+t\mu^4)\|u\|_{L^2}^2}{\sqrt{T}}.\ee
		By Lemma \ref{qtuu} and (\ref{fhat43}), the lemma follows.
		
	\end{proof}

	\begin{proof}[Proof of Theorem \ref{limitfhat}]
		
		Let $\{u_k\}_{k=1}^\infty$ be eigenforms of $\Delta_T$ such that $\{u_k\}$ forms an orthonormal basis.

		Fix $\epsilon>0$. By Lemma \ref{fhat1}, there exists $k_0=k_0(\epsilon,t)>0$, such that
		\be\label{fhat21}
		\sum_{k\geq k_0} \left|\left(N^{\bz}\left(f'(D_{t,T})-f'(\sqrt{t}V_T)\right)u_k,u_k\right)_{L^2}\right|<\epsilon.
		\ee

		By Lemma \ref{est1}, we may assume that 
		\begin{align}\begin{split}\label{epsilon}
				&\ \ \ \ \sum_{k\geq k_0}\left|\left(N^{\bz}f'(\sqrt{t}V_T)u_k,u_k\right)_{L^2}\right|\leq C \sum_{k\geq k_0} (1+\l_{k}^2(T,\s))e^{-t\l_{k}(T,\s)}<\epsilon.
		\end{split}\end{align}
		
		By (\ref{fhat21}) and (\ref{epsilon}),
		\be\label{fhat22}
		\sum_{k\geq k_0} \left|\left(N^{\bz}f'(D_{t,T})u_k,u_k\right)_{L^2}\right|<2\epsilon.
		\ee
		
		
		Similarly, one may assume that
		\be\label{fhat23}
		\sum_{k\geq k_0} \sum_{i=1}^2\left|\left(N^{Z_i}f'(D_{t,i})v_k,v_k\right)_{L^2}\right|<2\epsilon.
		\ee

		Let $\{v_k\}_{k=1}^{k_0}$ be orthonormal eigenforms with respect to eigenvalues $\{\l_k\}_{k=1}^{k_0}.$
		Let $E_{k_0}(\bz,\F)$ be the space generated by eigenforms with respect to eigenvalues $\l_1(T,\s),...\l_{k_0}(T,\s).$ Set $\P^{k_0}(T)$ be the orthogonal projection w.r.t. $E_{k_0}(\bz,\F).$
		Proceeding as in the proof of Propositon \ref{last2p}, one can see that $E_{k_0}(\bz,\F)$ is generated by $\{\P^{k_0}(T)Q_Tv_k\}_{k=1}^{k_0}$ if $T$ is large.
		
		Moreover,
		\be\label{epsilon3}
		\|\P^{k_0}(T)Q_Tv_k-Q_Tv_k\|^2_{L^2}\leq \frac{C(\l_{k_0}(T,\s)+1)}{\sqrt{T}},
		\ee
		\be\label{epsilon31}
		\|v_k-Q_Tv_k\|^2_{L^2}\leq \frac{C(\l_{k_0}(T,\s)+1)}{\sqrt{T}}.
		\ee

		Let $\{u_k(T)\}$ be the Gram-Schmidt Orthogonalization of $\{\P^{k_0}(T)Q_Tv_k\}_{k=1}^{k_0}$. Then Lemma \ref{qtuu}, \eqref{epsilon3} and \eqref{epsilon31} imply that

		\be\label{epsilon4}
		\|u_k(T)-Q_Tv_k\|^2_{L^2}\leq \frac{C(\l_{k_0}(T,\s)+1)}{\sqrt{T}},\ee
		
		Procceding as in the proof of \cite[Theorem 2.13]{bismut1995flat}, one can show that there exists $(T,\s)$-independent $C>0,$ such that
		\be\label{fhat32}\left\|\varphi f^{\prime}\left(D_{t,T}\right)-\P^{\delta}(T)\varphi f^{\prime}\left(\P^{\delta}(T)D^{\delta,T}_{t}\P^{\delta}(T)\right)\P^{\delta}(T)\right\|\leq \frac{C}{\sqrt{t}}.\ee
		(Comparing with Theorem \ref{intnew1}, we are looking at operator norm, instead of trace.)

		Let $f'_{\la}(D_{t,T}):=\varphi\frac{N^\bz}{2}\Big( f^{\prime}\left(D_{t,T}\right)-\P^{\delta}(T)f^{\prime}\big(\P^{\delta}(T)D^{\delta,T}_{t}\P^{\delta}(T)\big)\P^{\delta}(T)\Big).$
		
		Hence, by functional calculus, (\ref{epsilon4}), (\ref{fhat32}), Lemma \ref{o1} and Lemma \ref{fhat4}, there exists $T(\epsilon,t)>0$, such that when $T>T(\epsilon,t)$
		\begin{align}\begin{split}\label{epsilon5}
				&\ \ \ \ \|\sum_{k=1}^{k_0}\left(f'_{\la}(D_{t,T})u_k(T),u_k(T)\right)_{L^2}-A(t)\|_{L^\infty}\\
				&\leq \|\sum_{k=1}^{k_0}\left(f'_{\la}(D_{t,T})Q_Tv_k,Q_Tv_k\right)_{L^2}-A(t)\|_{L^\infty}+\epsilon \leq 2\epsilon. \\
		\end{split}\end{align}
		Here for simplicity, set $$A(t):=\sum_{k=1}^{k_0}\sum_{i=1}^2\left(\varphi\frac{N^{Z_i}}{2}\left( f^{\prime}\left(D_{t,i}\right)- \P_if^{\prime}(\P_iD_{t}^{\H_i}\P_i)\P_i \right)v_k,v_k\right)_{L^2}.$$ 
		By (\ref{fhat22}), (\ref{fhat23}) and (\ref{epsilon5}), 
		$$\lim_{T\to\infty}f_{\la}^{\wedge}\left(C_{t,T}^{\prime}, h^{\bE}\right)=\sum_{i=1}^2f_{\la}^{\wedge}\left(C_{t,i}^{\prime}, h^E\right).$$
		Similarly, one can show
		\[\lim_{T\to\infty}\sum_{i=1}^2f_{\la}^{\wedge}\left(\tC_{t,T,i}^{\prime}, h_T^\bE\right)=\sum_{i=1}^2f_{\la}^{\wedge}\left(C_{t,i}^{\prime}, h^E\right).\]
	\end{proof}
	\begin{proof}[Proof of Theorem \ref{larcon}]
		By Lemma \ref{est1}, Lemma \ref{fhat1} and Theorem \ref{eigencon}, proceeding as in the proof of \cite[Theorem 3.2]{Yantorsions}, one can see that there exists a measurable function $G(t)$ on $[1,\infty)$, s.t. $G(t)/t$ is $L^1([1,\infty)$-integrable ($G$ is independent of $T$ and $\s$). Moreover,
		\[|f_{\la}^{\wedge}\left(C_{t,T}^{\prime}, h^{\bar{E}}\right)|\leq G(t).\]
		
		By Theorem \ref{limitfhat} and the dominate convergence theorem,
		\begin{align*}&\ \ \ \ \lim_{T\to\infty} \mathcal{T}_{\la}^{\mL}\left(T^H \bm, g^{T \bz}, h^{\F}\right)(T)=\sum_{i=1}^2\mathcal{T}_{\la,i}^{\mL}\left(T^H M_i, g^{T Z_i}, h^{F_i}\right).\end{align*}
		Similarly,
		\[\lim_{T\to\infty}\sum_{i=1}^2\mathcal{T}_{\la,i}^{\mL}\left(T^H \bm_i, g^{T \bz_i}, h_T^{\F_i}\right)(T)=\sum_{i=1}^2\mathcal{T}_{\la,i}^{\mL}\left(T^H M_i, g^{T Z_i}, h^{F_i}\right).\\
		\]
	\end{proof}

	\section{The Small Time Contributions}\label{cons}
	\subsection{Several Hodge Laplacians}
	To show the gluing formula for $f^{\wedge}$, we introduce several Hodge Laplacians.
	
	Let $\Delta_{B,1}^\R$ be the Hodge Laplacian on $[-2,-1]$ with absolute boundary conditions. It's easy to see that $\ker(\Delta_{B,1}^\R)$ is one-dimensional and generated by constant functions. Thus, \be \label{tracer}\Tr_s((1+2\Delta_{B,1})e^{-t\Delta_{B,1}^\R})=\lim_{t\to\infty}\Tr_s((1+2\Delta_{B,1})e^{-t\Delta_{B,1}^\R})=1.\ee
	
	Let $\Delta_{B,2}^\R$ be the Hodge Laplacian on $[1,2]$ with relative boundary conditions. Similarly, \be \label{tracer1}\Tr_s((1+2\Delta_{B,2})e^{-t\Delta_{B,2}^\R})=-1.\ee
	
	
	Let $\bar{\Delta}_{B}$ be the Hodge laplacian on $\Omega([-2,2])$  satisfying the absolute boundary condition on $-2$, and relative boundary condition on $2$.

	We can also regards $p_T$ as a smooth function in $(-2,2)$, and let ${\Delta}^\R_T$ be the Witten Laplacian on $(-2,2)$ with respect to $p_T,$ with absolute boundary condition on $-2$, and relative boundary condition on $2$.

	\subsection{Gluing formulas for $f^{\wedge}\left(C_{t,i}^{\prime}, h^E\right)$ and $f^{\wedge}\left(C_{t,T}^{\prime}, h^{\bar{E}}\right)$}
	
	Let $D_{t,Y}:=D_{t}|_{Y}$.
	Let $\eta_i(i=1,2)$ be a smooth function on $(-\infty,\infty)$ satisfying
	\begin{enumerate}
		\item $0\leq\eta_i\leq1$;
		\item $\eta_1\equiv1$ in $(-\infty,-3/2)$,$\eta_1\equiv 0$ in $(-5/4,\infty)$;
		\item $\eta_2\equiv1$ in $(3/2,\infty)$,$\eta_2\equiv 0$ in $(-\infty,5/4)$.
	\end{enumerate}
	We can think $\eta_i$ as a function on $Z_i(i=1,2). $
	
	Let $\tf(a)=(1+2a)e^a$, then $\tf(a^2)=f'(a).$
	Proceeding as in \cite[\S 6]{Yantorsions} or \cite[\S 13(b)]{bismut1991complex}, since $T^HM,g^{TZ}$ and $h^F$ are porduct-type near $N$, for some $C,c>0$,
	\begin{align}\begin{split}\label{heatbd1}
			&\ \ \ \ \Big\|\sum_{i=1}^2\varphi \Tr_s\left(N^{Z} f'\left(D_{t,i}\right)\right)-\sum_{i=1}^2\varphi \Tr_s\left(N^{Z}\eta_i f'\left(D_{t,i}\right)\right)\\
			&-\sum_{i=1}^2\varphi \Tr_s\left(N^{Z} f'\left(D_{t,Y}\right)\otimes \tf(t\Delta^\R_{B,i})\right)\\
			&+\sum_{i=1}^2\varphi \Tr_s\left(N^{Z}\eta_i f'\left(D_{t,Y}\right)\otimes \tf(t\bar{\Delta}_{B})\right)\Big\|_{L^\infty}\leq C\exp(-c/t).
	\end{split}\end{align}
	Next, notice that $[-2,2]\times Y$, the number operator can be decomposed as $N^Z=N^{Y}+N^\R$ canonically (Here $N^Y$ and $N^\R$ are the number operator on $Y$ and $\R$ components respectively). By (\ref{tracer}), (\ref{tracer1}) and \cite[Theorem 3.15]{bismut1995flat},
	\begin{align}\begin{split}\label{heatbd2}
			&\ \ \ \ \sum_{i=1}^2\varphi \Tr_s\left(N^{Z} f'\left(D_{t,Y}\right)\otimes \tf(t\Delta^\R_{B,i})\right)\\
			&=\sum_{i=1}^2\varphi \Tr_s\left(N^{Y} f'\left(D_{t,Y}\right)\otimes \tf(t\Delta^\R_{B,i})\right)+\sum_{i=1}^2\varphi \Tr_s\left( f'\left(D_{t,Y}\right)\otimes N^{\R}\tf(t\Delta^\R_{B,i})\right).\\
			&=\sum_{i=1}^2\chi(Y)\rank(F)\Tr_s(N^{\R}\tf(t\Delta^\R_{B,i})).\\
	\end{split}\end{align}
	Similarly, for some $(T,\s)$-independent $C,c>0$,
	\begin{align}\begin{split}\label{heatm1}
			&\ \ \ \ \Big\|\varphi \Tr_s\left(N^{\bz} f'\left(D_{t,T}\right)\right)-\sum_{i=1}^2\varphi \Tr_s\left(N^{Z}\eta_i f'\left(D_{t,i}\right)\right)\\
			&-\varphi \Tr_s\left(N^{Z} f'\left(D_{t,Y}\right)\otimes \tf(t\Delta^\R_{T})\right)\\
			&+\sum_{i=1}^2\varphi \Tr_s\left(N^{Z}\eta_i f'\left(D_{t,Y}\right)\otimes \tf(t\bar{\Delta}_{B})\right)\Big\|_{L^\infty}
			\\ &\leq C\exp(-c/t).
	\end{split}\end{align}

	Moreover, $\Tr_s((1+2\Delta^{\R}_T)e^{-t{\Delta}_T^\R})=\lim_{t\to\infty}\Tr_s((1+2\Delta^{\R}_T)e^{-t{\Delta}_T^\R})=\dim(\ker({\Delta}_T^\R)_0)-\dim(\ker({\Delta}_T^\R)_1)$. Here $\ker({\Delta}_T^\R)_i$ denotes the space of harmonic $i$-forms$(i=0,1)$. Since $p_T$ is odd, one can see easily that if $u(s)\in\ker({\Delta}_T^\R)_0$, then $u(-s)ds\in\ker({\Delta}_T^\R)_1.$
	
	As a result, $\Tr_s((1+2\Delta^{\R}_T)e^{-t{\Delta}_T^\R})=0$.
	Proceeding as before,
	\begin{align}\begin{split}\label{heatm2}
			\ \ \ \ &\varphi \Tr_s\left(N^{Z} f'\left(D_{t,Y}\right)\otimes \tf(t\Delta^\R_{T})\right)=\chi(Y)\rank(F)\Tr_s(N^{\R}\tf(t\Delta_{T}^\R)).
	\end{split}\end{align}

	\subsection{Proof of Theorem \ref{main}}
	For a differential form $w$, let $w^0$ denote its degree $0$ component, and $w^+:=w-w^0.$
	
	Proceeding as in \cite[Proposition 2.18]{bismut1995flat}, for some $(T,\s)$-independent $C,$
	\begin{align}\begin{split}\label{eq100}
			&\ \ \ \ \left\|\varphi \operatorname{Tr}_s\left(\frac{N^Z}{2} \P^{\delta}(T)f^{\prime}\left(\P^{\delta}(T)D^{\delta,T}_{t}\P^{\delta}(T)\right)\P^{\delta}(T)\right)\right.\\
			&-\left.\sum_{i=1}^2\varphi \operatorname{Tr}_s\left(\frac{N^Z}{2} \P_if^{\prime}\left(\P_iD^{\H_i}_{t}\P_i\right)\P_i\right)\right\|_{L^\infty}\leq Ct.
	\end{split}\end{align}
	It follows from (\ref{heatbd1}), (\ref{heatbd2}), (\ref{heatm1}), (\ref{heatm2}), (\ref{eq100}), Theorem \ref{thmfhat} and dominated convergence theorem that
	
	\[\left(\mathcal{T}_{\la}^{\mS}\left(T^H \bm, g^{T \bz}, h^{\F}\right)(T)\right)^+=\sum_{i=1}^2\left(\mathcal{T}_{\la,i}^{\mS}\left(T^H M_i, g^{T Z_i}, h^{F_i}\right)\right)^++o(1).\]

	It follows from \cite[Theorem 3.3]{Yantorsions} that
	\begin{align*}&\ \ \ \ \left(\mathcal{T}_{\la}^{\mS}\left(T^H \bm, g^{T \bz}, h^{\F}\right)(T)\right)^0\\
		&=\sum_{i=1}^2\left(\mathcal{T}_{\la,i}^{\mS}\left(T^H M_i, g^{T Z_i}, h^{F_i}\right)\right)^0-(T-\log(2))\chi(Y)\rank(F)/2+o(1).\end{align*}
	
	Similarly, one can show that
	\[\left(\mathcal{T}_{\la,i}^{\mS}\left(T^H \bm_i, g^{T \bz_i}, h_T^{\F_i}\right)(T)\right)^+=\left(\mathcal{T}_{\la,i}^{\mS}\left(T^H M_i, g^{T Z_i}, h^{F_i}\right)\right)^++o(1),\]
	and 
	\begin{align*}&\ \ \ \ \left(\mathcal{T}_{\la}^{\mS}\left(T^H \bm, g^{T \bz}, h^{\F}\right)(T)\right)^0\\
		&=\sum_{i=1}^2\left(\mathcal{T}_{\la,i}^{\mS}\left(T^H M_i, g^{T Z_i}, h^{F_i}\right)\right)^0-T\chi(Y)\rank(F)/4+o(1).\end{align*}

	\section{Small Eigenvalues and Mayer-Vietoris Sequences}\label{lastreal}
	From now on, we assume that $T>0$ is large enough.
	
	
	\subsection{Some estimate of harmonic forms on the tube}
	Let $w\in\H(\bm_i,\F_i)(T)$ such that $\|w\|_{L^2,T}=1$.
	Notice that $\tilde{\Delta}_T=e^{p_T}\Delta_Te^{-p_T}$. By \cite[Lemma 4.3]{Yantorsions},
	\begin{prop}\label{agmon1}
		For $w\in \H(\bm_1,\F_1)(T)$, any $l\in\Z^+$, there exists $(T,\s)$-independent constant $C_l$, s.t.
		\[\int_{-1/2}^0\int_Y|e^{-p_T}w|^2ds\dvol_Y\leq \frac{C_l}{T^l};\]
		for $w\in \H(\bm_2,\F_2)(T)$,
		\[\int_{0}^{1/2}\int_Y|e^{-p_T}w|^2ds\dvol_Y\leq \frac{C_l}{T^l}.\]
	\end{prop}
	\begin{proof}
		\def\bw{\bar{w}}
		Let $\bw=e^{-p_T}w$, then $\|\bw\|_{L^2}=1$, and $\Delta_T\bw=0.$
		We may as well assume that $w\in\H(\bm_1,\F_1)(T).$
		Let $\eta_1$ be a nonnegative bounded smooth function on $\bm_1$, such that $\eta_1|_{[-\frac{1}{2},0]\times Y}\equiv1$, $\eta_1|_{\bm_1-[-3/4,0]\times Y}\equiv 0$, $|\nabla \eta_1|\leq 64$. Let $\bw=\a+\b ds$, then
		by Lemma \ref{limit1} and integration by parts and notice that $\p_sp_T\geq 0$,
		\begin{align}\begin{split}\label{new12}
				&\ \ \ \ 0= \int_{\bz_1}\lan\Delta_T\bw,\eta_1^2\bw\ran\dvol_{\bz_1}\\
				&=\int_{\bz_1}\lan D^{\bz}\bw,D^{\bz}\eta_1^2\bw\ran+\lan L_{T}\bw,\eta^2_1\bw\ran+|\nabla p_T|^2\eta_1^2|\bw|^2\dvol_{\bz_1}\\
				&-\int_Y \lan ds\wedge \bw, d^{\bz}\bw\ran(0,y)\dvol_Y\\
				&=\int_{\bz_1}\lan D^{\bz}\bw,D^{\bz}\eta_1^2\bw\ran+\lan L_{T}\bw,\eta^2_1\bw\ran+|\nabla p_T|^2\eta_1^2|\bw|^2\dvol_{\bz_1}\\
				&-\int_Y \lan ds\wedge \bw,d_T\bw\ran(0,y)\dvol_Y+\int_Y\partial_s p_T\lan\a,\a\ran(0,y)\dvol_Y\\
				&\geq \int_{\bz_1}\lan D^{\bz}\bw,D^{\bz}\eta_1^2\bw\ran+\lan L_{T}\bw,\eta^2_1\bw\ran+|\nabla p_T|^2\eta_1^2|\bw|^2\dvol_{\bz_1}\\
				&\geq\int_{\bz_1}\eta_1^2\lan D^{\bz}\bw,D^{\bz}\bw\ran-|\eta_1'\eta_1||\lan D^{\bz}\bw,\bw\ran|+\lan L_{T}\bw,\eta_1^2\bw\ran+|\nabla p_T|^2\eta_1^2|\bw|^2\dvol_{\bm}\\
				&\geq\int_{\bz_1}\eta_1^2\lan D^{\bz}\bw,D^{\bz}\bw\ran/2-4|\eta_1'|^2|\bw|^2+C'T^2\eta_1^2|\bw|^2\dvol_{\bz_1},\\
		\end{split} \end{align}
		where $L_T$ is some linear operator, such that restricted on $[-1,0]\times Y$, $L_T\bw=-p_T''(s)\a+p_T''\b ds.$ 
		(\ref{new12}) implies that
		\be \label{new1211}
		\int_{-1/2}^{0}\int_Y|\bw|^2\dvol_Yds\leq \frac{C\int_{-3/4}^{0}\int_Y|\bw|^2\dvol_Yds}{T^2}\leq \frac{C}{T^2}.
		\ee
		
		Similarly, one can show that 
		\be\label{new1212}
		\int_{-3/4}^{0}\int_Y|\bw|^2\dvol_Yds\leq \frac{C\int_{-7/8}^{0}\int_Y|\bw|^2\dvol_Yds}{T^2}.
		\ee
		It follows from (\ref{new1211}) and (\ref{new1212}) that
		\be\label{new1213}
		\int_{-1/2}^{0}\int_Y|\bw|^2\dvol_Yds\leq \frac{C'\int_{-7/8}^{0}\int_Y|\bw|^2\dvol_Yds}{T^4}\leq \frac{C'}{T^4}.
		\ee
		Keep on doing this, one has Proposition \ref{agmon1} for all $l\in\Z^+.$
		
	\end{proof}

	\begin{prop}\label{agmon2}
		For any $l\in\Z^+,$
		\[\|\tP^{\delta}(T)\cE(w)-\cE(w)\|_{L^2,T}\leq \frac{C_l}{T^l}.\]
	\end{prop}
	\begin{proof}
		
		We may as well assume that $w\in \H(\bz_1,\F_1)(T)$. Let $\eta\in C_c^\infty(\R)$, s.t. $\eta(s)=1$ if $s\in(-\infty,-1/8)$ and $\eta|_{[-1/16,0]}\equiv0$, we can treat $\eta$ as a smooth function on $\bz.$
		
		By Proposition \ref{agmon1},
		\be\label{agmon11}\|w-\eta w\|_{L^2,T}\leq\frac{C_l}{T^l}\ee
		and
		\be\label{agmon12}\|\tilde{D}_T\eta w\|_{L^2,T}\leq\left\||\eta'||w|\right\|_{L^2,T}\leq\frac{C_l}{T^l}.\ee

		Proceeding as in the proof of Proposition \ref{last1p}, the proposition follows.

	\end{proof}

	By Lemma \ref{limit1}, 
	\begin{lem}\label{agmon3}
		When $|s-(-1)^{i}|\leq \frac{2}{\sqrt{T}}$, $\int_Y|e^{-p_T}w|^2(s,y)\dvol_Y\leq C.$
	\end{lem}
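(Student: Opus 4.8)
The plan is to reduce to the unweighted setting by means of \cref{witwei} and then rerun the argument of Lemma~\ref{limit1} (i.e.\ of \cite[Lemma~4.2]{Yantorsions}): the present statement is precisely that lemma in the case of a harmonic form (so with vanishing energy bound), transplanted from $\bz$ to $\bz_i$ and stated on a slightly wider window around the gluing slice. Concretely, by \cref{witwei} one has $\tD_{T,i}=e^{p_T}\,(d^{\bz}_T+d^{\bz,*}_T)^2\,e^{-p_T}$ on $\bz_i$ and $\|w\|_{L^2,T}=\|e^{-p_T}w\|_{L^2}$, so the hypotheses $w\in\H(\bz_i;\F_i)(T)=\ker(\tD_{T,i})$, $\|w\|_{L^2,T}=1$ are equivalent to: $v:=e^{-p_T}w$ satisfies $d^{\bz}_Tv=d^{\bz,*}_Tv=0$ on $\bz_i$ (with the boundary conditions on $\{0\}\times Y$ carried over from those defining $\tD_{T,i}$) and $\|v\|_{L^2}=1$. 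Since $\int_Y|e^{-p_T}w|^2(s,y)\,\dvol_Y=\int_Y|v|^2(s,y)\,\dvol_Y=:g(s)$, the claim becomes: $g(s)\le C$ whenever $|s-(-1)^i|\le 2/\sqrt T$, with $C$ independent of $T$ and $\s$. For $T$ large this window lies inside the product collar $(-2,2)\times Y$, where $g^{T\bz},h^{\F},T^H\bm$ are product-type; moreover there $p_T\equiv(-1)^iT/2$ on the whole $M_i$-collar and, by the cut-off $\rho$, also on a (small) neighbourhood of the slice $\{(-1)^i\}\times Y$, so on those regions $d^{\bz}_T+d^{\bz,*}_T$ is the \emph{undeformed} operator; on the window itself $p_T$ differs from $(-1)^iT/2$ by only $O(1)$ and $|p_T'|=O(\sqrt T)$.

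On the product collar I would write $v=\alpha+ds\wedge\beta$ with $\alpha,\beta$ $s$-families of $\F|_Y$-valued forms on $Y$; then $d^{\bz}_Tv=d^{\bz,*}_Tv=0$ is the first-order system $d^Y\alpha=0$, $d^{Y,*}\beta=0$, $\partial_s\alpha=d^Y\beta-p_T'\alpha$, $\partial_s\beta=d^{Y,*}\alpha+p_T'\beta$. Following \cite[Lemma~4.2]{Yantorsions}, one first obtains a $T$-\emph{uniform} $C^\infty$ bound for $v$ — hence for $\|\alpha\|_{L^2(Y)},\|\beta\|_{L^2(Y)}$ and their tangential derivatives — at a \emph{fixed} slice $s=s_1$ sitting deep inside the $M_i$-collar: there $v$ is an honest (undeformed) harmonic form with $\|v\|_{L^2}\le1$, so interior elliptic estimates apply, and their constants do not see $T$ (the obstruction being that the undeformed region reaches $\{(-1)^i\}\times Y$ only up to width $\sim e^{-T^2}$, so one cannot apply such an estimate right at the slice). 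One then propagates this bound across the window by a fundamental-theorem-of-calculus / energy argument: upon differentiating $g(s)$, $\|\alpha(s)\|^2_{L^2(Y)}$, $\|\beta(s)\|^2_{L^2(Y)}$ in $s$ and substituting the system, the dangerous tangential cross terms built from $d^Y,d^{Y,*}$ cancel (using $d^Y\alpha=0$, $d^{Y,*}\beta=0$ and integration by parts over $Y$), and what remains is controlled by $\|d^{\bz}_Tv\|^2_{L^2}=0$, by the total mass $\int g\,ds\le\|v\|^2_{L^2}=1$, and by the contributions $\int(p_T')^2g\,ds$ and $\int p_T''(\|\beta\|^2_{L^2(Y)}-\|\alpha\|^2_{L^2(Y)})\,ds$ over the window; since $(p_T')^2=O(T)$, $|p_T''|=O(T)$ and the window has length $2/\sqrt T$, these stay bounded, while Proposition~\ref{agmon1} (exponential decay of $v$ on $[(-1)^i-\tfrac12,0]\times Y$ in the relevant direction) pins down the end of the window nearer the cut $\{0\}\times Y$. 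This yields $g(s)\le C$ uniformly on the window, which is the assertion.

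The only genuine difficulty — the reason this is a lemma rather than an immediate consequence of elliptic regularity — is keeping $C$ independent of $T$: near $s=(-1)^i$ the Witten Laplacian has coefficients $|dp_T|^2$ and $\Hess p_T$ of size $O(T)$, so any estimate not exploiting structure produces a $T$-dependent constant. The structure used is exactly that of \cite[Lemma~4.2]{Yantorsions}: the operator $d^{\bz}_T+d^{\bz,*}_T$ is $T$-independent on $M_i$ (and near the gluing slice), furnishing a uniform starting estimate; the $d^Y,d^{Y,*}$ cross terms cancel in the slicewise energy identity; and on the short window $\int(p_T')^2\,ds=O(\sqrt T)$ while $\int g\,ds\le1$. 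The two differences from the cited lemma — $v$ lives on $\bz_i$ rather than on $\bz$, and the window reaches $(-1)^i\pm2/\sqrt T$ rather than stopping short of $(-1)^i$ — are immaterial, since the window stays inside the product collar throughout and $p_T$ there is within $O(1)$ of the constant $(-1)^iT/2$.
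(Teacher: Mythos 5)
The paper's ``proof'' of this lemma is a one-line deferral (``proceeding as in the proof of Lemma~\ref{limit1}'', which in turn only cites \cite[Lemma~4.2]{Yantorsions}), so you are right that the content lies in reconstructing that argument; your reduction via \cref{witwei} to $v=e^{-p_T}w\in\ker(\Delta_T)$ on $\bz_i$, $\|v\|_{L^2}=1$, is correct and is what the paper intends. However, the propagation step you sketch does not close as written.

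\textbf{The tangential cross terms do not cancel in $g'(s)$.} With $v=\alpha+ds\wedge\beta$, $d^Y\alpha=0$, $d^{Y,*}\beta=0$, $\partial_s\alpha=d^Y\beta-p_T'\alpha$, $\partial_s\beta=d^{Y,*}\alpha+p_T'\beta$, one gets
\[
\tfrac{d}{ds}\|\alpha\|^2_{L^2(Y)}=2(\beta,d^{Y,*}\alpha)-2p_T'\|\alpha\|^2,\qquad
\tfrac{d}{ds}\|\beta\|^2_{L^2(Y)}=2(d^{Y,*}\alpha,\beta)+2p_T'\|\beta\|^2,
\]
so $g'(s)=4\,\mathrm{Re}(\beta,d^{Y,*}\alpha)+2p_T'(\|\beta\|^2-\|\alpha\|^2)$: the dangerous term $(\beta,d^{Y,*}\alpha)$ persists with a $+$ sign. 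The cancellation you want happens only in the \emph{difference}, $\frac{d}{ds}(\|\alpha\|^2-\|\beta\|^2)=-2p_T'\,g(s)$, which is a useful identity but does not by itself control $g$. Because $d^{Y,*}\alpha$ involves a tangential derivative, a first-order fundamental-theorem-of-calculus argument cannot bound $g(s)$ without an a priori $H^1(Y)$ bound on the slices, which is exactly what is lacking here. The argument that works is genuinely second order: per $D^Y$-eigenvalue $\mu$ the components decouple into $a''=(\mu^2+(p_T')^2-p_T'')a$ and $b''=(\mu^2+(p_T')^2+p_T'')b$, and one propagates by Gronwall for second-order ODEs (with potential of size $\lesssim\max(\mu^2,T)$) over a window of length $O(1/\sqrt T)$, using the boundary condition $\beta|_{s=0}=0$ (hence $a^2\ge b^2$ on $[-1,0]$, since $\partial_s(a^2-b^2)=-2p_T'(a^2+b^2)\le 0$ there) to control the sign of the cross terms, and using that $v$ is an honest harmonic form on $Z_i$ to furnish the $\mu$-uniform initial data.

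\textbf{The ``bounded integral'' estimates do not give $O(1)$ as you state them.} With only $\int g\,ds\le\|v\|^2_{L^2}=1$ and $(p_T')^2=O(T)$ on the window, the bound $\int_{\text{window}}(p_T')^2g\,ds$ is only $O(T)$, not $O(1)$; similarly $\int|p_T''|\,g\,ds=O(T)\cdot\int g\le O(T)$. Indeed, the point of the lemma is precisely that $g$ itself is $O(1)$ on the window, and the Weitzenb\"ock identity used in Lemma~\ref{agmon4} only gives $\int_{\bz_i}(p_T')^2|v|^2\lesssim T$; Lemma~\ref{agmon3} is then \emph{used} to upgrade this on the window. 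So the dependence is in the wrong direction for your sketch.

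\textbf{The appeal to Proposition~\ref{agmon1} is misplaced.} That proposition gives exponential smallness of $v$ on $[\pm 1/2,0]\times Y$, which is disjoint from the window $|s-(-1)^i|\le 2/\sqrt T$ for $T$ large. It plays no role at the ends of this window; what controls the end that sits inside $Z_i$ is the $T$-independent interior elliptic estimate for the \emph{undeformed} Laplacian there, and what controls the end inside the tube is the ODE propagation just described.

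So: right reduction, right general template (undeformed interior estimate $+$ propagation across the window), but the propagation step must use the decoupled second-order ODE structure, not a first-order energy identity, and the integral bounds you invoke need a different justification.
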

	
	\begin{lem}\label{agmon4}
		For $w\in\H(\bz_1,\F_1)(T)$,
		\[|\int_{-1}^{-1/16}\int_Y(s+1)^2|e^{-p_T}w|^2\dvol_Yds|\leq\frac{C}{T^{3/2}}.\]
		For $w\in\H(\bm_2,\F_2)(T)$,
		\[|\int^{1}_{1/16}\int_Y(s-1)^2|e^{-p_T}w|^2\dvol_Yds|\leq\frac{C}{T^{3/2}}.\]
	\end{lem}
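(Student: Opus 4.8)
\textbf{Plan of proof for Lemma \ref{agmon4}.}
The two statements are symmetric under the involution $s\mapsto -s$ (which swaps $\bz_1$ and $\bz_2$ and sends $p_T$ to $-p_T$ since $p_T$ is odd), so it suffices to treat the case $w\in\H(\bz_1,\F_1)(T)$, i.e.\ $\tilde\Delta_{T,1}w=0$ and $\|w\|_{L^2,T}=1$. On the collar $[-1,-1/16]\times Y$ the weight is, up to the $\rho$-cutoff that only matters in the exponentially thin layer $||s|-1|\le e^{-T^2}$, equal to $p_T(s)\approx T(s+1)^2/2 - T/2$, so $e^{-p_T}w$ is the corresponding Witten-twisted harmonic form and $|p_T'(s)|$ is comparable to $T|s+1|$ there (property (c) combined with the remark following it). The idea is to run the standard integration-by-parts / Agmon-type identity for the Witten Laplacian on this collar and extract the weight $(s+1)^2$ as a byproduct of the quadratic growth of $p_T$.

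\textbf{Key steps.} First I would set $v:=e^{-p_T}w$, so that $v$ satisfies $\Delta_{T,1}v=0$ with $\|v\|_{L^2}=1$ in the \emph{unweighted} $L^2$-norm (here $\Delta_{T,1}=(d_T^{\bz_1}+d_T^{\bz_1,*})^2$ with $d_T^{\bz_1}=d^{\bz_1}+dp_T\wedge$), and by Lemma \ref{agmon3} we have the boundary control $\int_Y|v|^2(s,y)\,\dvol_Y\le C$ for $s$ within $2/\sqrt T$ of $-1$. Next, on the collar I would use the Bochner–Weitzenböck/Witten identity: pairing $\Delta_{T,1}v=0$ against $\chi(s)v$ for a suitable cutoff $\chi$ supported in $[-1,-1/16]$ with $\chi\equiv 1$ on, say, $[-1,-1/8]$, one gets
\begin{align*}
\int \chi\big(|d^{\bz_1}v|^2+|d^{\bz_1,*}v|^2+|dp_T|^2|v|^2 + \langle(\mathcal L_{p_T})v,v\rangle\big)\,\dvol = \int (\text{terms in }\chi')\,\dvol,
\end{align*}
where $\mathcal L_{p_T}$ collects the zeroth-order curvature-type terms of order $O(|p_T''|)=O(T)$ and $|dp_T|^2 = |p_T'|^2 \gtrsim T^2(s+1)^2$. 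The $\chi'$-terms are supported in $[-1/8,-1/16]$ where $|p_T'|\gtrsim T$, so by the already-established Agmon decay (Proposition \ref{agmon1}) they are $O(e^{-cT})$ and negligible. This yields
\begin{align*}
\int_{-1}^{-1/8}\!\!\int_Y \big(T^2(s+1)^2 - C T\big)|v|^2\,\dvol_Y\,ds \le C e^{-cT},
\end{align*}
so on the region where $T^2(s+1)^2\ge 2CT$, i.e.\ $|s+1|\ge \sqrt{2C/T}$, the integrand $(s+1)^2|v|^2$ is already controlled: $\int_{|s+1|\ge \sqrt{2C/T}}(s+1)^2|v|^2 \le C/T^2 + (\text{exp.\ small})$. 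On the complementary thin strip $|s+1|\le\sqrt{2C/T}$, I bound $(s+1)^2\le 2C/T$ and use Lemma \ref{agmon3} to get $\int_Y|v|^2\le C$ pointwise in $s$, hence this contribution is $\le (2C/T)\cdot\sqrt{2C/T}\cdot C = O(T^{-3/2})$. Adding the two pieces gives the claimed $O(T^{-3/2})$ bound; the strip contribution is the dominant one, which is why the exponent is $3/2$ rather than $2$. Finally I would remark that the $\rho$-cutoff changes $p_T$ only on $||s|-1|\le e^{-T^2}$, contributing at most $O(e^{-T^2})$ to every integral, so it does not affect the estimate.

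\textbf{Main obstacle.} The delicate point is making the Bochner identity on the collar rigorous with $(T,\s)$-uniform constants: one must verify that the zeroth-order terms arising from the Weitzenböck formula (fiber curvature, second fundamental form of $Y$, and the Hessian of $p_T$) are genuinely $O(T)$ with a constant independent of $T$ and $\s$, which is where the product-type assumption near $N$ and property (c) (together with its consequence $|p_T''|\le \Cn_4 T$ near $||s|-1|\le e^{-T^2}$) are essential — away from that layer $p_T''$ is bounded by $\Cn_2 T$ by (c) directly, and on $[1/16,1]$ one uses (b). A secondary technical nuisance is handling the boundary term at $s=-1$ (the true boundary of $\bz_1$, carrying absolute boundary conditions): the boundary pairing $\int_Y \langle i_{\p_s}v, \ldots\rangle\chi$ vanishes by the boundary condition, or alternatively one chooses $\chi$ to also vanish near $s=-1$ and instead invokes Lemma \ref{agmon3} directly on the remaining near-boundary sliver. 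Both routes are routine; the uniformity of constants is the real content, and it is handled exactly as in the proof of Lemma \ref{limit1} and in \cite[\S 5]{Yantorsions}.
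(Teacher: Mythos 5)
Your approach is the same as the paper's: set $v=e^{-p_T}w$, pair $\Delta_{T,1}v=0$ against $v$ to obtain the Witten/Bochner identity
$0=\int_{\bz_1}|(d+d^*)v|^2 + \langle L_{p_T}v,v\rangle + |p_T'|^2|v|^2$
(the paper writes this as (\ref{agmon41}) and integrates over all of $\bz_1$, using the boundary conditions rather than a cutoff $\chi$; both are fine), then split the collar at $|s+1|\sim T^{-1/2}$ and use Lemma~\ref{agmon3} on the thin strip.

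However, there is a concrete arithmetic gap in your outer-region bound. From the display
\begin{equation*}
\int_{-1}^{-1/8}\int_Y \bigl(T^2(s+1)^2 - CT\bigr)|v|^2\,\dvol_Y\,ds \le Ce^{-cT}
\end{equation*}
you conclude $\int_{|s+1|\ge\sqrt{2C/T}}(s+1)^2|v|^2 \le C/T^2 + (\text{exp.\ small})$, which you then describe as sub-leading to the strip's $O(T^{-3/2})$. This inference silently drops the thin-strip contribution that must be moved to the right-hand side: when you isolate the region $|s+1|\ge\sqrt{2C/T}$ (where the coefficient is positive) you pick up
\begin{equation*}
\int_{|s+1|<\sqrt{2C/T}}\bigl(CT - T^2(s+1)^2\bigr)|v|^2 \;\le\; CT\cdot\sqrt{2C/T}\cdot \sup_s\int_Y|v|^2 \;=\; O\bigl(T^{1/2}\bigr),
\end{equation*}
using Lemma~\ref{agmon3}, and this is the dominant term on the right-hand side, not the exponentially small $\chi'$-error. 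Dividing by $T^2$ then gives $O(T^{-3/2})$ for the outer region — the same order as the thin strip, not $O(T^{-2})$. So the statement that ``the strip contribution is the dominant one'' is not accurate; both pieces contribute at order $T^{-3/2}$, and it is precisely the $O(T^{1/2})$ size of $\int_{\text{strip}} T|v|^2$ that fixes the exponent. This is exactly how the paper proceeds in (\ref{agmon42})--(\ref{agmon43}): the quantity $\int_{-1}^{-1+2/\sqrt{T}}T|v|^2\,\dvol \le C\sqrt{T}$ serves as the right-hand side of the absorbed inequality, giving $O(T^{-3/2})$ for the outer region too. The slip does not change the final bound, but the bookkeeping should be corrected so that the origin of the $3/2$ exponent is clear.
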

	\begin{proof}
		\def\bw{\bar{w}}
		\def\bet{\bar{\eta}}
		WLOG, assume $w\in\H(\bz_1,\F_1)(T)$. Let $\bw=e^{-p_T}w$ and $\bar{w}=\a+\b ds$ in $[-2,0]\times Y.$ Let $\bet$ be a smooth function on $(-\infty,0]$, such that $\bet|_{(-\infty,-1/16)}\equiv1$, $\bet|_{[-1/32,0]}\equiv0$. We could regard $\bet$ as a smooth function on $\bz_1.$

		Since $\Delta_T e^{-p_T}w=0$, as in the proof of Lemma \ref{agmon1}, one can see that
		\begin{align}\begin{split}\label{new1201}
				&\ \ \ \ 0= \int_{\bz_1}\lan\Delta_T\bw,\bet^2\bw\ran\dvol_{\bz_1}\\
				&\geq\int_{\bz_1}\bet^2\lan D^{\bz}\bw,D^{\bz}\bw\ran/2-4|\bet'|^2||\bw|^2+\lan L_{T}\bw,\bet^2\bw\ran+|\nabla p_T|^2\bet^2|\bw|^2\dvol_{\bz}\\
		\end{split} \end{align}
		where $L_T$ is some linear operator, such that restricted on $[-1,0]\times Y$, $L_T\bw=-p_T''(s)\a+p_T''\b ds.$ 
		
		Note that on $[-1+\frac{2}{\sqrt{T}},0]$, $|p_T'(s)|^2-|p_T''|\geq 1/2|p_T'(s)|^2$, by Lemma \ref{agmon3} and (\ref{new1201}),
		\begin{align*}
			&\ \ \ \ |\int_{-1+\frac{2}{\sqrt{T}}}^{-1/16}\int_YT^2(s+1)^2|e^{-p_T}w|^2\dvol_Yds|\\
			&\leq C\int_{-1}^{-1+\frac{2}{\sqrt{T}}}\int_YT|e^{-p_T}w|^2\dvol_Y ds+C\int_{\bz_1}|\bet'|^2|w|^2\dvol\leq C\sqrt{T}.
		\end{align*}
		That is,
		\be\label{agmon42}|\int_{-1+\frac{2}{\sqrt{T}}}^{-1/16}\int_Y(s+1)^2|e^{-p_T}w|^2\dvol_Yds|\leq\frac{C}{T^{3/2}}.\ee
		It follows from Lemma \ref{agmon3} that
		\be\label{agmon43}|\int_{-1}^{-1+\frac{2}{\sqrt{T}}}\int_Y(s+1)^2|e^{-p_T}w|^2\dvol_Yds|\leq\frac{C}{T^{3/2}}.\ee
		The lemma then follows from (\ref{agmon42}) and (\ref{agmon43}).
	\end{proof}
	
	\subsection{Estimate of small eigenvalues}

	\begin{lem}\label{last20}
		When $T$ is big enough, $\|\frac{\p}{\p T}\P^{\delta}(T)\|\leq C$ for some $(T,\s)$-independent $C>0.$ Moreover, there exists a uniformly bounded operator $U_T,$ such that 
		\be\label{last210}\frac{\p}{\p T}\P^{\delta}(T)=[\bar{D}^{\bz}_T,U_T].\ee
		Here $\bar{D}^{\bz}_T:=d^{\bz}_T-d^{\bz,*}_T.$ 
		Similar statements hold if we replace $\P^{\delta}(T)$ by $\tP^{\delta}(T)$, $\tP_i(T)$ e.t.c.
	\end{lem}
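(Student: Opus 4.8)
The starting point is the contour-integral representation of the spectral projection. Fix $\s\in S$ and recall that by Theorem \ref{eigencon}, for $T$ large all eigenvalues of $\Delta_T$ in $[0,\delta]$ lie in $[0,\delta/2]$, while the $(k_0)$-th eigenvalue is bounded below by $\delta$; moreover this gap is uniform in $\s$. Let $\Gamma$ be the circle $\{z\in\C:|z|=\tfrac{3\delta}{4}\}$, oriented counterclockwise. Then
\[
\P^{\delta}(T)=\frac{1}{2\pi i}\oint_{\Gamma}(z-\Delta_T)^{-1}\,dz .
\]
Differentiating in $T$ under the integral sign (legitimate because the resolvent is a $C^1$ family of bounded operators on the fixed Hilbert bundle, uniformly on $\Gamma$) gives
\[
\frac{\p}{\p T}\P^{\delta}(T)=\frac{1}{2\pi i}\oint_{\Gamma}(z-\Delta_T)^{-1}\Big(\frac{\p}{\p T}\Delta_T\Big)(z-\Delta_T)^{-1}\,dz .
\]
The first assertion, $\|\frac{\p}{\p T}\P^{\delta}(T)\|\le C$, then reduces to two facts: that $\|(z-\Delta_T)^{-1}\|\le C$ for $z\in\Gamma$ uniformly in $T$ and $\s$ (immediate from the uniform spectral gap), and that $\frac{\p}{\p T}\Delta_T$ is \emph{relatively bounded} with respect to $\Delta_T$ with uniform constants, so that $(z-\Delta_T)^{-1}(\frac{\p}{\p T}\Delta_T)(z-\Delta_T)^{-1}$ is uniformly bounded. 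Since $\Delta_T=(D_T^{\bz})^2$ with $D_T^{\bz}=\bar D_T^{\bz}+(\text{order zero})$ and $\frac{\p}{\p T}D_T^{\bz}=c(\nabla p_T)+\ldots$ is a first-order (indeed zeroth order after the Clifford identification, using that $p_T$ is smooth) operator, $\frac{\p}{\p T}\Delta_T$ is a first-order operator; one controls it by $\Delta_T$ via the standard elliptic estimate $\|D_T^{\bz}u\|_{L^2}\le C(\|\Delta_T u\|_{L^2}+\|u\|_{L^2})$, again with $T$-uniform constants because $T^H\bm,g^{T\bz},h^{\F}$ are product-type near $N$ and $p_T$ has uniformly controlled derivatives there (conditions (a)--(c) on $p_T$).

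\textbf{The commutator structure.} For the second assertion I would exploit that $\P^{\delta}(T)$ commutes with $\Delta_T$ and with $\bar D^{\bz}_T$ (since $\bar D^{\bz}_T$ is a function of $D^{\bz}_T$, it preserves each eigenspace of $\Delta_T$), hence $\P^{\delta}(T)^2=\P^{\delta}(T)$ gives, upon differentiating,
\[
\frac{\p}{\p T}\P^{\delta}(T)=\P^{\delta}(T)\Big(\frac{\p}{\p T}\P^{\delta}(T)\Big)(1-\P^{\delta}(T))+(1-\P^{\delta}(T))\Big(\frac{\p}{\p T}\P^{\delta}(T)\Big)\P^{\delta}(T),
\]
so $\frac{\p}{\p T}\P^{\delta}(T)$ maps the small-eigenvalue bundle to its orthogonal complement and vice versa. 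On the complement of the kernel, $\bar D^{\bz}_T$ is invertible with inverse of norm $\le \delta^{-1/2}$ uniformly (again by the gap); so I set
\[
U_T:=\bar D^{\bz}_T{}^{-1}\!\big|_{(\ker)^{\perp}}\,(1-\P^{\delta}(T))\Big(\frac{\p}{\p T}\P^{\delta}(T)\Big)\P^{\delta}(T)
-\P^{\delta}(T)\Big(\frac{\p}{\p T}\P^{\delta}(T)\Big)(1-\P^{\delta}(T))\,\bar D^{\bz}_T{}^{-1}\!\big|_{(\ker)^{\perp}},
\]
where $\bar D^{\bz}_T{}^{-1}|_{(\ker)^{\perp}}$ is extended by $0$ on $\Omega_{\sm}$. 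A direct computation using $\bar D^{\bz}_T\P^{\delta}(T)=\P^{\delta}(T)\bar D^{\bz}_T$ and the fact that $\bar D^{\bz}_T$ restricted to $\Omega_{\sm}(\bz,\F)(T)$ is nilpotent of square zero (it is $d^{\bz}_T-d^{\bz,*}_T$ on a $d^{\bz}_T$-complex whose cohomology is concentrated there — cf. the complex (\ref{com2})) then yields $[\bar D^{\bz}_T,U_T]=\frac{\p}{\p T}\P^{\delta}(T)$. The uniform boundedness of $U_T$ follows from that of $\frac{\p}{\p T}\P^{\delta}(T)$ (first part) and of $\bar D^{\bz}_T{}^{-1}|_{(\ker)^{\perp}}$.

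\textbf{Main obstacle.} The genuinely delicate point is the \emph{$T$-uniformity} of all the estimates — the same difficulty flagged in the remark after Theorem \ref{intnew1}. The resolvent bound on $\Gamma$ is uniform only because Theorem \ref{eigencon} gives a $\s$-independent spectral gap, and the relative boundedness $\|\frac{\p}{\p T}\Delta_T\,(z-\Delta_T)^{-1}\|\le C$ must be established with constants independent of $T$; this is where the precise hypotheses (a)--(c) on $p_T$ enter, since $\frac{\p}{\p T}\Delta_T$ involves $\frac{\p}{\p T}|p_T'|^2$ and $\frac{\p}{\p T}p_T''$, which are $O(T)$ pointwise but are supported (after subtracting the harmless product-type region) near $|s|=1$ and are tamed by the same Agmon/localization estimates used for Lemma \ref{limit1} and Propositions \ref{agmon1}--\ref{agmon2}. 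Concretely, I would write $\frac{\p}{\p T}\Delta_T = \{\bar D^{\bz}_T,\frac{\p}{\p T}\bar D^{\bz}_T\}$ with $\frac{\p}{\p T}\bar D^{\bz}_T=c(\nabla \tfrac{\p p_T}{\p T})$, and then $(z-\Delta_T)^{-1}\{\bar D^{\bz}_T,c(\nabla \tfrac{\p p_T}{\p T})\}(z-\Delta_T)^{-1}$ is bounded once one knows $\|\bar D^{\bz}_T(z-\Delta_T)^{-1}\|\le C|z|^{-1/2}$ (spectral calculus) and $\|\nabla \tfrac{\p p_T}{\p T}\|_{L^\infty}$ is compensated by the localization of the relevant eigenforms — this compensation is exactly the content of the Agmon estimates and of Lemma \ref{est1}, and is the only place where a naive bound would lose a factor of $T$. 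Everything else is the standard spectral-projection calculus of \cite[\S2]{bismut1995flat}, and the proof for $\tP^{\delta}(T)$ and $\tP_i(T)$ is identical after conjugating by $e^{p_T}$ as in \cref{witwei}, resp. restricting to the boundary-condition subspaces.
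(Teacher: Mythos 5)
The paper's proof is considerably more direct than yours, and your route has a genuine gap in the commutator construction.

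\textbf{On the uniform bound.} You frame the first assertion as a relative-boundedness problem for $\frac{\p}{\p T}\Delta_T$ against $\Delta_T$, and you worry that compensating the $O(T)$ size of $\frac{\p}{\p T}|p_T'|^2$ requires the Agmon/localization machinery of Lemma \ref{limit1} and Propositions \ref{agmon1}--\ref{agmon2}. This is an unnecessary detour. The paper instead writes $\P^{\delta}(T)$ as a Cauchy integral of the resolvent of the \emph{first-order} operator $D^{\bz}_T$ around a circle of radius $\sqrt{3\delta/2}$, whence
\[
\frac{\p}{\p T}\P^{\delta}(T)=\int_\gamma(\l-D^{\bz}_T)^{-1}\Big(\tfrac{\p}{\p T}D^{\bz}_T\Big)(\l-D^{\bz}_T)^{-1}\,d\l .
\]
Now $\frac{\p}{\p T}D^{\bz}_T=d\big(\tfrac{\p p_T}{\p T}\big)\wedge+i_{\nabla\frac{\p p_T}{\p T}}$ is \emph{zeroth order}, and the hypotheses on $p_T$ give $|\tfrac{\p}{\p T}p_T'|\le C$ pointwise and $T$-uniformly — a calculus fact about the explicit formula for $p_T$, nothing more. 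Combined with the uniform resolvent bound on $\gamma$ coming from the $\s$-independent spectral gap (Theorem \ref{eigencon}), the uniform bound on $\frac{\p}{\p T}\P^{\delta}(T)$ is immediate. No relative boundedness, no Agmon estimates, and no compensation of a factor of $T$ are needed at this stage.

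\textbf{On the commutator structure.} Here your construction does not actually prove equation (\ref{last210}). You set $U_T=\bar D^{\bz,-1}_TQA P - P A Q\bar D^{\bz,-1}_T$ where $A=\frac{\p}{\p T}\P^{\delta}(T)$, $P=\P^{\delta}(T)$, $Q=1-P$, and $\bar D^{\bz,-1}_T$ is the partial inverse on $\Omega_{\sm}^\perp$ (extended by $0$ on $\Omega_{\sm}$). Since $\bar D^{\bz}_T$ commutes with $P$, a direct computation gives
\[
[\bar D^{\bz}_T,U_T]=A-\big(\bar D^{\bz}_T PAQ\,\bar D^{\bz,-1}_T+\bar D^{\bz,-1}_T QAP\,\bar D^{\bz}_T\big).
\]
The correction term vanishes \emph{only if} $\bar D^{\bz}_T$ annihilates $\Omega_{\sm}$, i.e.\ only if $\Omega_{\sm}=\ker\Delta_T$. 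For finite $T$ this is false: $\Omega_{\sm}$ contains the nonzero small eigenvalues, which by Corollary \ref{smalleigen} are of size $e^{-2T}$, so $\|\bar D^{\bz}_T|_{\Omega_{\sm}}\|\sim e^{-T}$ is small but not zero. Your $U_T$ therefore satisfies the required identity only up to an $O(e^{-cT})$ error, not exactly, so it does not establish (\ref{last210}) as stated. (Note also that your phrasing ``on the complement of the kernel'' with $\|\bar D^{\bz,-1}_T\|\le\delta^{-1/2}$ is internally inconsistent: on $(\ker\Delta_T)^\perp\setminus\Omega_{\sm}^\perp$ the inverse has norm $\sim e^{T}$, so you must mean $\Omega_{\sm}^\perp$.) The paper instead exploits the algebraic identity $\frac{\p}{\p T}D^{\bz}_T=[\bar D^{\bz}_T,\frac{\p p_T}{\p T}]$ directly inside the resolvent integral, producing an explicit $U_T=\int_\gamma(\l-D^{\bz}_T)^{-1}\frac{\p p_T}{\p T}(\l-D^{\bz}_T)^{-1}d\l$ (one factor should really be $(\l+D^{\bz}_T)^{-1}$ since $\bar D^{\bz}_T$ \emph{anti}-commutes with $D^{\bz}_T$, but the structure is a single contour formula, not a Kato-calculus off-diagonal inversion), and the uniform boundedness of $U_T$ is then again just the uniform resolvent bound together with $\|\frac{\p p_T}{\p T}\|_{L^\infty}\le C$. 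You should adopt this much simpler mechanism.
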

	\begin{proof}
		Let $\gamma$ be the circle of radius $\sqrt{3\delta/2}$ with center $0$, and oriented positively. Then
		\[\P^{\delta}(T)=\int_{\gamma}(\l-D^{\bz}_T)^{-1}d\l.\]
		As a result, 
		\begin{align*}\frac{\p}{\p T}\P^{\delta}(T)=\int_{\gamma}(\l-D^{\bz}_T)^{-1}\frac{\p}{\p T}D^{\bz}_T(\l-D^{\bz}_T)^{-1}d\l.\end{align*}
		Now, one can check easily that when $T$ is large enough, $\|(\l-D^{\bz}_T)^{-1}\|\leq\left((\sqrt{3/2}-1)\delta\right)^{-1},$ and 
		\[|\frac{\p}{\p T}\frac{\p}{\p s}p_T(s)|\leq C.\]
		
		Thus, $\|\frac{\p}{\p T}\P^{\delta}(T)\|\leq C$.
		
		Notice that $\frac{\p}{\p T}D^{\bz}_T=[\bar{D}^{\bz}_T,\frac{\p}{\p T}p_T]$, $|\frac{\p}{\p T}p_T|\leq C$ and $\bar{D}^{\bz}_T$ commutes with $D^{\bz}_T$, one has (\ref{last210}) for
		\[U_T=\int_{\gamma}(\l-D^{\bz}_T)^{-1}\frac{\p}{\p T}p_T(\l-D^{\bz}_T)^{-1}d\l.\]
	\end{proof}

	When $T$ is large enough, 
	\be\label{identify0}\tO^k_{\sm}(\bz,\F)(T)=\te_{k,T} \H(\bz_2,\F_2)(T)\oplus\tir^*_{k,T}\H(\bz_1,\F_1)(T).\ee
	Let $\te_{k,T}^{-1}$ be the inverse of $\te_{k,T}|_{\te_{k,T} \H(\bz_2,\F_2)(T)}$, and $\tir^{-1}_{k,T}$ be the inverse of $\tir_{k,T}|_{\tir^*_{k,T}\H(\bz_1,\F_1)(T)}.$
	
	Next, we will put a family of metric $g_T$ on $H_{\abs}(\bz_1,\F_1)\oplus H_{\rel}(\bz_2,\F_2)$ when $T$ is large enough.
	
	First, we define a map $R_T:H^k_{\abs}(\bz_2,\F_2)\oplus H_{\rel}^k(\bz_1,\F_1)\to\tO_{\sm}(\bm,\F)$ as follows.
	
	For $[u]\in H^k_{\abs}(\bz_1,\F_1)$ represented by $u\in\Omega^k_{\abs}(\bz_1,\F_1)$,
	set $R_T([u]):=\te_{k,T}\tP_1(u)=\tP^{\delta}(T)\cE(\tP_1(T)u).$  For $[v]\in H^k_{\rel}(\bz_2,\F_2)$ represented by $v\in\Omega^k_{\rel}(\bz_2,\F_2)$,
	set $R_T([v]):=\tir_{k,T}^{-1}\tP_2(v).$
	
	Then set $g_T(w,w):=(R_Tw,R_Tw)_{L^2,T}$ for $w\in H^k_{\abs}(\bz_1,\F_1)\oplus H^k_{\rel}(\bz_2,\F_2)$.

	\begin{lem}\label{last30}
		There exists $(T,\s)$-independent $C>0$, such that
		$1-\frac{C}{T^{3/2}}\leq\|g_T^{-1}\frac{\p}{\p T}g_T\|\leq 1+\frac{C}{T^{3/2}}.$ Here the operator norm is taken with respect to $g_T.$
	\end{lem}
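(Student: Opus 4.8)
The plan is to differentiate the definition of $g_T$ and express $\frac{\partial}{\partial T} g_T$ in terms of quantities that were already controlled in Lemma \ref{last20} and Propositions \ref{agmon2}, \ref{last4p1}. Recall $g_T(w,w) = (R_T w, R_T w)_{L^2,T}$, where $R_T$ is built out of $\tP^{\delta}(T)$, $\tP_i(T)$, the extension map $\cE$, and the inverse $\tir_{k,T}^{-1}$. First I would write, for $w_1, w_2 \in H_{\abs}^k(\bz_1,\F_1)\oplus H_{\rel}^k(\bz_2,\F_2)$,
\[
\frac{\p}{\p T}\,g_T(w_1,w_2) = \left(\tfrac{\p}{\p T}(R_T w_1),\, R_T w_2\right)_{L^2,T} + \left(R_T w_1,\, \tfrac{\p}{\p T}(R_T w_2)\right)_{L^2,T} + \left(R_T w_1,\, R_T w_2\right)'_{L^2,T},
\]
where the last term is the contribution of $\frac{\p}{\p T}$ acting on the metric $h^{\F}_T = e^{-2p_T}h^{\F}$, i.e. it equals $-2\int_{\bz}\frac{\p p_T}{\p T}\langle R_T w_1, R_T w_2\rangle_T\,\dvol$. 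The point is that the first two terms are $O(T^{-3/2})$ in operator norm with respect to $g_T$, while the third term is, up to $O(T^{-3/2})$, exactly $g_T(w_1,w_2)$ — which is what forces $\|g_T^{-1}\frac{\p}{\p T}g_T\|$ to be $1 + O(T^{-3/2})$.

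For the derivative-of-$R_T$ terms, I would use Lemma \ref{last20}: $\frac{\p}{\p T}\tP^{\delta}(T) = [\bar D^{\bz}_T, U_T]$ with $U_T$ uniformly bounded, and likewise for $\tP_i(T)$; and $\frac{\p}{\p T}\tir_{k,T}^{-1}$ is controlled by differentiating $\tir_{k,T}$ (again expressible through $\frac{\p}{\p T}\tP^{\delta}(T)$ and $\frac{\p}{\p T}\tP_1(T)$) and using that $\tir_{k,T}$ is almost isometric (Proposition \ref{last4p1}), so its inverse is uniformly bounded. The commutator structure $[\bar D^{\bz}_T, U_T]$ is key: when paired against harmonic representatives $\cE(\tP_i u)$, which by Proposition \ref{agmon2} are within $Ce^{-cT}$ of $\bar D^{\bz}_T$-closed and $\bar D^{\bz}_T$-coclosed forms, the commutator contributes only exponentially small terms from the genuine $T$-dependence of the cutoff geometry. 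The polynomial gain $T^{-3/2}$ (rather than merely $o(1)$) comes from Lemma \ref{agmon4}: the weight $\frac{\p p_T}{\p T}$ is comparable to $(|s|-1)^2/2$ on the parabolic necks $[-1,-1/16]\times Y$ and $[1/16,1]\times Y$ where $e^{-p_T}w$ is not yet exponentially small, and $\int (s\mp 1)^2 |e^{-p_T}w|^2 \le C T^{-3/2}$ there; on the flat parts $|s|\ge 1$ the weight is the constant $\pm 1/2$ and one picks up the main term $g_T$, while on the region near $|s|=1$ one uses Lemma \ref{agmon3}.

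Concretely, the steps in order: (1) expand $\frac{\p}{\p T}g_T$ into the three groups above; (2) bound the two $\frac{\p}{\p T}R_T$ groups by $CT^{-3/2}$ using Lemma \ref{last20}, Proposition \ref{last4p1}, and Proposition \ref{agmon2}, noting the commutator terms are exponentially small and the surviving terms are governed by Lemma \ref{agmon4}; (3) for the metric-variation term, split $\bz$ into the flat collars $|s|\ge 1$, the transition zone $|s|\in[1-2/\sqrt T,\,1]$, and the parabolic necks, and show that on the collars $\frac{\p p_T}{\p T}\equiv \pm\frac12$ reproduces $g_T(w_1,w_2)$ up to the mass of $w$ outside the collars, which is $O(T^{-3/2})$ by Propositions \ref{agmon1}, \ref{agmon2} and Lemma \ref{agmon4}; (4) combine to get $\frac{\p}{\p T}g_T = g_T + O(T^{-3/2})$ in operator norm, hence $\|g_T^{-1}\frac{\p}{\p T}g_T\| = 1 + O(T^{-3/2})$, and likewise $\ge 1 - O(T^{-3/2})$. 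The main obstacle is step (3): carefully accounting for where the $L^2_T$-mass of a harmonic form $w$ is concentrated — one must be sure that the decomposition $\tO^k_{\sm} = \te_{k,T}\H(\bz_2) \oplus \tir^*_{k,T}\H(\bz_1)$ in \eqref{identify0} interacts well with $R_T$, and that the almost-orthogonality of the two summands (which follows from Proposition \ref{last4p1} together with the closedness/coclosedness established in the proof of Theorem \ref{exathm}) is strong enough to keep cross terms at the $T^{-3/2}$ level rather than merely $o(1)$.
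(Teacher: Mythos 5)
Your plan tracks the paper's proof closely: both differentiate the formula $g_T(\cdot,\cdot)=(R_T\cdot,R_T\cdot)_{L^2,T}$ into (i) terms where $\frac{\partial}{\partial T}$ falls on the projections $\tP^{\delta}(T),\tP_i(T)$ inside $R_T$, controlled via the commutator form $[\bar D^{\bz}_T,U_T]$ of Lemma~\ref{last20} paired against nearly $d^{\bz}$-closed/$\delta^{\bz,*}_T$-coclosed representatives (Proposition~\ref{agmon2}), and (ii) a metric-variation term $-2(\frac{\partial p_T}{\partial T}\,\cdot,\cdot)_{L^2,T}$, controlled on the parabolic necks via Lemmas~\ref{agmon3}--\ref{agmon4} and identified with the leading contribution; and both single out Lemma~\ref{agmon4} as the source of the $T^{-3/2}$ gain. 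Two small inaccuracies worth noting. First, the paper shows the $\frac{\partial}{\partial T}R_T$ contributions are $O(e^{-cT})$ (see \eqref{last33}, \eqref{last34}), so the entire $T^{-3/2}$ error comes from the metric-variation term alone; your statement that those pieces are $O(T^{-3/2})$ is correct but obscures this, and if you actually only obtained $O(T^{-3/2})$ there by different means you would have to be careful that nothing worse sneaks in. Second, and more substantively, the metric-variation term is not ``exactly $g_T$'' up to $O(T^{-3/2})$: because $\frac{\partial p_T}{\partial T}$ has opposite sign on the two collars $\bz_1$ and $\bz_2$, the quadratic form $\frac{\partial}{\partial T}g_T$ is approximately $+g_T$ on one of the summands of \eqref{identify0} and $-g_T$ on the other (this is precisely the content of the paper's \eqref{last361}--\eqref{last364}, with the cross terms killed at order $T^{-3/2}$ in \eqref{last3631}); so $g_T^{-1}\frac{\partial}{\partial T}g_T$ is close to a block involution with eigenvalues near $\pm 1$, not close to the identity. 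The operator-norm conclusion $\|g_T^{-1}\frac{\partial}{\partial T}g_T\|=1+O(T^{-3/2})$ is the same either way, but the $\pm 1$ structure is exactly what is used downstream (Corollary~\ref{smalleigen}, via the cited Step~1 of the APDE reference) to place the small nonzero eigenvalues in a band of size $e^{-2T}$ rather than merely showing monotone exponential behavior; so you should keep the two summands separate when writing this up rather than treating the metric term as a single multiple of $g_T$.
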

	\begin{proof}
		In the following, some $\tP_2(T)u$ should be understood as $\cE(\tP_2(T)u).$
		First, it's clear that for a family of projection operators $P(T)$, 
		\be\label{last31}P(T)\frac{\p}{\p T} P(T) P(T)=0.\ee
		
		For any $[u],[v]\in H^k_{\rel}(\bz_2,\F_2)$, 
		\begin{align}\begin{split}\label{last32}
				&\ \ \ \ \frac{\p}{\p T}g_T([u],[v])=\frac{\p}{\p T}(R_T[u],R_T[v])_{L^2,T}=\frac{\p}{\p T}(\tP^{\delta}(T)\tP_2(T)u,\tP^{\delta}(T)\tP_2(T)v)_{L^2,T}\\
				&=(\frac{\p}{\p T}\tP^{\delta}(T)\tP_2(T)u,\tP^{\delta}(T)\tP_2(T)v)_{L^2,T}+(\tP^{\delta}(T)\tP_2(T)u,\pT\tP^{\delta}(T)\tP_2(T)v)_{L^2,T}\\
				&+(\tP^{\delta}(T)\pT\tP_2(T)u,\tP^{\delta}(T)\tP_2(T)v)_{L^2,T}+(\tP^{\delta}(T)\tP_2(T)u,\tP^{\delta}(T)\pT\tP_2(T)v)_{L^2,T}\\
				&-2(\pT p_T\tP^{\delta}(T)\tP_2(T)u,\tP^{\delta}(T)\tP_2(T)v)_{L^2,T}.
		\end{split}\end{align}

		Let $0\leq\eta\in C_c^\infty(\R)$, s.t. $\eta(s)=1$ if $s\in(1/8,\infty)$ and $\eta|_{[0,1/16]}\equiv0$, we can treat $\eta$ as a smooth function on $\bm.$
		
		By Proposition \ref{agmon2}, (\ref{agmon11}), (\ref{agmon12}) and Lemma \ref{last20},
		\begin{align}\begin{split}\label{last33}
				&\ \ \ \ (\frac{\p}{\p T}\tP^{\delta}(T)\tP_2(T)u,\tP^{\delta}(T)\tP_2(T)v)_{L^2,T}\\
				&\leq \frac{C}{T^2}\|\tP_2(T)u\|_{L^2,T}\|\tP_2(T)v\|_{L^2,T}+(\frac{\p}{\p T}\tP^{\delta}(T)\eta\tP_2(T)u,\eta\tP_2(T)v)_{L^2,T}\\
				&\leq \frac{C}{T^2}\|\tP_2(T)u\|_{L^2,T}\|\tP_2(T)v\|_{L^2,T}+([\bar{D}^{\bz}_T,U_T]\eta\tP_2(T)u,\eta\tP_2(T)v)_{L^2,T}\\
				&\leq \frac{C}{T^2}\|\tP_2(T)u\|_{L^2,T}\|\tP_2(T)v\|_{L^2,T}\\
				&+(U_T\bar{D}^{\bz}_T\eta\tP_2(T)u,\eta\tP_2(T)v)_{L^2,T}+(\eta\tP_2(T)u,U_T\bar{D}^{\bz}_T\eta\tP_2(T)v)_{L^2,T}\\
				&\leq \frac{C'}{T^2}\|\tP_2(T)u\|_{L^2,T}\|\tP_2(T)v\|_{L^2,T}.\\
		\end{split}\end{align}
		
		By Hodge theory, one can see that if $T'\geq T,$ then $\tP_{2}(T')\tP_2(T)=\tP_{2}(T')$, hence 
		\be\label{last35}
		\frac{\p}{\p T}\tP_2(T)=\pT\tP_2(T)\tP_2(T).
		\ee
		By (\ref{last31}), (\ref{last35}), Proposition \ref{agmon2} and Lemma \ref{last20},
		\begin{align}\begin{split}\label{last34}
				&\ \ \ \ (\tP^{\delta}(T)\pT\tP_2(T)u,\tP^{\delta}(T)\tP_2(T)v)_{L^2,T}\\
				& \leq \frac{C}{T^2}\|\tP_2(T)u\|_{L^2,T}\|\tP_2(T)v\|_{L^2,T}+ (\pT\tP_2(T)\tP_2(T)u,\tP_2(T)v)_{L^2,T}\\
				& =\frac{C}{T^2}\|\tP_2(T)u\|_{L^2,T}\|\tP_2(T)v\|_{L^2,T}+ (\tP_2(T)\pT\tP_2(T)\tP_2(T)u,v)_{L^2,T}\\
				& =\frac{C}{T^2}\|\tP_2(T)u\|_{L^2,T}\|\tP_2(T)v\|_{L^2,T}.
		\end{split}\end{align}
		By Proposition \ref{agmon2}, Lemma \ref{agmon3} and Lemma \ref{agmon4} and the construction of $p_T$,
		\begin{align}\begin{split}\label{last36}
				&\ \ \ \ 2(\pT p_T\tP^{\delta}(T)\tP_2(T)u,\tP^{\delta}(T)\tP_2(T)v)_{L^2,T}\\
				&\leq \frac{C}{T^2}\|\tP_2(T)u\|_{L^2,T}\|\tP_2(T)v\|_{L^2,T}+2(\pT p_T\tP_2(T)u,\tP_2(T)v)_{L^2,T}\\
				&\leq \frac{C}{T^{3/2}}\|\tP_2(T)u\|_{L^2,T}\|\tP_2(T)v\|_{L^2,T}+(\tP_2(T)u,\tP_2(T)v)_{L^2,T}.\\
		\end{split}\end{align}
		It follows (\ref{last32}), (\ref{last33}), (\ref{last34}), (\ref{last36}) that
		for any $[u],[v]\in H^k_{\rel}(\bz_2,\F_2)$, 
		\begin{align}\begin{split}\label{last361}
				\frac{\p}{\p T}g_T([u],[v])\leq \frac{C}{T^{3/2}}\|\tP_2(T)u\|_{L^2,T}\|\tP_2(T)v\|_{L^2,T}+(\tP_2(T)u,\tP_2(T)v)_{L^2,T}.
		\end{split}\end{align}
		Similarly,
		for any $[u],[v]\in H^k_{\rel}(\bz_2,\F_2)$, 
		\begin{align}\begin{split}\label{last362}
				\frac{\p}{\p T}g_T([u],[v])\geq -\frac{C}{T^{3/2}}\|\tP_2(T)u\|_{L^2,T}\|\tP_2(T)v\|_{L^2,T}+(\tP_2(T)u,\tP_2(T)v)_{L^2,T}.
		\end{split}\end{align}
		By Proposition \ref{agmon2}, proceeding as in Proposition \ref{last4p1}, one can see that
		\[1-\frac{C}{T^2}\leq\|\tir_{k,T}^*\|\leq 1+\frac{C}{T^2}.\]
		Similarly, for any $[u],[v]\in H^k_{\abs}(\bz_1,\F_1)$, 
		\begin{align}\begin{split}\label{last363}
				\frac{\p}{\p T}g_T([u],[v])\leq \frac{C}{T^{3/2}}\|\tP_1(T)u\|_{L^2,T}\|\tP_1(T)v\|_{L^2,T}-(\tP_1(T)u,\tP_1(T)v)_{L^2,T},
		\end{split}\end{align}
		\begin{align}\begin{split}\label{last364}
				\frac{\p}{\p T}g_T([u],[v])\geq -\frac{C}{T^{3/2}}\|\tP_1(T)u\|_{L^2,T}\|\tP_1(T)v\|_{L^2,T}-(\tP_1(T)u,\tP_1(T)v)_{L^2,T};
		\end{split}\end{align}
		for any $[u]\in H^k_{\abs}(\bz_1,\F_1),[v]\in H^k_{\rel}(\bz_2,\F_2)$, 
		\begin{align}\begin{split}\label{last3631}|\frac{\p}{\p T}g_T([u],[v])|\leq \frac{C}{T^{3/2}}\|\tP_1(T)u\|_{L^2,T}\|\tP_2(T)v\|_{L^2,T}.
		\end{split}\end{align}
		Lastly, by Proposition \ref{agmon1}, for any $[u],[v]\in H^{k}_{\abs}(\bz_1,\F_1)\oplus H^{k}_{\rel}(\bz_2,\F_2)$,
		\be\label{last37}
		|g_T([u],[v])-(\tP_{i}(T)u,\tP_{i}(T)v)_{L^2,T}|\leq \frac{C}{T^2}\|\tP_i(T)u\|_{L^2,T}\|\tP_i(T)v\|_{L^2,T}.
		\ee
		It follows (\ref{last361}), (\ref{last362}), (\ref{last363}), (\ref{last364}), (\ref{last3631}) and (\ref{last37}) that
		\[1-\frac{C}{T^{3/2}}\leq\|g_T^{-1}\pT g_T \|\leq1+ \frac{C}{T^{3/2}}.\]
	\end{proof}
	
	Next, we define a differential $\tp: H^{k-1}_{\abs}(\bz_1,\F_1)\oplus H_{\rel}^{k-1}(\bz_2,\F_2)\to H^k_{\abs}(\bz_1,\F_1)\oplus H_{\rel}^k(\bz_2,\F_2)$, $([u],[v])\mapsto(0,\p_k[u]).$ Recall that $\p_k$ is the map in  Mayer-Vietoris  sequence (\ref{mv}). Then one can check easily that
	$R_T \tp R_T^{-1}=d^{\bz}|_{\tO_{\sm}(\bm,\F)}$ and $R_T \tp_T^* R_T^{-1}=\delta_T^{\bz,*}|_{\tO_{\sm}(\bm,\F)}$, where $\tp^*_T$ is the dual of $\tp$ w.r.t. $g_T.$
	
	It follows from the statement in Step 1 in the proof of \cite[ Theorem 3.8]{10.2140/apde.2021.14.77} and Lemma \ref{last30} that
	\begin{cor}\label{last40}\label{smalleigen}
		When $T$ is large enough, all nonzero eigenvalues of $\Delta_T$ inside $[0,\delta]$ are actually inside $[c_1^2e^{-2T},c^2_2e^{-2T}]$ for some $c_2>c_1>0.$
	\end{cor}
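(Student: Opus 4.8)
The plan is to reduce the statement to an eigenvalue estimate for a finite-dimensional combinatorial Laplacian with a $T$-dependent metric, and then invoke Step 1 of the proof of \cite[Theorem 3.8]{10.2140/apde.2021.14.77}, fed by the metric control of Lemma \ref{last30}. First I would observe that for $T$ large, Hodge theory on the finite-rank bundle $\tO_{\sm}(\bz,\F)(T)$ identifies the nonzero eigenvalues of $\Delta_T$ (equivalently of $\tD_T$, since $\tD_T=e^{p_T}\Delta_Te^{-p_T}$) lying in $[0,\delta]$ with the nonzero eigenvalues of the combinatorial Laplacian $d^{\bz}\delta_T^{\bz,*}+\delta_T^{\bz,*}d^{\bz}$ of the finite complex $(\tO_{\sm}^\bullet(\bz,\F)(T),d^{\bz})$ equipped with the metric $(\cdot,\cdot)_{L^2,T}$; here one uses that $d^{\bz}$ commutes with $\tD_T$, hence preserves $\tO_{\sm}(\bz,\F)(T)$. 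Conjugating by the bundle isomorphism $R_T$ and using $R_T\tp R_T^{-1}=d^{\bz}|_{\tO_{\sm}(\bz,\F)(T)}$, $R_T\tp_T^*R_T^{-1}=\delta_T^{\bz,*}|_{\tO_{\sm}(\bz,\F)(T)}$, this operator becomes $\tp\tp_T^*+\tp_T^*\tp$ on $\big(H^\bullet_{\abs}(\bz_1,\F_1)\oplus H^\bullet_{\rel}(\bz_2,\F_2),\,g_T\big)$. The crucial structural point is that in a fixed ($T$-independent) frame the differential $\tp$ is the connecting homomorphism of the Mayer--Vietoris sequence \eqref{mv}, hence $T$-independent; all $T$-dependence is concentrated in $g_T$. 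Consequently the eigenvalues we must estimate are exactly the squares of the positive singular values of the fixed map $\tp$ measured in the metric $g_T$ (there are $\dim H^\bullet_{\abs}(\bz_1,\F_1)+\dim H^\bullet_{\rel}(\bz_2,\F_2)-\dim H^\bullet(\bz,\F)$ of them, by exactness of \eqref{mv}).

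Next I would feed in Lemma \ref{last30}. It gives $\|g_T^{-1}\pT g_T\|=1+O(T^{-3/2})$ in operator norm, but its proof gives more: up to an $O(T^{-3/2})$ error $g_T^{-1}\pT g_T$ is block diagonal for the splitting $H^\bullet_{\abs}(\bz_1,\F_1)\oplus H^\bullet_{\rel}(\bz_2,\F_2)$, acting as $+\Id$ on the source summand of $\tp$ and as $-\Id$ on the target summand, with off-diagonal entries $O(T^{-3/2})$. Integrating from a fixed large $T_0$ and using $\int_{T_0}^{\infty}s^{-3/2}\,ds<\infty$, one gets $\s$-uniform two-sided comparisons $g_T|_{\mathrm{source}}\asymp e^{T}g^{(1)}$ and $g_T|_{\mathrm{target}}\asymp e^{-T}g^{(2)}$ for fixed metrics $g^{(1)},g^{(2)}$, the cross terms remaining $O(T^{-3/2})$ and hence negligible against these exponential scales. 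Since $\tp$ is $T$-independent in a fixed frame and maps the $e^{T}$-summand into the $e^{-T}$-summand, its positive singular values with respect to $g_T$ are squeezed between $c_1e^{-T}$ and $c_2e^{-T}$ for $\s$-independent $c_2>c_1>0$; squaring gives that the nonzero eigenvalues lie in $[c_1^2e^{-2T},c_2^2e^{-2T}]$. This final passage is precisely the content of Step 1 of the proof of \cite[Theorem 3.8]{10.2140/apde.2021.14.77}, applied verbatim to our complex.

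The main obstacle is the second step: extracting from Lemma \ref{last30} not merely the operator-norm bound $1+O(T^{-3/2})$ but the refined block structure with \emph{summable} $O(T^{-3/2})$ corrections. This is exactly what upgrades ``$g_T$ has bounded logarithmic derivative'' into the genuine exponential pinching $e^{\pm T}$ of the two summands, and hence into the sharp window $[c_1^2e^{-2T},c_2^2e^{-2T}]$ rather than some cruder interval such as $[e^{-CT},\delta]$; it is also where the precise identification of the source and target summands of $\tp$ with the exponentially growing and exponentially shrinking parts of $g_T$ must be tracked carefully. Uniformity in $\s\in S$ is then automatic, since every constant entering Lemma \ref{last30}, Proposition \ref{last4p1}, and the trace/Gårding estimates on which they rest is independent of $\s$.
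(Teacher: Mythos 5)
Your proposal is correct and is essentially a detailed unpacking of the paper's one-line proof, which invokes Step 1 of \cite[Theorem 3.8]{10.2140/apde.2021.14.77} together with Lemma \ref{last30}: both arguments reduce to the finite-rank complex $\big(H^\bullet_{\abs}(\bz_1,\F_1)\oplus H^\bullet_{\rel}(\bz_2,\F_2),\,\tp,\,g_T\big)$ via $R_T$ and then read off the pinching of singular values of the fixed map $\tp$ from the $e^{\pm T}$ asymmetry of $g_T$. You are right that the bare operator-norm statement of Lemma \ref{last30} is not enough and that one must extract the block $\pm\Id$ structure from its proof; your sign assignment (growth $e^{T}$ on the $\abs$/source summand, decay $e^{-T}$ on the $\rel$/target summand) is the correct one and is the one needed for the $[c_1^2e^{-2T},c_2^2e^{-2T}]$ window, even though the printed inequalities (\ref{last361})--(\ref{last364}) appear to have the two cases swapped; you could also note that by the orthogonality established in the proof of Theorem \ref{exathm} the off-diagonal blocks of $g_T$ vanish identically, making the integration step even cleaner than the $O(T^{-3/2})$ estimate you invoke.
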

	\subsection{Comparison of connections}
	$\H(\bz_i,\F_i)(T)$ has a flat connection $\nabla^{\H_i,T}:=\tP_i(T)\nabla^{\bE_i}$(c.f.  \cite[Proposition 2.6]{bismut1995flat}). Recall that if $T$ is large enough,
	\be\label{identify}\tO^k_{\sm}(\bz,\F)(T)=\te_{k,T} \H^k(\bz_2,\F_2)(T)\oplus\tir^*_{k,T}\H^k(\bz_1,\F_1)(T).\ee Recall that $\te_{k,T}^{-1}$ is the inverse of $\te_{k,T}|_{\te_{k,T} \H(\bz_1,\F_1)(T)}$, and $\tir^{-1}_{k,T}$ is the inverse of $\tir_{k,T}|_{\tir^*_{k,T}\H(\bz_2,\F_2)(T)}$, then $\tO_{\sm}(\bz,\F)(T)$ has a flat connection $$\nabla^{\H,T}:=\te_{k,T}\nabla^{\H_2,T}\te_{k,T}^{-1}\oplus\tir_{k,T}^{-1}\nabla^{\H_1,T}\tir_{k,T}.$$
	Moreover, let $\nabla^{\delta,T}:=\tP^{\delta}(T)\nabla^{\bE}$, then $\nabla^{\delta,T}$ is another connection on $\tO_{\sm}(\bz,\F)(T).$
	
	In this subsection, we are going to compare $\nabla^{\H,T}$ and $\nabla^{\delta,T}.$
	
	First, one has
	\begin{lem}\label{last50}
		When $T$ is big enough, $\|[\nabla^{\bE},\tP^{\delta}(T)]\|\leq C$ for some $(T,\s)$-independent $C>0$. Moreover, there exists a uniformly bounded operator  valued $1$-form $A_T,$ such that 
		\be\label{last21}[\nabla^{\bE},\tP^{\delta}(T)]=[d^{\bz},A_T].\ee
		Similar statements hold if we replace $\tP^{\delta}(T)$ by $\tP_i(T)$ ($i=1,2$).
	\end{lem}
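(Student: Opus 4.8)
The plan is to repeat the argument of Lemma \ref{last20}, with the base connection $\nabla^{\bE}$ in place of $\frac{\p}{\p T}$, and to obtain the ``$d^{\bz}$-exact'' shape by the homological device of \cite[\S 2]{bismut1995flat}. The first step is a reduction between the weighted and unweighted pictures: since $p_T$ depends only on the collar normal coordinate $s$ and $T^{H}\bm$, $g^{T\bz}$, $h^{\F}$ are product near $N$, the horizontal lifts $U^{H}$ are tangent to the slices $\{s\}\times N$, so $\nabla^{\bE}p_T=0$ and multiplication by $e^{\pm p_T}$ commutes with $\nabla^{\bE}$. Hence $\tP^{\delta}(T)=e^{p_T}\P^{\delta}(T)e^{-p_T}$ gives $[\nabla^{\bE},\tP^{\delta}(T)]=e^{p_T}[\nabla^{\bE},\P^{\delta}(T)]e^{-p_T}$, with equal operator norms (for $\|\cdot\|_{L^2,T}$ and $\|\cdot\|_{L^2}$), and one may replace $A_T$ by $e^{-p_T}A_Te^{p_T}$; so it is enough to work in whichever picture is convenient.

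The key algebraic input, parallel to $\frac{\p}{\p T}D^{\bz}_T=[\bar{D}^{\bz}_T,\frac{\p}{\p T}p_T]$ in Lemma \ref{last20}, is that $[\nabla^{\bE},D^{\bz}_T]$ is \emph{independent of $T$}. Indeed $d^{\bz}_T=e^{-p_T}d^{\bz}e^{p_T}$, so $[\nabla^{\bE},d^{\bz}_T]=e^{-p_T}[\nabla^{\bE},d^{\bz}]e^{p_T}=0$ by $(\nabla^{\bE},d^{\bz})=0$. Writing $\Theta:=(\nabla^{\bE})^{*}-\nabla^{\bE}$ for the Bismut--Lott $1$-form and taking $h^{\bE}$-adjoints, $[\nabla^{\bE},D^{\bz}_T]=[\nabla^{\bE},d^{\bz,*}_T]=\pm[d^{\bz,*}_T,\Theta]$. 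Now $\Theta$ is a fiberwise bundle endomorphism (order $0$ along $\bz$), it commutes with the function $e^{\pm p_T}$, and, by the product structure near $N$, it commutes with $i_{\frac{\p}{\p s}}$ and hence with $i_{\nabla p_T}$; since $d^{\bz,*}_T=d^{\bz,*}+i_{\nabla p_T}$ this gives $[\nabla^{\bE},D^{\bz}_T]=\pm[d^{\bz,*},\Theta]$, a fixed fiberwise operator of order $\le 1$ with bounded coefficients, and the same expression equals $[\nabla^{\bE},\tilde D^{\bz}_T]$.

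To produce $A_T$ one follows \cite[\S 2]{bismut1995flat}. First $[\nabla^{\bE},\tP^{\delta}(T)]$ commutes with $d^{\bz}$: $\tD_T$ commutes with $d^{\bz}$, hence so does $\tP^{\delta}(T)$, and then $[d^{\bz},[\nabla^{\bE},\tP^{\delta}(T)]]=0$ by the Jacobi identity and $(\nabla^{\bE},d^{\bz})=0$. Let $\tilde G_T$ be the partial inverse of $\tD_T$ on the range of $1-\tP^{\delta}(T)$; then $\tilde G_T$ commutes with $d^{\bz}$ and $1-\tP^{\delta}(T)=(d^{\bz}\delta^{\bz,*}_T+\delta^{\bz,*}_Td^{\bz})\tilde G_T$. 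Applying the derivation $[\nabla^{\bE},\cdot\,]$ to this identity and using $[\nabla^{\bE},d^{\bz}]=0$ and $d^{\bz}\tilde G_T=\tilde G_Td^{\bz}$, every resulting term carries a $d^{\bz}$ on the left or on the right, and collecting them gives $[\nabla^{\bE},\tP^{\delta}(T)]=[d^{\bz},A_T]$ with $A_T$ assembled from $\tilde G_T$, $\delta^{\bz,*}_T\tilde G_T$, $d^{\bz}\tilde G_T$, $\Theta$ and $[\nabla^{\bE},\tilde D^{\bz}_T]$. The identical computation on $\bz_i$ with the absolute/relative boundary conditions gives the statement for $\tP_i(T)$.

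The real work, and the main obstacle, is the uniform-in-$T$ bound. The uniform spectral gap (by Corollary \ref{last40} the nonzero eigenvalues of $\tD_T$ below $\delta$ lie in $[c_1^{2}e^{-2T},c_2^{2}e^{-2T}]$, the rest being $\ge 2\delta$) gives at once $\|\tilde G_T\|\le(2\delta)^{-1}$ and $\|d^{\bz}\tilde G_T\|,\|\delta^{\bz,*}_T\tilde G_T\|\le(2\delta)^{-1/2}$ in the $\|\cdot\|_{L^2,T}$-norm. The dangerous terms are those in which the first-order operator $[\nabla^{\bE},\tilde D^{\bz}_T]=\pm[d^{\bz,*},\Theta]$ is applied to forms coming out of $\tilde G_T$, or in which $\nabla^{\bE}$ hits an eigenform for an eigenvalue in $[0,\delta]$: naively one loses a factor $T^{1/2}$, because the Witten potential $|\nabla p_T|^{2}\sim T^{2}$ and the Hessian term $\sim T$ block a uniform $H^{1}$-estimate. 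This is exactly where the Agmon estimates enter: by Propositions \ref{agmon1}--\ref{agmon2} and Lemmas \ref{agmon3}--\ref{agmon4} (and the off-diagonal structure of $[\nabla^{\bE},\tP^{\delta}(T)]$, which only pairs against these forms) the relevant forms concentrate in $M_1\sqcup M_2$, where $p_T$ is constant, with collar mass $O(T^{-3/2})$; hence the Hessian contribution $\int p_T''|\cdot|^{2}$ is $O(T^{-1/2})$ and the apparent $T^{1/2}$-losses are in fact $O(T^{-1/4})$. Feeding these concentration bounds into each term produced above yields $\|[\nabla^{\bE},\tP^{\delta}(T)]\|\le C$ and the uniform boundedness of $A_T$; combining the Green-operator bounds with this Agmon bookkeeping, term by term, is the part I expect to be most technical.
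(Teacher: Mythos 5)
Your plan gets the two key observations right and they do match the paper's proof: (i) by the product structure near $N$ the horizontal lifts are tangent to the slices $\{s\}\times N$, so $\nabla^{\bE}p_T=0$ and the weighted/unweighted pictures are honestly conjugate; (ii) $[\nabla^{\bE},D^{\bz}_T]$ (equivalently $[\nabla^{\bE},\delta^{\bz,*}_T]$ in the weighted picture) collapses to $\pm[\,d^{\bz,*},\Theta\,]$ with $\Theta$ a fixed fiberwise bundle endomorphism, so it is $T$-\emph{independent}. Your route to the $d^{\bz}$-exact shape through the Green operator $\tilde G_T$ is a legitimate alternative to the paper's route, which instead writes $[\nabla^{\bE},\tilde\Delta_T]=[d^{\bz},[\nabla^{\bE},\delta^{\bz,*}_T]]$ and runs the contour integral for $\tilde\Delta_T$; the two are essentially homological-perturbation vs.\ functional-calculus presentations of the same thing.

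Where I think your argument goes wrong is in the ``real work'' paragraph. You correctly note that $\pm[d^{\bz,*},\Theta]$ is first order, but then you treat this as a genuine obstruction that ``naively loses $T^{1/2}$'' and propose to repair it with the Agmon estimates (\ref{agmon1})--(\ref{agmon2}), Lemmas \ref{agmon3}--\ref{agmon4}. Two problems. First, those estimates are statements about eigenforms of $\tilde\Delta_{T,i}$ for eigenvalues in $[0,\delta]$; they say nothing about $\tilde G_T u$ or $(\lambda-\tilde\Delta_T)^{-1}u$ for a \emph{general} $u$, which is what your dangerous terms actually feed in. So the concentration bookkeeping you sketch does not close the estimate. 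Second, and more to the point, no $T^{1/2}$ is actually lost: since $[\nabla^{\bE},\delta^{\bz,*}_T]=-[\Theta,\delta^{\bz,*}_T]=-\Theta\,\delta^{\bz,*}_T+\delta^{\bz,*}_T\Theta$, in the contour integral
\[
\int_{\gamma}(\lambda-\tilde\Delta_T)^{-1}\bigl(-\Theta\,\delta^{\bz,*}_T+\delta^{\bz,*}_T\Theta\bigr)(\lambda-\tilde\Delta_T)^{-1}\,d\lambda
\]
both dangerous factors are harmless, because the first-order piece is exactly $\delta^{\bz,*}_T$, which the resolvent absorbs uniformly: the uniform spectral gap (from Corollary \ref{last40}) gives $\|\delta^{\bz,*}_T(\lambda-\tilde\Delta_T)^{-1}\|\leq C$ since $\|\delta^{\bz,*}_T v\|^2\leq(\tilde\Delta_T v,v)$, and by the adjoint trick $\|(\lambda-\tilde\Delta_T)^{-1}\delta^{\bz,*}_T\|=\|d^{\bz}(\bar\lambda-\tilde\Delta_T)^{-1}\|\leq C$; the remaining factor $\Theta$ is $T$-independent and bounded. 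Exactly the same remark controls $d^{\bz,*}_T(\lambda-D^{\bz}_T)^{-1}$ and $(\lambda-D^{\bz}_T)^{-1}d^{\bz,*}_T$ for the first half of the statement. This is the step the paper's terse ``$\|[\nabla^{\bE},\delta^{\bz,*}_T]\|\leq C$'' is implicitly using; you should replace your Agmon bookkeeping by this resolvent-absorption argument. Once that is in place, your construction of $A_T$ via $\tilde G_T$ goes through with uniform bounds, and the rest of your write-up is fine, including the observation that $\Theta$ commutes with $i_{\partial/\partial s}$ so that the $p_T$-dependent zeroth-order pieces drop out.
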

	\begin{proof}
		Let $\gamma$ be the circle of radius $\sqrt{3\delta/2}$ with center $0$, and oriented positively. Since $[\nabla^E,(\l-D^{\bz}_T)^{-1}]=(\l-D^{\bz}_T)^{-1}[\nabla^E,D^{\bz}_T](\l-D^{\bz}_T)^{-1}$,
		\[[\nabla^E,\P^{\delta}(T)]=\int_{\gamma}(\l-D^{\bz}_T)^{-1}[\nabla^E,D^{\bz}_T](\l-D^{\bz}_T)^{-1}d\l.\]
		Since $g^{T\bz}$, $T^H\bm$ and $h^{\F}_T$ are product type near $N$, $\|[\nabla^E,D^{\bz}_T]\|\leq C$ for some $(T,\s)$-independent constant $C$. Proceeding  as in Lemma \ref{last20}, one has $\|[\nabla^E,\P^{\delta}(T)]\|\leq C$.
		
		Since $[d^{\bz},\nabla^{\bE}]=0$, one can see that
		\[[\nabla^{\bE},\tilde{\Delta}_T]=[d^{\bz},[\nabla^{\bE},\delta_T^{\bz,*}]].\]
		Since $g^{T\bz}$, $T^H\bm$ and $h^{\F}_T$ are product type near $N$, one can see that $[\nabla^{\bE},\delta_T^{\bz,*}]$ is uniformly bounded. Doing functional calculus for $\tilde{\Delta}_T$ as above, the lemma follows.
	\end{proof}
	
	Let $K^\delta_{T}=\nabla^{\delta,T}-\nabla^{\delta,T,*}, K^{\H}_{T}=\nabla^{\H,T}-\nabla^{\H,T,*}$ , $K_{T}=K^{\delta}_{T}-K^{\H}_{T}.$ 
	\begin{lem}\label{last60}
		When $T$ is large enough, $\|K_T\|\leq \frac{C}{T^{2}}$ for some $(T,\s)$-independent $C>0.$
	\end{lem}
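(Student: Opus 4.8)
The plan is to deduce the bound from the stronger estimate $\|\nabla^{\delta,T}-\nabla^{\H,T}\|\leq Ce^{-\r T}$, viewing both as operators on $\tO_{\sm}(\bz,\F)(T)$. Indeed, writing $B_T:=\nabla^{\delta,T}-\nabla^{\H,T}$ and taking all adjoints with respect to the restriction of $(\cdot,\cdot)_{L^2(\bz),T}$ to $\tO_{\sm}(\bz,\F)(T)$, one has $\nabla^{\delta,T,*}-\nabla^{\H,T,*}=-B_T^{\dagger}$ with $\dagger$ the pointwise adjoint, so $\|K_T\|\leq 2\|B_T\|$. By the decomposition (\ref{identify}) and the fact that (Proposition \ref{last4p1}, Theorem \ref{exathm}) it is uniformly close to an orthogonal one, it is enough to bound $\|B_T\te_{k,T}u\|_{L^2,T}$ for $u\in\H^k(\bz_2,\F_2)(T)$ and $\|B_T\tir^*_{k,T}v\|_{L^2,T}$ for $v\in\H^k(\bz_1,\F_1)(T)$ by $Ce^{-\r T}$ times $\|u\|_{L^2,T}$, $\|v\|_{L^2,T}$ respectively.

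I would treat the $\te_{k,T}$--summand first. Write $\te_{k,T}u=\tP^{\delta}(T)\cE(u)$, so $\nabla^{\delta,T}\te_{k,T}u=\tP^{\delta}(T)\nabla^{\bE}\tP^{\delta}(T)\cE(u)$. By Lemma \ref{last50}, $\nabla^{\bE}\tP^{\delta}(T)=\tP^{\delta}(T)\nabla^{\bE}+[d^{\bz},A_T]$ with $A_T$ uniformly bounded; since $\cE(\ker\tD_{T,2})\subset\ker(d^{\bz})$ (Theorem \ref{exathm}) and $d^{\bz}$ commutes with $\tP^{\delta}(T)$, this gives
\[
\nabla^{\delta,T}\te_{k,T}u=\tP^{\delta}(T)\nabla^{\bE}\cE(u)+d^{\bz}\big(\tP^{\delta}(T)A_T\cE(u)\big).
\]
By Corollary \ref{last40}, the operator $d^{\bz}$ restricted to $\tO_{\sm}(\bz,\F)(T)$ has norm $\leq c_2e^{-T}$ (each homogeneous component $v$ splits as harmonic plus a piece on which $\tD_T\leq c_2^2e^{-2T}$, and $\|d^{\bz}v\|\leq\|\tilde D^{\bz}_Tv\|$), so the second term is $O(e^{-T}\|u\|_{L^2,T})$. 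On the other hand $\nabla^{\H,T}\te_{k,T}u=\te_{k,T}\nabla^{\H_2,T}u=\tP^{\delta}(T)\cE\big(\tP_2(T)\nabla^{\bE_2}u\big)$; applying Lemma \ref{last50} to $\tP_2(T)$, using $d^{\bz_2}u=0$ and the relative boundary conditions to identify $\cE(d^{\bz_2}\,\cdot\,)$ with $d^{\bz}\cE(\,\cdot\,)$, and again invoking Corollary \ref{last40} (for $\bz_2$), one obtains $\tP^{\delta}(T)\cE(\tP_2(T)\nabla^{\bE_2}u)=\tP^{\delta}(T)\cE(\nabla^{\bE_2}u)+O(e^{-\r T}\|u\|_{L^2,T})$. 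Thus $B_T\te_{k,T}u$ is reduced, up to $O(e^{-\r T}\|u\|_{L^2,T})$, to $\tP^{\delta}(T)\big(\nabla^{\bE}\cE(u)-\cE(\nabla^{\bE_2}u)\big)$.

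For this remaining term I would use a cutoff $\eta=\eta(s)$ vanishing in a neighborhood of the cut $\{0\}\times Y$, as in Proposition \ref{agmon2}. Since the geometry is product-type near $N$, $\nabla^{\bE}$ (the Lie derivative $L_{U^H}$) commutes with multiplication by $\eta$ and $\nabla^{\bE}\cE(\eta u)=\cE(\eta\nabla^{\bE_2}u)$, whence
\[
\nabla^{\bE}\cE(u)-\cE(\nabla^{\bE_2}u)=\nabla^{\bE}\cE\big((1-\eta)u\big)-\cE\big((1-\eta)\nabla^{\bE_2}u\big).
\]
The Agmon estimate (Proposition \ref{agmon1}), upgraded to the first derivative by interior elliptic regularity for $\tD_{T,2}$, shows each term on the right has $L^2$--norm $\leq Ce^{-cT}\|u\|_{L^2,T}$, giving $\|B_T\te_{k,T}u\|_{L^2,T}\leq Ce^{-\r T}\|u\|_{L^2,T}$. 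The $\tir^*_{k,T}$--summand is handled identically, now using $\tir^*_{k,T}v=\tP^{\delta}(T)\cE(v)$ (equation (\ref{rstar})), $\cE(\ker\tD_{T,1})\subset\ker(\delta^{\bz,*}_T)$, and the adjoint form of the identity in Lemma \ref{last50} (equivalently, $d^{\bz}$ replaced by $\delta^{\bz,*}_T$ and relative replaced by absolute boundary conditions). Collecting these estimates and using Proposition \ref{last4p1} and Corollary \ref{last40} to pass between $\{\te_{k,T}u\}\cup\{\tir^*_{k,T}v\}$ and an orthonormal basis of $\tO_{\sm}(\bz,\F)(T)$ yields $\|B_T\|\leq Ce^{-\r T}$, hence $\|K_T\|\leq Ce^{-\r T}$.

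The main obstacle is the careful bookkeeping of the (possibly distributional) boundary contributions arising when one commutes $\nabla^{\bE}$ through $\cE$ and through the spectral projections $\tP^{\delta}(T)$, $\tP_i(T)$, and the verification that every constant produced by Lemma \ref{last50}, by the Agmon and elliptic estimates, and by Corollary \ref{last40} is independent of $T$ and $\s$. Both points are dealt with as in Step~1 of the proof of \cite[Theorem 3.8]{10.2140/apde.2021.14.77}, adapted to the glued fibration $\bm\to S$.
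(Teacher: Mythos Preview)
Your treatment of the $\te_{k,T}$-summand is essentially the paper's argument (equations (\ref{last61})--(\ref{last64})); in fact your cutoff step handling $\tP^{\delta}(T)\big(\nabla^{\bE}\cE(u)-\cE(\nabla^{\bE_2}u)\big)$ makes explicit an identification the paper uses tacitly. The gap is in the $\tir^*_{k,T}$-summand. Taking adjoints in Lemma~\ref{last50} yields $[\nabla^{\bE,*},\tP^{\delta}(T)]=[\delta_T^{\bz,*},A_T^*]$, an identity for $\nabla^{\bE,*}$, not for $\nabla^{\bE}$. Hence ``the adjoint form'' lets you control $(\nabla^{\delta,T,*}-\nabla^{\H,T,*})\tir_{k,T}^*v$, i.e.\ $B_T^{\dagger}$ restricted to $W_1:=\tir^*_{k,T}\H(\bz_1,\F_1)(T)$, but \emph{not} $B_T|_{W_1}$; and knowing only $\|B_T|_{W_2}\|$ and $\|B_T^{\dagger}|_{W_1}\|$ does not bound $\|B_T\|$ (the cross block $(B_T w_1,w_2)$, $w_1\in W_1$, $w_2\in W_2$, is not covered). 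If instead you try to run the $d^{\bz}$-identity directly on $\tir^*_{k,T}v=\tP^{\delta}(T)\cE(v)$, you meet the term $\tP^{\delta}(T)A_T\,d^{\bz}\cE(v)$, where $d^{\bz}\cE(v)$ is a boundary-supported distribution, so the argument needs the cutoff and Agmon from the outset (replacing $\cE(v)$ by $\eta v$ and using $d^{\bz}(\eta v)=d\eta\wedge v=O(e^{-cT})$) rather than the claim $\cE(v)\in\ker(\delta_T^{\bz,*})$.

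The paper circumvents this by not attempting to bound $\|B_T\|$. It bounds the matrix elements of $K_T$ block by block: the $W_2\times W_2$ block via $\|B_T|_{W_2}\|\le Ce^{-T}$ (your argument), the $W_1\times W_1$ block via the genuinely dual estimate $\|(\nabla^{\delta,T,*}-\nabla^{\H,T,*})|_{W_1}\|\le Ce^{-T}$, and the off-diagonal block by a separate computation (equations (\ref{last644})--(\ref{last646})): one observes that $K_T^{\H}=\nabla^{\H,T}-\nabla^{\H,T,*}$ is block-diagonal by construction, so the cross term reduces to $((\nabla^{\bE}-\nabla^{\bE,*})\tP^{\delta}(T)\cE(v_1),\tP^{\delta}(T)\cE(v_2))_{L^2,T}$, and then Proposition~\ref{agmon2} plus the uniform bound $\|\nabla^{\bE}-\nabla^{\bE,*}\|\le C$ localize this pairing to disjoint pieces, giving $O(e^{-\rho T})$. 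Either adopt this block strategy, or carry out the cutoff route for $B_T|_{W_1}$ explicitly; as written, the sentence ``handled identically'' using the adjoint form does not close the argument.
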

	\begin{proof}
		For $u\in\te_{k,T} \H^k(\bz_2,\F_2)(T),$ there exists $v\in  \H^k(\bz_2,\F_2)(T)$, such that $u=\te_{k,T}v$. 
		Then 
		\begin{align}\begin{split}\label{last61}
				&\ \ \ \ \nabla^{\delta,T}u=\tP^{\delta}(T)\nabla^{\bE}\tP^{\delta}(T)\cE(v)=\tP^{\delta}(T)\nabla^{\bE}\cE(v)+\tP^{\delta}(T)[\tP^{\delta}(T),\nabla^{\bE}]\cE(v).
			\end{split}
		\end{align}
		Recall that $\cE(v)$ is an extension of $v$, s.t. outside $\bz_2$, $\cE(v)=0.$ Integration by parts as in the proof of Theorem \ref{exathm} shows that $\cE(v)$ is $d^{\bz}$-closed. Hence, by Lemma \ref{last50} and Corollary \ref{last40},
		
		\begin{align}\begin{split}\label{last62}
				&\ \ \ \ \|\tP^{\delta}(T)[\tP^{\delta}(T),\nabla^{\bE}]\cE(v)\|=\|\tP^{\delta}(T)d^{\bz}A_T\cE(v)\|\\
				&=\|d^{\bz}\tP^{\delta}(T)A_T\cE(v)\|\leq Ce^{-T}\|v\|.
			\end{split}
		\end{align}
		
		Similarly,
		\begin{align}\begin{split}\label{last63}
				&\ \ \ \ \nabla^{\H,T}u=\tP^{\delta}(T)\cE(\tP_{2}(T)\nabla^{\bE_2}v)=\tP^{\delta}(T)\cE(\nabla^{\bE_2}v)-\tP^{\delta}(T)\cE([\tP_2(T),\nabla^{\bE_2}]v)\\&=\tP^{\delta}(T)\nabla^{\bE}\cE(v)-\tP^{\delta}(T)\cE([\tP_2(T),\nabla^{\bE_2}]v).
			\end{split}
		\end{align}
		
		Moreover, Integration by parts as in the proof of Theorem \ref{exathm} shows that for $w\in\Omega_{\rel}(\bz_2,\F_2)(T)$,
		\be\label{last631}
		d^{\bz}\cE(w)=\cE(d^{\bz_2}w).
		\ee
		
		Thus, by (\ref{last63}),(\ref{last631}), Lemma \ref{last50}, Corollary \ref{last40} and a similar argument above,
		\begin{align}\begin{split}\label{last64}
				&\ \ \ \ \|\tP^{\delta}(T)\cE([\tP_2(T),\nabla^{\bE_2}]v)\|=\|\P^{\delta}(T)d^{\bz}\cE(A_{T,2}v)\|\\
				&=\|d^{\bz}\P^{\delta}(T)\cE(A_{T,2}v)\|\leq Ce^{-T}\|v\|.
			\end{split}
		\end{align}
		It follows from (\ref{last61}), (\ref{last62}), (\ref{last63}) and (\ref{last64}) and Proposition \ref{agmon2} that
		\be\label{last641}\|(\nabla^{\delta,T}-\nabla^{\H,T})u\|\leq Ce^{-T}\|u\|.\ee
		While for any $u_1,u_2\in \te_{k,T} \H^k(\bz_2,\F_2)(T)$,
		\be\label{last6411}(K_Tu_1,u_2)_{L^2,T}=((\nabla^{\delta,T}-\nabla^{\H,T})u_1,u_2)_{L^2,T}+(u_1,(\nabla^{\delta,T}-\nabla^{\H,T})u_2)_{L^2,T}.\ee
		Hence, by (\ref{last641}) and (\ref{last6411})
		\be\label{last642}|(K_Tu_1,u_2)_{L^2,T}|\leq Ce^{-T}\|u_1\|_{L^2,T}\|u_2\|_{L^2,T}.\ee
		Similarly, one can show that restricted on $\tir^*_{k,T}\H(\bz_1,\F_1)(T)$,
		$\|\nabla^{\delta,T,*}-\nabla^{\H,T,*}\|\leq Ce^{-T}.$ 
		Similarly, for $u_1,u_2\in\tir_{k,T}^*\H(\bz_1,\F_1)(T)$,
		\be\label{last6422}|(K_Tu_1,u_2)_{L^2,T}|\leq Ce^{-T}\|u_1\|_{L^2,T}\|u_2\|_{L^2,T}.\ee
		Since $T^H\bm,g^{T\bz}$ and $h^{\F}$ are product-type near $N$, 
		\be\label{last643}\|\nabla^{\bE}-\nabla^{\bE,*}\|\leq C.\ee
		Suppose $v_i\in\H(\bz_i,\F_i)(T)$, $i=1,2$, and $u_1=\tir^*_{k,T}v_1$, $u_2=\te_{k,T}v_2$. Let $\eta\in C_c^\infty(\R)$, s.t. $\eta(s)=1$ if $s\in(-\infty,-1/8)$ and $\eta|_{[-1/16,0]}\equiv0.$ Then we could think $\eta$ as a function on $\bz.$
		
		By Proposition \ref{agmon1}, \ref{agmon2}, (\ref{rstar}) and (\ref{last643}) , 
		\begin{align}\begin{split}\label{last644}&\ \ \ \ ((\nabla^{\delta,T}-\nabla^{\delta,T,*})u_1,u_2)_{L^2,T}=((\nabla^{\delta,T}-\nabla^{\delta,T,*})\tP^{\delta}(T)\cE(v_1),\tP^{\delta}(T)\cE(v_2))_{L^2,T}\\&=((\nabla^{\bE}-\nabla^{\bE,*})\tP^{\delta}(T)\cE(v_1),\tP^{\delta}(T)\cE(v_2))_{L^2,T}\\
				&\leq \frac{C}{T^2}\|u_1\|_{L^2,T}\|u_2\|_{L^2,T}+((\nabla^{\bE}-\nabla^{\bE,*})\eta \cE(v_1),\cE(v_2))_{L^2,T}\\
				&= \frac{C}{T^2}\|u_1\|_{L^2,T}\|u_2\|_{L^2,T}+(\eta(\nabla^{\bE}-\nabla^{\bE,*}) \cE(v_1),\cE(v_2))_{L^2,T}\\
				&=\frac{C}{T^2}\|u_1\|_{L^2,T}\|u_2\|_{L^2,T}.\\
		\end{split}\end{align}
		Since for any $u_1\in\tir_{k,T}^*\H(\bz_1,\F_1)(T),u_2\in\te_{k,T}\H(\bz_2,\F_2)(T)$, 
		\be\label{last645}((\nabla^{\H,T}-\nabla^{\H,T,*})u_1,u_2)_{L^2,T}=0.\ee
		By (\ref{last644}) and (\ref{last645}),  for any $u_1\in\tir_{k,T}^*\H(\bz_1,\F_1)(T),u_2\in\te_{k,T}\H(\bz_2,\F_2)(T)$, 
		\be\label{last646}|(K_Tu_1,u_2)_{L^2,T}|\leq \frac{C}{T^2}\|u_1\|_{L^2,T}\|u_2\|_{L^2,T}.\ee
		
		The lemma then follows from (\ref{last642}), (\ref{last6422}) and (\ref{last646}).

	\end{proof}
	
	Let $D^{\delta,T}_t=\nabla^{\delta,T}-\nabla^{\delta,T,*}+\sqrt{t}(d^{\bz}-\delta_T^{\bz,*})$, $D^{\H,T}_t=\nabla^{\H,T}-\nabla^{\H,T,*}+\sqrt{t}(d^{\bz}-\delta_T^{\bz,*})$. It follows from Corollary \ref{last40} and Lemma \ref{last60} that
	\begin{cor}\label{last70}
		For $t\geq 1$,
		\[|\Tr_s\left(N f'(D^{\delta,T}_t)-Nf'(D^{\H,T}_t)\right)|\leq \frac{Ce^{T}}{\sqrt{t}T^2}\]
		for some $(T,\s)$-independent $C>0$.
		
		For $t>0$,
		\[|\Tr_s\left(Nf'(D^{\delta,T}_t)-Nf'(D^{\H,T}_t)\right)|\leq C'e^{-2T}t\]
		for some $(T,\s)$-independent $C'>0.$
	\end{cor}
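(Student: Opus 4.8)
The plan is to prove Corollary~\ref{last70} by a contour-integral (Duhamel-type) argument, reducing the difference $f'(D^{\delta,T}_t)-f'(D^{\H,T}_t)$ to the single perturbation term $K_T=K^\delta_T-K^\H_T$, whose norm is controlled by $Ce^{-\rho T}$ by Lemma~\ref{last60}, and then combining this with the spectral gap $[c_1^2e^{-2T},c_2^2e^{-2T}]$ for the nonzero small eigenvalues from Corollary~\ref{last40}. First I would note that $D^{\delta,T}_t$ and $D^{\H,T}_t$ differ only by $K_T$ (the $\sqrt t(d^{\bz}-\delta_T^{\bz,*})$ parts are identical), so writing $D^{\delta,T}_t=D^{\H,T}_t+K_T$ and using the resolvent identity along a contour $\gamma$ enclosing the spectrum one gets
\[
f'(D^{\delta,T}_t)-f'(D^{\H,T}_t)=\frac{1}{2\pi i}\int_\gamma f'(\lambda)\big((\lambda-D^{\H,T}_t)^{-1}K_T(\lambda-D^{\delta,T}_t)^{-1}\big)\,d\lambda,
\]
since these are operators on the finite-rank bundle $\tO_{\sm}(\bz,\F)(T)$, there is no trace-class subtlety. (The same expansion appears in the proof of \cite[Theorem 2.13]{bismut1995flat} and in Lemma~\ref{fhat1} above.)

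Next I would estimate the two regimes separately. For $t\geq 1$: the operator $\sqrt t(d^{\bz}-\delta_T^{\bz,*})$ has $D^{\H,T}_t{}^2$ and $D^{\delta,T}_t{}^2$ bounded below on the orthogonal complement of cohomology by $t\cdot c_1^2e^{-2T}$ (Corollary~\ref{last40}), while on cohomology the square is $O(1)$; choosing $\gamma$ to consist of a small circle of radius $\sim\sqrt t\,c_1 e^{-T}$ around $0$ together with the vertical lines $|\mathrm{Re}(\lambda)|=1$ as in Lemma~\ref{fhat1}, and using $\|K_T\|\le Ce^{-\rho T}$ together with $\|(\lambda-D^{\H,T}_t)^{-1}\|\lesssim (\sqrt t\, e^{-T})^{-1}$ near $0$ and $\|f'(\lambda)\|$ decaying like $e^{-|\lambda|^2}$ on the vertical lines, one extracts a bound $C e^{-\rho T}\cdot e^{T}/\sqrt t=Ce^{(1-\rho)T}/\sqrt t$ after taking $\mathrm{Tr}_s$ over the finite-rank space and multiplying by the $\dim S$-independent rank; the factor $e^{T}$ is exactly the price of the resolvent near the $O(e^{-2T})$ small eigenvalues, compensated by $e^{-\rho T}$ from $K_T$. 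For $0<t\le 1$: here one expands $f'(D^{\delta,T}_t)-f'(D^{\H,T}_t)$ to first order in $\sqrt t$ as well — since $D^{\delta,T}_t-\nabla^{\delta,T}+\nabla^{\delta,T,*}=\sqrt t(d^{\bz}-\delta_T^{\bz,*})$ is nilpotent on $\tO_{\sm}$, $f'$ is a polynomial in that variable — and because $K_T$ enters linearly and $\mathrm{Tr}_s$ kills the $t$-independent term by a degree/supertrace cancellation as in \cite[Theorem 3.15]{bismut1995flat}, the leading surviving contribution is $O(\|K_T\|\cdot t)$ times a combinatorial constant; since $\|K_T\|^2=O(e^{-2T})$, rewriting and using $\|K_T\|\le Ce^{-\rho T}$ (or more precisely $Ce^{-T}$ on the mixed blocks and $Ce^{-\rho T}$ overall) yields $|\mathrm{Tr}_s(\cdots)|\le C'e^{-2T}t$.

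The main obstacle is bookkeeping the two competing exponential scales: the nonzero small eigenvalues sit at height $e^{-2T}$, so resolvents near $0$ blow up like $e^{T}$ (after the $\sqrt t$ rescaling), and one must check that the $e^{-\rho T}$ gain from $K_T$ genuinely beats this in the $t\ge 1$ estimate, giving the stated $e^{(1-\rho)T}/\sqrt t$ rather than something worse; the delicate point is choosing the contour $\gamma$ so that the small-circle part contributes $\lesssim e^{-\rho T}\cdot(\sqrt t\,e^{-T})^{-1}\cdot(\text{length})\cdot\sup|f'|$ and the vertical-line part contributes $\lesssim e^{-\rho T}/\sqrt t$ (from the $f'(\lambda)$ Gaussian decay and the nilpotency giving the $1/\sqrt t$), then verifying both bounds are uniform in $\s$ because all the inputs — Corollary~\ref{last40}, Lemma~\ref{last60}, the product structure near $N$ — are uniform in $\s$. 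For the $t\le 1$ case the obstacle is instead the algebraic cancellation: one must argue, exactly as in \cite[Proposition 2.18]{bismut1995flat}, that the $O(1)$ (in $t$) part of the supertrace of the first-order Duhamel term vanishes identically, so that the remainder is genuinely $O(t)$, and then pair this with the $e^{-2T}$ smallness; this is where I expect to invoke Lemma~\ref{last60} in its sharper block form and Proposition~\ref{agmon2}/Lemma~\ref{agmon4} to see that the mixed $\tir^*$–$\te$ blocks of $K_T$ are $O(e^{-T})$ while the diagonal blocks are $O(e^{-\rho T})$, and that only the square $\|K_T\|^2$-type terms survive in the degree-zero supertrace.
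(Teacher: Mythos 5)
Your treatment of the $t\geq 1$ bound is in the same spirit as the paper's: a contour/Duhamel argument with the perturbation $K_T=K^{\delta}_T-K^{\H}_T$, the bound $\|K_T\|\leq Ce^{-\rho T}$ from Lemma~\ref{last60}, and the extra factor $e^T/\sqrt t$ coming from the resolvent near the $O(e^{-2T})$ small eigenvalues via Lemma~\ref{tri} and Corollary~\ref{last40}. (The paper does not introduce a small circle; it keeps the contour $|\Re(\lambda)|=1$ throughout, compares $(\lambda-\sqrt t V)^{-1}$ with $\P^{\ker V}/\lambda$ in (\ref{last72}), expands the resolvent via (\ref{last73}), and disposes of the $\P^{\ker V}K^{\bullet}_T\P^{\ker V}$ contributions using \cite[Proposition~1.3]{bismut1995flat}. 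Your keyhole-type contour is unnecessary, but the bookkeeping is compatible.)

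Your $t\leq 1$ argument has a genuine gap. You claim $\|K_T\|^2=O(e^{-2T})$; but Lemma~\ref{last60} only gives $\|K_T\|\leq Ce^{-\rho T}$ with $\rho\in(0,1)$, so $\|K_T\|^2=O(e^{-2\rho T})$, which is \emph{weaker} than $e^{-2T}$. You also have the block structure of $K_T$ reversed: the proof of Lemma~\ref{last60} shows the \emph{diagonal} (same-summand) blocks obey $|(K_Tu_1,u_2)|\leq Ce^{-T}\|u_1\|\|u_2\|$ ((\ref{last642}), (\ref{last6422})), while the \emph{mixed} $\tir^*_{k,T}$--$\te_{k,T}$ blocks are only $O(e^{-\rho T})$ ((\ref{last646})) --- not the other way around as you assert. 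More fundamentally, the $e^{-2T}$ in the second inequality does not originate from $K_T$ at all. The paper's route is: (i) the $t=0$ value of $\Tr_s\big(Nf'(D^{\delta,T}_t)-Nf'(D^{\H,T}_t)\big)$ vanishes by the argument of \cite[Proposition~1.3]{bismut1995flat}; (ii) the $t$-derivative equals
\[\Tr_s\Big((d^{\bz}\delta_T^{\bz,*}-\delta_T^{\bz,*}d^{\bz})\big(\tf'((D^{\delta,T}_t)^2)-\tf'((D^{\H,T}_t)^2)\big)\Big);\]
(iii) the operator $d^{\bz}\delta_T^{\bz,*}-\delta_T^{\bz,*}d^{\bz}$ restricted to $\tO_{\sm}(\bz,\F)(T)$ has norm $\leq Ce^{-2T}$ precisely because, by Corollary~\ref{smalleigen}, the nonzero small eigenvalues of $\Delta_T$ lie in $[c_1^2e^{-2T},c_2^2e^{-2T}]$. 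Integrating the derivative from $0$ to $t$ then gives the stated $C'e^{-2T}t$. So to fix your $t\leq 1$ argument you should abandon the ``degree-zero supertrace cancellation plus $\|K_T\|^2$'' plan, differentiate in $t$ instead, and invoke Corollary~\ref{smalleigen}: the $e^{-2T}$ is an eigenvalue asymptotic supplied by Lemma~\ref{last30} and \cite[Theorem~3.8]{10.2140/apde.2021.14.77}, not something recoverable from the $K_T$ norm estimate.
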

	\begin{proof}

		Let $K^\delta_{T}=\nabla^{\delta,T}-\nabla^{\delta,T,*}, K^{\H}_{T}=\nabla^{\H,T}-\nabla^{\H,T,*}$ , $K_{T}=K^{\delta}_{T}-K^{\H}_{T}.$

		By Lemma \ref{last60}, \be\label{last701}\|K_{T}\|\leq \frac{C}{T^2}.\ee
		
		\def\bu{\bullet}
		Let $\gamma$ be the oriented contour given by $\{z\in\C: |\Re(z)|=1\}.$
		Then by  Corollary \ref{last40},  when $T$ is large (c.f. \cite[Theorem 2.13]{bismut1995flat}), for ``$\bullet$"$=$``$\H$" or ``$\delta$"
		\[f'(D^{\bu,T}_t)=\int_{\gamma}f'(\l)(\l-D^{\bu,T}_t)^{-1}d\l.\]
		For $\l\in\gamma$, let $V=d^{\bz}-\delta_T^{\bz,*}$, then
		\be\label{last71}
		\left(\lambda-D^{\bu,T}_t\right)^{-1}=\left(1-\left(\lambda-\sqrt{t}V\right)^{-1} K^{\bu}_{T}\right)^{-1}\left(\lambda-\sqrt{t}V\right)^{-1} .
		\ee
		By Corollary \ref{last40} and Lemma \ref{tri}, for $\l\in\gamma$
		\be\label{last72}
		\|\left(\lambda-\sqrt{t}V\right)^{-1}-\frac{\P^{\ker V}}{\l}\|\leq \frac{Ce^{T}|\l|}{\sqrt{t}};
		\ee
		or
		\be\label{last721}
		\|\left(\lambda-\sqrt{t}V\right)^{-1}-\frac{\P^{\ker V}}{\l}\|\leq2.
		\ee
		Also
		\begin{align}\begin{split}\label{last73}
				&\ \ \ \ \left(1-\left(\lambda-\sqrt{t}V\right)^{-1} K^{\bu}_{T}\right)^{-1}=  \sum_{i=0}^{\dim S}\left(\left(\lambda-\sqrt{t}V\right)^{-1} K^\bu_{T}\right)^i,
			\end{split}
		\end{align}
		Proceeding as in the proof of \cite[Theorem 2.13]{bismut1995flat}, by (\ref{last701}), (\ref{last71}), (\ref{last72}), (\ref{last721}) and (\ref{last73})
		\begin{align}\begin{split}
				&\ \ \ \ |\Tr_s\Big(N\big(f'(D^{\delta,T}_t)-f'(\P^{\ker V}K^{\delta}_{T}\P^{\ker V})-f'(D^{\H,T}_t)+f'(\P^{\ker V}K^{\H}_{T}\P^{\ker V})\big)\Big)|\\
				&\leq\frac{C e^{T}}{\sqrt{t}T^2}.
			\end{split}
		\end{align}
		While by \cite[Proposition 1.3]{bismut1995flat}, 
		\[\Tr_s(Nf'(\P^{\ker V}K^{\delta}_{T}\P^{\ker V}))=\Tr_s(Nf'(\P^{\ker V}K^{\H}_{T}\P^{\ker V}))=\Tr_s(Nf'(0)).\]
		Hence, when $t\in[1,\infty)$,
		\[|\Tr_s(Nf'(D^{\delta,T}_t))-\Tr_s(Nf'(D^{\H,T}_t))|\leq \frac{Ce^{T}}{\sqrt{t}T^2}.\]
		Although $\nabla^{\delta,T}$ is not flat, the argument in \cite[Proposition 1.3]{bismut1995flat} still works. Hence, we have
		\be\label{eq101}\Tr_s\left(Nf'(D^{\delta,T}_0)-Nf'(D^{\H,T}_0)\right)=0.\ee
		
		Moreover, by a straightforward computation,
		\begin{align}\begin{split}\label{eq99}
				\frac{\p}{\p t}\Tr_s\left(Nf'(D^{\delta,T}_t)-Nf'(D^{\H,T}_t)\right)=\Tr_s\left((d^{\bz}\delta_T^{\bz,*}-\delta_T^{\bz,*}d^{\bz})\big(\tf'((D^{\delta,T}_t)^2)-\tf'((D^{\H,T}_t)^2)\big)\right).
			\end{split}
		\end{align}
		Recall that $\tf(a)=(1+2a)e^a$, hence $\tf(a^2)=f'(a).$
		
		By Corollary \ref{smalleigen},
		\begin{align}\begin{split}\label{eq98}
				\|(d^{\bz}\delta_T^{\bz,*}-\delta_T^{\bz,*}d^{\bz})(\tf'((D^{\delta,T}_t)^2)-\tf'((D^{\H,T}_t)^2))\|\leq Ce^{-2T}.
			\end{split}
		\end{align}
		
		By (\ref{eq101}), (\ref{eq99}) and (\ref{eq98}),
		\[|\Tr_s\left(Nf'(D^{\delta,T}_t)-Nf'(D^{\H,T}_t)\right)|\leq C'e^{-2T}t.\]
		
	\end{proof}
	Let
	$$\begin{aligned} &\ \ \ \ \T_f\left(d^{\bz}+\nabla^{\delta,T}\right)\\
		&=-\int_0^{\infty}\left(\varphi \operatorname{Tr}_s\left(\frac{1}{2} N f^{\prime}\left(D^{\delta,T}_t\right)\right)-\frac{1}{2} \chi'(Z,F)-\frac{d(\Omega_{\sm}(\bm,\F)(T))-\chi'(Z,F)}{2} f^{\prime}\left(\frac{i \sqrt{t}}{2}\right)\right) \frac{d t}{t},\end{aligned}$$
	and 
	$$\begin{aligned} &\ \ \ \ \T_f\left(d^{\bz}+\nabla^{\H,T}\right)\\
		&=-\int_0^{\infty}\left(\varphi \operatorname{Tr}_s\left(\frac{1}{2} N f^{\prime}\left(D^{\H,T}_t\right)\right)-\frac{1}{2} \chi'(Z,F)-\frac{d(\Omega_{\sm}(\bm,\F)(T))-\chi'(Z,F)}{2} f^{\prime}\left(\frac{i \sqrt{t}}{2}\right)\right) \frac{d t}{t}.\end{aligned}$$
	
	\begin{cor}\label{last80}
		$\lim_{T\to\infty}\left(\T_f\left(d^{\bz}+\nabla^{\delta,T}\right)-\T_f\left(d^{\bz}+\nabla^{\H,T}\right)\right)=0.$
		That is,
		\[\lim_{T\to\infty}\left(\T_{\sm}(T^H\bm,g^{T\bz},h^{\F})(T)-\T_f\left(d^{\bz}+\nabla^{\H,T}\right)\right)=0.\]
	\end{cor}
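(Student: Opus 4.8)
The plan is to reduce the statement to an estimate already supplied by Corollary \ref{last70}. First I would record that $\T_{\sm}(T^H\bm,g^{T\bz},h^{\F})(T)=\T_f(d^{\bz}+\nabla^{\delta,T})$: by the definition of $f^{\wedge}_{\sm}$ together with the identity $f^{\wedge}(C_{t,T}',h^{\bE})=f^{\wedge}(\tilde C_{t,T}',h_T^{\bE})$ of \S\ref{witwei}, the integrand defining $\T_{\sm}$ equals $\varphi\Tr_s\big(\tfrac12 N f'(D^{\delta,T}_t)\big)$, while the normalizing terms $-\tfrac12\chi'(Z,F)$ and $-\tfrac{d(\Omega_{\sm}(\bm,\F)(T))-\chi'(Z,F)}{2}\,f'\big(\tfrac{i\sqrt t}{2}\big)$ are exactly those in $\T_f(d^{\bz}+\nabla^{\delta,T})$. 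Hence the two displayed assertions are equivalent, and it suffices to prove $\lim_{T\to\infty}\big(\T_f(d^{\bz}+\nabla^{\delta,T})-\T_f(d^{\bz}+\nabla^{\H,T})\big)=0$ in the $L^\infty$-topology on $\Omega(S)$.

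Next I would express this difference as a single integral. Both $\T_f(d^{\bz}+\nabla^{\delta,T})$ and $\T_f(d^{\bz}+\nabla^{\H,T})$ are attached to the same finite-dimensional complex $(\tO^{\bullet}_{\sm}(\bz,\F)(T),d^{\bz})$, with the same Hermitian metric induced by $g^{T\bz}$ and $h^{\F}_T$ and the same cohomology $\H^{\bullet}(\bz;\F)(T)\cong H^{\bullet}(\bz;\F)$ (so the same $d(\Omega_{\sm}(\bm,\F)(T))$ and the same $d(\H(\bz;\F)(T))=\chi'(Z,F)$); only the connection differs, the flat $\nabla^{\H,T}$ versus the non-flat $\nabla^{\delta,T}$. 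Therefore all normalizing terms cancel in the difference, and since Corollary \ref{last70} shows that $t^{-1}\big|\Tr_s\big(N(f'(D^{\delta,T}_t)-f'(D^{\H,T}_t))\big)\big|$ is bounded by $C'e^{-2T}$ on $(0,1]$ and by $Ce^{(1-\r)T}t^{-3/2}$ on $[1,\infty)$, hence is integrable on $(0,\infty)$, one obtains
\[
\T_f\big(d^{\bz}+\nabla^{\delta,T}\big)-\T_f\big(d^{\bz}+\nabla^{\H,T}\big)
=-\frac{1}{2}\int_0^{\infty}\varphi\Tr_s\Big(N\big(f'(D^{\delta,T}_t)-f'(D^{\H,T}_t)\big)\Big)\frac{dt}{t}.
\]

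The remaining, and only genuinely delicate, point is to bound the right-hand side uniformly in $\s$ and let $T\to\infty$. Splitting the integral at $t=1$ is not enough, because the large-time estimate of Corollary \ref{last70} then contributes a term of size $\sim e^{(1-\r)T}$, which diverges. Instead I would split at the moving threshold $T_0:=e^{(2-\r)T}$, where $\r\in(0,1)$ is the constant of Lemma \ref{last60} (so $T_0\ge1$ once $T$ is large). On $(0,T_0]$ the estimate $\big|\Tr_s(N(f'(D^{\delta,T}_t)-f'(D^{\H,T}_t)))\big|\le C'e^{-2T}t$ yields an $L^\infty$-contribution at most $C'e^{-2T}T_0=C'e^{-\r T}$; on $[T_0,\infty)$ the estimate $\big|\Tr_s(N(f'(D^{\delta,T}_t)-f'(D^{\H,T}_t)))\big|\le Ce^{(1-\r)T}t^{-1/2}$ yields an $L^\infty$-contribution at most $2Ce^{(1-\r)T}T_0^{-1/2}=2Ce^{-\r T/2}$. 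Since $C,C',\r$ are independent of $T$ and $\s$, both terms tend to $0$, which gives $\T_f(d^{\bz}+\nabla^{\delta,T})-\T_f(d^{\bz}+\nabla^{\H,T})\to0$ and, by the first paragraph, the corollary. The main obstacle is precisely this interpolation: one must play the $t$-growing small-time bound of Corollary \ref{last70} against its $t$-decaying large-time bound, which closes only because $2(1-\r)<2$; everything else is bookkeeping and the identification $\T_{\sm}=\T_f(d^{\bz}+\nabla^{\delta,T})$.
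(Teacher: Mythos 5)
Your proof is correct and is essentially the same as the paper's. The paper likewise splits the $\frac{dt}{t}$-integral at a moving exponential threshold $e^{(2-2\rho')T}$ with $\rho'\in(0,\rho)$ fixed, using the small-time bound $\lesssim e^{-2T}t$ of Corollary \ref{last70} below the threshold and the large-time bound $\lesssim e^{(1-\rho)T}t^{-1/2}$ above it; your choice $T_0=e^{(2-\rho)T}$ is precisely the case $\rho'=\rho/2$, and the initial bookkeeping identifying $\T_{\sm}(T^H\bm,g^{T\bz},h^{\F})(T)$ with $\T_f(d^{\bz}+\nabla^{\delta,T})$ is what the paper's phrasing ``That is\dots'' implicitly appeals to.
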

	\begin{proof}
		By the second inequality in Corollary \ref{last70},
		\be
		\int_{0}^{e^{2T}/T^2}| \operatorname{Tr}_s\left(\frac{1}{2} N f^{\prime}\left(D^{\H,T}_t\right)\right)- \operatorname{Tr}_s\left(\frac{1}{2} N f^{\prime}\left(D^{\delta,T}_t\right)\right)|\frac{dt}{t}\leq \frac{C}{T^2}.
		\ee
		By the first inequality in Corollary \ref{last70},
		\be
		\int_{e^{2T}/T^2}^\infty| \operatorname{Tr}_s\left(\frac{1}{2} N f^{\prime}\left(D^{\H,T}_t\right)\right)- \operatorname{Tr}_s\left(\frac{1}{2} N f^{\prime}\left(D^{\delta,T}_t\right)\right)|\frac{dt}{t}\leq \frac{C}{T}.
		\ee

	\end{proof}

	\begin{proof}[Proof of Theorem \ref{int6p}]Since $\nabla^{\H_2,T}\tp_{k,T}=\tp_{k,T}\nabla^{\H_1,T}$ and $\tp_{k,T}=\te_{k,T}^{-1}d^{\bz}\tir^{-1}_{k,T}$, one has $[\nabla^{\H,T},d^{\bz}]=0.$
		
		It follows from Proposition \ref{last4p1}, \cite[Lemma 2.2]{zhu2015gluing} and \cite[Theorem 7.37]{goette2001morse} that in $Q^{\mS}/Q^{\mS}_0$,
		$\lim_{T\to\infty}\T_f\left(d^{\bz}+\nabla^{\H,T}\right)-\T(T)=0.$
		The theorem then follows from Corollary \ref{last80}.

	\end{proof}
	
	\bibliography{lib}
	
	\bibliographystyle{plain}
	
\end{document}